\theoremstyle{plain}
\newtheorem{thm}{Theorem}[section]
\newtheorem{cor}[thm]{Corollary}
\newtheorem{prop}[thm]{Proposition}
\newtheorem{lem}[thm]{Lemma}
\newtheorem{claim}[thm]{Claim}
\newtheorem{quest}{Question}
\theoremstyle{definition}
\newtheorem{remark}[thm]{Remark}
\newcommand{\comment}[1]{}
\DeclareMathOperator{\tor}{tor}
\DeclareMathOperator{\interior}{int}
\newcommand{\N}{\ensuremath{\mathbb{N}}}
\newcommand{\R}{\ensuremath{\mathbb{R}}}
\newcommand{\Z}{\ensuremath{\mathbb{Z}}}
\renewcommand{\sl}{\ensuremath{{\, sl}}}
\DeclareMathOperator{\tb}{tb}
\DeclareMathOperator{\rot}{r}
\def\dfn#1{{\em #1}}
\title{On knots in overtwisted contact structures}
\author{John B. Etnyre}
\address{School of Mathematics \\ Georgia Institute of Technology}
\email{etnyre@math.gatech.edu}
\urladdr{\href{http://www.math.gatech.edu/~etnyre}{http://www.math.gatech.edu/\~{}etnyre}}
\begin{document}

\begin{abstract}
We prove that each overtwisted contact structure has knot types that are represented by infinitely many distinct transverse knots all with the same self-linking number. In some cases, we can even classify all such knots. We also show similar results for Legendrian knots and prove a ``folk'' result concerning loose transverse and Legendrian knots (that is knots with overtwisted complements) which says that such knots are determined by their classical invariants (up to contactomorphism). Finally we discuss how these results partially fill in our understanding of the ``geography" and ``botany'' problems for Legendrian knots in overtwisted contact structures, as well as many open questions regarding these problems. 
\end{abstract}

\maketitle

\section{Introduction}
Since Eliashberg's formative paper \cite{Eliashberg89} classifying overtwisted contact structures on 3--manifolds, the study of and interest in such structures has been minimal. However, in recent years they have been taking a more central role due to their many interesting applications --- such as, the construction of achiral Lefschetz fibrations \cite{EtnyreFuller06} and near symplectic structures \cite{GayKirby04} on certain 4--manifolds and the understanding of the existence of Engel structures on 4--manifolds \cite{Vogel09} --- as well as the interesting knot theory they support. This paper is aimed at studying the Legendrian and transverse knot theory of overtwisted contact structures. We begin with a brief history of the subject.

A Legendrian or transverse knot in an overtwisted contact structure $\xi$ on a 3--manifold $M$ is called \dfn{loose} if the contact structure restricted to its complement is also overtwisted, otherwise the knot is called \dfn{non-loose}. Though apparently known to a few experts, the first explicit example of a non-loose knot was given by Dymara in \cite{Dymara01}, where a single non-loose Legendrian unknot was constructed in a certain overtwisted structure on $S^3$. More recently, Eliashberg and Fraser \cite{EliashbergFraser09}  gave a coarse classification of Legendrian unknots in overtwisted contact structures on $S^3$, see Theorem~\ref{mainS3contacto} below. (We say knots are \dfn{coarsely classified} if they are classified up to co-orientation preserving contactomorphism, smoothly isotopic to the identity. We reserve the word \dfn{classified} to refer to the classification up to Legendrian isotopy, and similarly for transverse knots.) An immediate corollary of this work is that there are no non-loose transverse unknots in any overtwisted contact structure. 

In \cite{Etnyre08} it was shown that there are knot types and overtwisted contact structures for which there were arbitrarily many distinct non-loose Legendrian knots realizing that knot type with fixed Thurston-Bennequin invariant and rotation number. While it is easy to construct non-loose transverse knots in any overtwisted contact structure (one just observes, {\em cf.} \cite{EtnyreVela-Vick10}, that the complement of the binding of a supporting open book decomposition is tight), two non-loose transverse knots with the same self-linking numbers were first produced by Lisca, Ozsv\'ath, Stipsicz and Szab\'o, in \cite{LiscaOzsvathStipsiczSzabo09}, using Heegaard-Floer invariants of Legendrian and transverse knots.

There have been very few results concerning the classification of Legendrian or transverse knots in overtwisted contact structure (as opposed to the coarse classification), but there has been some work giving necessary conditions for the existence of a Legendrian isotopy, see for example \cite{DingGeiges09, Dymara??, EliashbergFraser09}.

Leaving the history of the subject for now we begin by recalling a version of the Bennequin bound for non-loose knots. This result first appeared in \cite{Dymara??} where it was attributed to \'Swi\c{a}tkowski.
\begin{prop}[\'Swi\c{a}tkowski, see \cite{Dymara??}]\label{looseLegbound}
Let $(M,\xi)$ be an overtwisted contact 3--manifold and $L$ a non-loose Legendrian knot in $\xi$. Then 
\[
-|\tb(L)|+|\rot(L)|\leq -\chi(\Sigma)
\]
for any Seifert surface $\Sigma$ for $L$.
\end{prop}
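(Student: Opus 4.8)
The plan is to use the only hypothesis in sight, namely that the complement $N:=M\setminus\Int\nu(L)$ is tight, and to run Kanda's convex--surface proof of the Bennequin inequality inside $N$ rather than inside $M$, keeping careful track of the torus boundary. First I would take $\Sigma':=\Sigma\cap N$, a properly embedded surface diffeomorphic to $\Sigma$ with $\partial\Sigma'$ a Seifert longitude $\lambda$ of $L$ sitting on $T:=\partial N$. After a small isotopy I may assume $T$ is convex; since $\nu(L)$ is a standard neighborhood of a Legendrian knot, its dividing set $\Gamma_T$ consists of two parallel essential curves whose slope is the contact framing of $L$, so $\lambda$ meets $\Gamma_T$ in exactly $2|\tb(L)|$ points. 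This is the step at which $\tb(L)$ enters. Perturbing $\Sigma'$ rel boundary I may also assume it is convex, and then its dividing set $\Gamma_{\Sigma'}$ is a properly embedded $1$--manifold whose $2|\tb(L)|$ endpoints are the points of $\lambda\cap\Gamma_T$; hence $\Gamma_{\Sigma'}$ consists of $|\tb(L)|$ arcs together with some closed curves, so $\chi(\Gamma_{\Sigma'})=|\tb(L)|$.

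Write $\Sigma'=R_+\cup R_-$ for the decomposition into positive and negative regions, meeting along $\Gamma_{\Sigma'}$. Inclusion--exclusion gives $\chi(R_+)+\chi(R_-)=\chi(\Sigma')+\chi(\Gamma_{\Sigma'})=\chi(\Sigma)+|\tb(L)|$. Next I would identify $\rot(L)$ with $\pm(\chi(R_+)-\chi(R_-))$: this is Kanda's formula when $L$ is realized as the Legendrian boundary of a convex Seifert surface, and one transfers it to the present situation by gluing a convex annulus $A\subset\nu(L)$ cobounded by $L$ and $\lambda$ and using additivity of the relative Euler class, with the section of $\xi|_L$ taken to be $TL$. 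Given these two identities, the Proposition becomes equivalent to the pair of inequalities $\chi(R_\pm)\le|\tb(L)|$: indeed they force $\chi(R_\pm)\in[\chi(\Sigma),|\tb(L)|]$, whence $|\rot(L)|=|\chi(R_+)-\chi(R_-)|\le|\tb(L)|-\chi(\Sigma)$, which is the claim.

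The heart of the argument is this last pair of inequalities, and it is here that tightness of $N$ is used. Assuming $\Sigma'$ is neither a sphere nor a disk, Giroux's criterion applied to the tight manifold $N$ tells us $\Gamma_{\Sigma'}$ has no homotopically trivial closed components. Now if $D$ is a disk component of, say, $R_+$, then $\partial D$ is a single circle built from arcs of $\Gamma_{\Sigma'}$ and arcs of $\partial\Sigma'$; if $\partial D$ contained no arc of $\partial\Sigma'$ it would be a closed dividing curve bounding $D\subset\Sigma'$, hence null-homotopic, contradicting Giroux. So every disk component of $R_+$ contains at least one of the $|\tb(L)|$ sub-arcs into which the points $\lambda\cap\Gamma_T$ cut the positive part of $\partial\Sigma'$, and distinct disk components use distinct such sub-arcs; since non-disk components of $R_+$ have non-positive Euler characteristic, $\chi(R_+)\le|\tb(L)|$, and likewise $\chi(R_-)\le|\tb(L)|$.

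Two points remain and are where I expect the real work to be. First, the case where $\Sigma$ is a disk, i.e. $L$ is a Legendrian unknot, is not covered by the Giroux step: here, when $\tb(L)=0$ the convex disk $\Sigma'$ would have empty dividing set, which is impossible, so no such non-loose knot exists, and when $\tb(L)\ne0$ the arc count on the disk gives the bound directly; alternatively one simply invokes the Eliashberg--Fraser classification. Second, and more delicately, identifying the relative Euler class of $\xi|_N$ over $\Sigma'$ with $\rot(L)$ is subtle precisely when $\tb(L)\ge0$, since then $\Sigma$ cannot be made convex with Legendrian boundary inside $M$ and all the bookkeeping must be done on $T$; controlling the winding of the transported section $TL$ along $\lambda$ against the framing coming from $T$ is the main technical obstacle, and pinning down that constant is what produces $|\tb(L)|$ — rather than $-\tb(L)$ — in the final bound.
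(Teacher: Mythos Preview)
Your argument is correct, but it is a longer road to the same destination than the one the paper takes. The paper does not re-run Kanda's convex-surface proof inside the complement; instead it observes that your curve $\lambda$ --- realized as a Legendrian ruling curve $L'$ of slope $0$ on the convex torus $\partial\nu(L)$ --- is itself a null-homologous Legendrian knot in the \emph{tight} manifold $M\setminus\nu(L)$, bounding (a push-in of) $\Sigma$ there. One then simply applies the ordinary Bennequin inequality to $L'$. The two ingredients are exactly the ones you isolate: $\tb(L')=-|\tb(L)|$, since the torus framing and the Seifert framing of $L'$ agree while the twisting of $L'$ relative to the torus is $-\tfrac12\,|\lambda\cap\Gamma_T|=-|\tb(L)|$; and $\rot(L')=\rot(L)$. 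The latter is precisely the step you flag as the ``main technical obstacle'', but it has a clean two-line proof independent of the sign of $\tb(L)$: trivialize $\xi$ over $\nu(L)$ by extending the unit tangent field to $L$, and observe that any Legendrian longitude on $\partial\nu(L)$ oriented coherently with $L$ has winding zero in this trivialization, hence the same rotation number as $L$. No annulus gluing or sign bookkeeping is needed.

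So what you do differently is unfold the Bennequin inequality for $L'$ rather than quote it; your $R_\pm$ computation and the Giroux-criterion bound $\chi(R_\pm)\le|\tb(L)|$ are exactly Kanda's proof applied to $L'$ (note that $\chi(R_+)-\chi(R_-)$ computes $\rot(\lambda)=\rot(L')$, not $\rot(L)$ directly, so you cannot avoid the identification $\rot(L')=\rot(L)$ in any case). The payoff of the paper's packaging is brevity and the elimination of the case analysis you mention for the unknot: once you cite Bennequin for $L'$, no separate treatment of disks is required. The payoff of your packaging is self-containment, and it makes transparent exactly where the absolute value on $\tb$ enters --- through the geometric intersection $|\lambda\cap\Gamma_T|$ rather than the algebraic one.
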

We sketch a simple proof of this result below.  We now observe a relation between non-loose transverse knots and their Legendrian approximations as well as non-loose Legendrian knots and their transverse push-offs.
\begin{prop}\label{nltnll}
If $T$ is a non-loose transverse knot then any Legendrian approximation of $T$ is non-loose.  
If $L$ is a non-loose Legendrian knot then the transverse push-off of $L$ may or may not be non-loose. 
\end{prop}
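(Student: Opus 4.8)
The plan is to prove the first statement by a neighborhood/containment argument and to settle the second by exhibiting one example of each behavior. For the first statement, I would begin by recording that whether a knot is loose depends only on its contactomorphism class, and in particular is invariant under transverse (or Legendrian) isotopy, since such an isotopy extends to an ambient contact isotopy and therefore restricts to a contactomorphism of the complements. Now let $L$ be a Legendrian approximation of the transverse knot $T$, so that $T$ is transversely isotopic to a transverse push-off of $L$, and recall that such a push-off $T'$ can be taken inside an arbitrarily small standard neighborhood $\nu(L)$. Then $M\setminus\nu(L)\subset M\setminus T'$, while $M\setminus T'$ is contactomorphic to $M\setminus T$. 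Hence if $L$ were loose, then $M\setminus\nu(L)$ would be overtwisted, and an overtwisted disk in it would lie inside $M\setminus T'\cong M\setminus T$, so $T$ would be loose; the contrapositive is the claim, and it applies to \emph{every} Legendrian approximation of $T$ since each of them has $T$ as a transverse push-off. Note that this argument does not yield ``$L$ non-loose $\Rightarrow$ transverse push-off non-loose'': the complement of a small neighborhood of $L$ is the \emph{smaller} of the two complements, so an overtwisted disk living near the push-off of $L$ has no reason to survive into $M\setminus L$ — this asymmetry is exactly why the second statement can go either way.

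For the second statement, the ``non-loose'' case is immediate from the first: non-loose transverse knots exist (for instance the binding of a supporting open book has tight complement, \emph{cf.}~\cite{EtnyreVela-Vick10}, and \cite{LiscaOzsvathStipsiczSzabo09} even produces such knots with prescribed self-linking number), and any such $T$ is a transverse push-off of each of its Legendrian approximations, all of which are non-loose by the first statement. For the ``loose'' case I would take the non-loose Legendrian unknot of Dymara \cite{Dymara01} (or any non-loose Legendrian unknot in the Eliashberg--Fraser classification \cite{EliashbergFraser09}); its transverse push-off is a transverse unknot, and since, as recalled in the introduction, the Eliashberg--Fraser classification implies there are no non-loose transverse unknots in any overtwisted contact structure, this push-off is loose. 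Concretely, the self-linking number of the push-off equals $\tb-\rot$ of the Legendrian unknot, which exceeds the bound $-\chi(D^2)=-1$; that such a transverse unknot must be loose can also be seen directly from a transverse version of Proposition~\ref{looseLegbound}, obtained by negatively stabilizing a Legendrian approximation until $\tb$ and $\rot$ are both negative and invoking the first statement to retain non-looseness.

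I do not expect a genuinely hard step here: the first statement is the standard ``smaller complement'' observation, and the second is bookkeeping with results already in the literature. The two points that require care are keeping the direction of the inclusion $M\setminus\nu(L)\subset M\setminus T'$ straight in the first statement, so as not to over-claim — this is precisely what distinguishes the two halves of the proposition — and, in the transverse-bound digression, checking that a negatively stabilized Legendrian approximation of a non-loose transverse knot remains non-loose, which is again just the first statement.
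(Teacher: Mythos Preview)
Your proof is correct and follows essentially the same approach as the paper: the first statement via the inclusion $M\setminus\nu(L)\subset M\setminus T'$ together with the ambient isotopy extension, and the ``loose'' half of the second statement via non-loose Legendrian unknots and the fact that transverse unknots are always loose. The one place you differ is the ``non-loose'' half of the second statement: the paper forward-references the proof of Theorem~\ref{thm:lclassify} to exhibit specific non-loose Legendrians with non-loose push-offs, whereas you observe more directly that any non-loose transverse knot $T$ (e.g.\ a binding) has non-loose Legendrian approximations by the first statement, and these approximations have $T$ itself as push-off---this is self-contained and avoids the forward reference, so it is arguably cleaner.
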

The previous two results imply that a version of the Bennequin bound for transverse knots in a tight contact structure also holds for non-loose transverse knots. This is in stark contrast to what happens for Legendrian knots as Proposition~\ref{looseLegbound} indicates and Eliashberg and Fraser's coarse classification of unknots, given in Theorem~\ref{mainS3contacto} below, confirms. 

\begin{prop}\label{prop:nlsl}
Let $(M,\xi)$ be a contact 3--manifold and $K$ a transverse knot in $\xi$ with Seifert surface $\Sigma$. If  
\[
\sl(K)>-\chi(\Sigma)
\]
then $K$ is loose (and, of course, $\xi$ is overtwisted). In particular any non-loose knot $K$ in an overtwisted contact structure satisfies the Bennequin inequality
\[
\sl(K)\leq -\chi(\Sigma).
\]
\end{prop}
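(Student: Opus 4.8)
The plan is to deduce this formally from Propositions~\ref{looseLegbound} and~\ref{nltnll} by passing to a Legendrian approximation and stabilizing it until the classical invariants have convenient signs. Suppose $K$ is transverse with Seifert surface $\Sigma$ and $\sl(K)>-\chi(\Sigma)$. First I would choose a Legendrian approximation $L_0$ of $K$ (so $K$ is the positive transverse push-off of $L_0$), recording the identity $\sl(K)=\tb(L_0)-\rot(L_0)$ and the fact that, since $L_0$ is smoothly isotopic to $K$, it bounds a Seifert surface diffeomorphic to $\Sigma$. Then I would negatively stabilize: put $L=S_-^{N}(L_0)$ for $N$ large, so that $\tb(L)=\tb(L_0)-N$ and $\rot(L)=\rot(L_0)-N$ while $\tb(L)-\rot(L)=\sl(K)$ is unchanged and $L$ is still a Legendrian approximation of $K$ (the positive transverse push-off is unaffected by negative stabilization). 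For $N\gg 0$ we have $\tb(L)<0$ and $\rot(L)<0$, hence
\[
-|\tb(L)|+|\rot(L)|=\tb(L)-\rot(L)=\sl(K)>-\chi(\Sigma).
\]
Thus $L$ violates the inequality of Proposition~\ref{looseLegbound}, so $L$ must be loose; Proposition~\ref{nltnll} then forces $K$ to be loose, and in particular $\xi|_{M\setminus K}$, hence $\xi$ itself, is overtwisted. The ``in particular'' sentence is just the contrapositive.

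To summarize the order of steps: (1) produce the Legendrian approximation $L_0$ and the identity $\sl(K)=\tb(L_0)-\rot(L_0)$, together with the diffeomorphism type of a Seifert surface for $L_0$; (2) negatively stabilize $N$ times, using that $S_-$ lowers both $\tb$ and $\rot$ by $1$ and so preserves $\tb-\rot$, and that it does not change the positive transverse push-off; (3) observe that once $\tb(L),\rot(L)<0$ the absolute values collapse so that $-|\tb(L)|+|\rot(L)|=\sl(K)$; (4) apply the contrapositive of Proposition~\ref{looseLegbound} to conclude $L$ is loose; (5) push this back to $K$ via Proposition~\ref{nltnll}, and read off that $\xi$ is overtwisted.

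There is no deep obstacle here --- all the content lives in the two cited propositions --- but the point requiring care is the bookkeeping of orientation and sign conventions: one must fix once and for all the conventions for ``Legendrian approximation'' versus transverse push-off, for which combination of $\tb$ and $\rot$ equals $\sl$, and for the effect of $S_-$ on $\rot$, so that after enough stabilizations the two absolute values collapse in the \emph{same} direction and reproduce $\sl(K)$ rather than something smaller. (With the opposite convention $\sl(K)=\tb(L_0)+\rot(L_0)$ one stabilizes positively instead and the signs of $\tb$ and $\rot$ swap roles, with the identical conclusion.) It is also worth noting explicitly that, although the Bennequin inequality in the statement is phrased for $K$ and $\Sigma$ sitting in $M$, the Legendrian bound being invoked is the one valid in the possibly overtwisted ambient manifold (Proposition~\ref{looseLegbound}), which is exactly why no tightness hypothesis beyond ``non-loose'' is needed.
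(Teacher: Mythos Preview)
Your proposal is correct and follows essentially the same approach as the paper: take a Legendrian approximation, negatively stabilize until the Thurston--Bennequin invariant is negative so that Proposition~\ref{looseLegbound} is violated, then invoke (the contrapositive of) Proposition~\ref{nltnll}. The only cosmetic difference is that the paper uses the reformulation of Proposition~\ref{looseLegbound} valid for $\tb(L)\leq 0$ (namely $\tb(L)\pm\rot(L)\leq -\chi(\Sigma)$) and so does not bother to also arrange $\rot(L)<0$, whereas you stabilize further to make both signs definite; this is harmless extra bookkeeping, not a different argument.
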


\subsection{
The coarse classification of loose Legendrian and transverse knots}

The following two theorems make precise the well-known ``folk'' theorems that loose Legendrian or transverse knots are coarsely  classified by their classical invariants. 
\begin{thm}\label{thm:legloose}
Let $(M,\xi)$ be an overtwisted contact manifold. For each null-homologous knot type $\mathcal{K}$ and each pair of integers $(t,r)$ satisfying $t+r$ is odd, there is a unique, up to contactomorphism,  loose Legendrian knot $L$ in the knot type $\mathcal{K}$  with $\tb(L)=t$ and $\rot(L)=r$.
\end{thm}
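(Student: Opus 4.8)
The plan is to deduce both existence and uniqueness from the relative form of Eliashberg's classification of overtwisted contact structures \cite{Eliashberg89}: if $W$ is a compact $3$--manifold with boundary and $\eta_0$ is a contact germ on a neighborhood of $\partial W$ that extends over $W$ to an overtwisted contact structure, then (i) every plane field on $W$ agreeing with $\eta_0$ near $\partial W$ is homotopic rel $\partial W$ to an overtwisted contact structure, and (ii) two such overtwisted contact structures are isotopic rel $\partial W$ if and only if they are homotopic rel $\partial W$ as plane fields. We will apply this with $W = M \setminus \Int N$ the complement of a standard neighborhood $N$ of the knot, so that the isotopy rel $\partial W$, extended by the identity over $N$, becomes an ambient isotopy of $M$ smoothly isotopic to $\Id$.

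For existence, fix a smooth knot $K_0$ of type $\K$ with tubular neighborhood $N_0 \cong S^1 \times D^2$, let $W_0 = M \setminus \Int N_0$, and put on $N_0$ the standard contact germ for which $K_0$ is Legendrian with contact framing $t$ (this depends only on $t$ and an orientation). The freedom in extending this germ to a plane field on $M$, taken rel $N_0$, is an affine space over $H^2(W_0, \partial W_0;\Z) \cong H_1(W_0) \cong H_1(M;\Z) \oplus \Z$, where the last summand is dual to a Seifert surface and records the rotation number of $K_0$; a short obstruction-theoretic check shows that the parity condition ``$t+r$ odd'' is exactly what is needed for this $\Z$-coordinate to be set equal to $r$, and the $H_1(M;\Z)$-freedom, together with the secondary ($d_3$-type) freedom, is enough to simultaneously arrange that the resulting plane field on $M$ is homotopic to $\xi$. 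By part (i) of the relative theorem this plane field is homotopic rel $N_0$ to an overtwisted contact structure $\eta$ on $M$, and by the closed classification $\eta = \psi_* \xi$ for some isotopy $\psi$ of $M$; then $L := \psi^{-1}(K_0)$ is a loose Legendrian knot of type $\K$ in $(M,\xi)$ with $\tb(L) = t$ and $\rot(L) = r$.

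For uniqueness, let $L_0, L_1$ be loose Legendrian knots of type $\K$ with $\tb(L_i)=t$ and $\rot(L_i)=r$. Since they are smoothly isotopic and have equal contact framings, the contact neighborhood theorem for Legendrian knots supplies a diffeomorphism $f$ of $M$, smoothly isotopic to $\Id$, with $f(L_0)=L_1$ and $f$ restricting to a contactomorphism from a standard neighborhood $N_0$ of $L_0$ onto a standard neighborhood $N_1$ of $L_1$. Set $W = M \setminus \Int N_1$. Then $\xi$ and $f_*\xi$ agree on $N_1$, and both are overtwisted on $\Int W$: for $\xi$ because $L_1$ is loose, for $f_*\xi$ because $L_0$ is loose (an overtwisted disk in $M \setminus L_0$ may be pushed off $N_0$ and then carried over by $f$). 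By part (ii) it remains to see that $\xi$ and $f_*\xi$ are homotopic rel $\partial W$ as plane fields. They are homotopic on all of $M$ since $f \simeq \Id$; the obstruction to improving this to a homotopy rel $N_1$ is primary in $H^2(W,\partial W;\Z)$, and on the summand dual to a Seifert surface its value is the difference $\rot_{\xi}(L_1) - \rot_{f_*\xi}(L_1)$, which vanishes because $\rot_{f_*\xi}(L_1) = \rot_{\xi}(L_0) = r$, while the remaining (homological and secondary) components are killed using that the two fields are already homotopic on $M$ and agree on $N_1$. Hence they are isotopic rel $\partial W$; extending by the identity over $N_1$ and composing with $f$ produces a contactomorphism of $(M,\xi)$, smoothly isotopic to $\Id$, taking $L_0$ to $L_1$.

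The main obstacle is precisely the obstruction-theoretic assertion that the triple $(\K, t, r)$ pins down the homotopy class, rel a standard neighborhood, of a plane field on $M$ that is globally fixed up to homotopy --- in particular controlling the secondary invariant in the relative setting --- together with a careful statement and application of the relative version of Eliashberg's theorem (including the point that it is the complement of the \emph{knot}, not of its tubular neighborhood, that is literally overtwisted, so that an overtwisted disk must first be isotoped off the neighborhood). The remaining ingredients --- the Legendrian neighborhood theorem, the parity/framing bookkeeping, and the closed case of Eliashberg's classification --- are standard.
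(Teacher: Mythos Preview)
Your uniqueness argument is essentially the paper's: both reduce to Eliashberg's classification applied to the knot complement, after showing that the restriction of $\xi$ to $M\setminus N$ is determined up to homotopy rel boundary by $(\tb,\rot)$. The paper packages this homotopy statement as a separate lemma (Lemma~\ref{tbrdet}) and proves it via the relative Pontryagin--Thom construction rather than abstract obstruction theory, but the content is the same, and your identification of this as the main technical point is exactly right.

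Your existence argument, however, is genuinely different. You build the desired plane field abstractly---fixing the standard germ on $N_0$, then using the $H^2(W_0,\partial W_0)$ and secondary freedom to adjust the rotation number and the global homotopy class simultaneously---and then invoke the relative Eliashberg theorem to realize it as an overtwisted contact structure. The paper instead gives a direct geometric construction entirely inside the given $(M,\xi)$: start with any Legendrian approximation of a smooth representative disjoint from an overtwisted disk $D$, and then repeatedly connect-sum with the Legendrian boundary of $D$ (with either orientation), which has $\tb=0$ and $\rot=\pm 1$, to raise $\tb$ by one while shifting $\rot$ by $\pm 1$; combined with ordinary stabilization this realizes every pair $(t,r)$ with $t+r$ odd. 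The advantage of the paper's route is that it is elementary and sidesteps the obstruction theory entirely for the existence half, so the delicate homotopy analysis is needed only once (for uniqueness). The advantage of yours is uniformity: the same machinery handles both halves, and you never need to exhibit an explicit Legendrian. Your approach does place more weight on the claim that the $H_1(M;\Z)$ and $d_3$ freedoms can be adjusted independently of the rotation-number coordinate so as to land in the homotopy class of $\xi$; this is true, but it is precisely the computation the paper carries out in detail, so you should expect to do comparable work there.
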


\noindent
Recall that for any Legendrian knots $L$ we must have $\tb(L)+\rot(L)$ odd, so the above theorem says any possible pair of integers is realized by a unique loose Legendrian knot in any overtwisted contact structure. For transverse knots we have the following result. 

\begin{thm}\label{thm:transverseloose}
Let $(M,\xi)$ be an overtwisted contact manifold. For each null-homologous knot type $\mathcal{K}$ and each odd integer $s$ there is a unique, up to contactomorphism,  loose transverse knot $T$ in the knot type $\mathcal{K}$ with $\sl(T)=s$.
\end{thm}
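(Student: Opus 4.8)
The plan is to deduce the transverse statement from the Legendrian one, Theorem~\ref{thm:legloose}, using the standard correspondence between transverse knots and Legendrian knots up to negative stabilization. First I would establish existence: given a null-homologous knot type $\mathcal{K}$ and an odd integer $s$, pick any Legendrian representative $L_0$ of $\mathcal{K}$ in some Darboux ball (so $L_0$ is loose, since its complement contains the overtwisted disk), then stabilize it negatively and positively enough times to arrange that its transverse push-off $T$ has $\sl(T) = \tb(L_0) - \rot(L_0) = s$; this is possible because negative stabilization drops $\tb$ by one and $\rot$ by one, leaving $\sl$ of the positive transverse push-off unchanged, while a single positive stabilization changes $\sl$ by $-2$, and $s$ is odd — matching the parity of $\tb - \rot$ for any Legendrian knot. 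The resulting $T$ is loose because a Legendrian approximation of it is loose (its complement contains the overtwisted disk we started with), hence its complement is overtwisted.

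For uniqueness, suppose $T_0$ and $T_1$ are two loose transverse knots in $\mathcal{K}$ with $\sl(T_0) = \sl(T_1) = s$. I would take Legendrian approximations $L_0, L_1$ of $T_0, T_1$ respectively; these are loose by Proposition~\ref{nltnll} applied in the loose case (a Legendrian approximation of a loose transverse knot is loose, since the transverse knot sits in a standard neighborhood of its approximation, so the approximation has overtwisted complement too). Now $L_0$ and $L_1$ have $\tb - \rot = s$, but their individual $\tb$ and $\rot$ may differ. The key step is to negatively stabilize $L_0$ and $L_1$ a common number of times until they have equal $\tb$ and equal $\rot$; since negative stabilization preserves $\tb - \rot$ and both start with the same value $s$, and since negative stabilization lowers $\tb$ by one, after enough negative stabilizations I can match both invariants (choosing the number of stabilizations large enough to absorb the discrepancy in $\rot$, which has the same parity for both). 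Negative stabilizations of loose knots are loose. By Theorem~\ref{thm:legloose}, the resulting stabilized Legendrian knots are contactomorphic. Finally, since the transverse push-off of a Legendrian knot and the Legendrian approximation are inverse operations up to the stabilization equivalence, and since $T_i$ is the transverse push-off of a negative stabilization of $L_i$, the contactomorphism between the stabilized Legendrians induces a contactomorphism taking $T_0$ to $T_1$.

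The main obstacle I expect is the bookkeeping in the uniqueness argument: one must be careful that the transverse isotopy class is genuinely recovered from the Legendrian data. Specifically, one needs the fact — valid in any contact 3-manifold — that two transverse knots are transversely isotopic if and only if their Legendrian approximations become Legendrian isotopic after some number of negative stabilizations, and one needs to check this descends correctly to the ``up to contactomorphism'' setting and interacts well with the parity constraints. I would also want to double-check the existence construction handles the case where $\mathcal{K}$ forces particular congruences, but since we work up to contactomorphism and in an overtwisted structure there are no such obstructions: every pair $(t,r)$ with $t+r$ odd is realized, so every odd $s$ is realized as $t - r$. The contactomorphism-versus-isotopy distinction is exactly what makes both theorems clean, and I would lean on Theorem~\ref{thm:legloose} as a black box rather than reprove any of the underlying Eliashberg-type uniqueness.
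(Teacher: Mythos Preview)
Your approach is essentially the paper's: take loose Legendrian approximations, match their invariants by negative stabilization, invoke Theorem~\ref{thm:legloose}, and push off. Two points need tightening.

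First, your existence argument is incomplete. Positive stabilization changes $\tb-\rot$ by $-2$ and negative stabilization leaves it fixed, so by stabilizing alone you can only \emph{decrease} the self-linking of the push-off; you cannot reach odd $s$ larger than your initial $\tb(L_0)-\rot(L_0)$. (Also, a general knot type $\mathcal{K}$ in $M$ need not fit in a Darboux ball, so that phrasing does not make sense as written.) The fix is to cite Theorem~\ref{thm:legloose} directly: it already produces a loose Legendrian with any pair $(t,r)$ of the correct parity, so take one with $t-r=s$ and push off.

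Second, the justification that the Legendrian approximations $L_i$ are loose is not quite right. Proposition~\ref{nltnll} says a non-loose transverse knot has non-loose approximations; its contrapositive does \emph{not} give ``loose transverse implies every Legendrian approximation is loose''. Your parenthetical (``$T$ sits in a standard neighborhood of $L$, so\ldots'') does not work either: knowing $M\setminus T$ is overtwisted and $T\subset N(L)$ does not force the overtwisted disk to miss $L$. The correct statement---and what the paper uses---is that a Legendrian approximation can be \emph{chosen} arbitrarily close to $T$, hence disjoint from a fixed overtwisted disk in $M\setminus T$; with that choice $L_i$ is loose, and the rest of your argument goes through.
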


\noindent
Again recall that the self-linking number of any transverse knot must be odd and thus the theorem says that any possible integer is realized by a unique loose transverse knot in an overtwisted contact structure. 

These two theorems follow directly from Eliashberg's classification of overtwisted contact structures and a careful analysis of homotopy classes of plane fields on manifolds with boundary, which we give in Section~\ref{sec:loose}. Theorem~\ref{thm:legloose} also appears in \cite{EliashbergFraser09} though the details of the homotopy theory were not discussed, and while these details are fairly straight forward they do not seem obvious to the author. In particular, when studying the homotopy classes of plane fields there is both a 2 and 3-dimensional obstruction to being able to homotope one plane field to another. The fact that the 3-dimensional obstruct is determined by the Thurston-Bennequin invariant and rotation number seems, at first, a little surprising until one carefully compares the  Pontryagin--Thom construction with a relative version of the Pontryagin--Thom construction. Geiges and independently, Klukas, in private communication, informed the author of another way to deal the the homotopy issues and prove the above theorems. Similar theorems, with extra hypotheses, concerning Legendrian isotopy were proven in \cite{DingGeiges09, Dymara??}.


In \cite{Tchernov03} Chernov defined relative versions of the rotation number for all (not necessarily null-homologous) Legendrian knots as long as the ambient manifold is irreducible and atoroidal or the Euler class of the contact structure is a torsion class (or the contact structure is tight). In~\cite{Chernov05, Tchernov03} he defined relative versions of the self linking invariant (not to be confused with the self-lining number of a transverse knot) for all (not necessarily null-homologous) framed knots in atoroidal and other manifolds. This gives the definition of the relative Thurston-Bennequin number for Legendrian knots in contact manifolds of such topological type. There should be theorems analogous to those above for non-null-homologous knots in this situation, but the precise statements would necessarily be more complicated. It would be interesting to see the precise extension of the above theorems to all non-null-homologous knots in all overtwisted contact manifolds. 
See \cite{DingGeiges09, Dymara??} for partial results along these lines. 

\subsection{Non-loose transverse knots and the coarse classification of transverse fibered knots realizing the Bennequin bound}

In \cite{ColinGirouxHonda09}, Colin, Giroux and Honda proved that if one fixes a knot type in an atoriodal 3--manifold and a tight contact structure on the manifold then there are only finitely many Legendrian knots with given Thurston-Bennequin invariant and rotation number.  
While this does not imply the same finiteness result for transverse knots, it seems likely that such a finiteness result is true for such knots too. 
Surprisingly this finiteness result is far from true in an overtwisted contact structure. We begin with some notation and terminology.
We call the open book for $S^3$ with binding the unknot the \dfn{trivial open book decomposition} and say an open book decomposition is \dfn{non-trivial} if it is not diffeomorphic to the trivial open book decomposition. 

If $(M,\xi)$ is a contact 3--manifold and $\mathcal{K}$ is a topological knot type we denote by $\mathcal{T}(\mathcal{K})$ the set of all transverse knots in the knot type $\mathcal{K}$ up to contactomorphism (co-orientation preserving and smoothly isotopic to the identity). (We note that in some contexts, one might want this to denote the transverse knots up to transverse isotopy, but in this paper we will only consider the coarse classification of knots.) If $n$ is an integer then 
\[
\mathcal{T}_n(\mathcal{K})=\{T\in \mathcal{T}(\mathcal{K}): \sl(T)=n\}
\]
is the set of transverse knots in the knot type $\mathcal{K}$ with self-linking number $n$. 

Given a null-homologous knot $K$ we will denote the maximal Euler characteristic for a Seifert surface for $K$ by $\chi(K)$.

\begin{thm}\label{thm:infinite1}
Let $(B,\pi)$ be a non-trivial open book decomposition with connected binding of a closed 3--manifold $M$ and let $\xi_B$ be the contact structure it supports. 
Denote by $\xi$ the contact structure obtained from $\xi_B$ by a full Lutz twist along $B$.  
Then $\mathcal{T}_{-\chi(B)}(B)$ contains infinitely many distinct non-loose transverse knots up to contactomorphism (and hence isotopy too).
\end{thm}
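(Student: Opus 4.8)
The plan is to produce the infinitely many transverse knots by iterating the Lutz twist, and to tell them apart by the amount of Giroux torsion carried by their complements. Write $\Sigma$ for a page of $(B,\pi)$ and $\varphi$ for its monodromy, so that $M\setminus N(B)$ is the mapping torus of $\varphi$; by the fact recalled in the introduction (\emph{cf.} \cite{EtnyreVela-Vick10}) the restriction of $\xi_B$ to this mapping torus is tight, and $\sl(B)=-\chi(\Sigma)=-\chi(B)$ since a page is a minimal genus Seifert surface. The facts about a full Lutz twist I would use are: it is supported in an arbitrarily small neighborhood of the knot it is performed along; it leaves the homotopy class of the underlying co-oriented plane field unchanged while making the ambient contact structure overtwisted (an overtwisted disk sits on a meridian disk of the Lutz tube); and the new ``core'' transverse knot it produces is smoothly isotopic to the old one and has the same self-linking number, because the contact structure is standard near the new core.

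First I would let $\xi_n$ denote the result of $n$ successive full Lutz twists along $B$, each one performed along the core produced by the previous twist, so that $\xi_1=\xi$, and let $B_n\subset(M,\xi_n)$ be the final core; thus $B_n$ is smoothly isotopic to $B$ and $\sl(B_n)=-\chi(B)$. Since each full Lutz twist preserves the homotopy class of the co-oriented plane field and every $\xi_n$ with $n\ge1$ is overtwisted, Eliashberg's classification \cite{Eliashberg89} provides a contactomorphism $(M,\xi_n)\to(M,\xi)$ smoothly isotopic to the identity; let $\widehat B_n\subset(M,\xi)$ be the image of $B_n$. Then $\widehat B_n\in\mathcal{T}_{-\chi(B)}(B)$ for every $n\ge 1$.

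Next I would check that each $\widehat B_n$ is non-loose and that the $\widehat B_n$ are pairwise non-contactomorphic. After removing a standard neighborhood of the core, the complement of $B_n$ in $(M,\xi_n)$ is the tight mapping torus of $\varphi$ with $n$ copies of ``a full Lutz tube with its core removed'' stacked onto the boundary torus; each such piece is a $T^2\times[0,1]$ carrying $2\pi$ of rotation of the contact planes, i.e.\ a Giroux torsion domain, and by the rel-boundary nature of the Lutz twist the characteristic foliations on all the gluing tori match, so this complement is exactly the tight mapping torus with $n$ Giroux torsion domains inserted near its incompressible convex boundary; a standard gluing argument for tight contact structures along convex tori then shows it is tight, so $B_n$, and hence $\widehat B_n$, is non-loose. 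For distinctness, a contactomorphism of pairs $(M,\xi,\widehat B_m)\to(M,\xi,\widehat B_n)$ restricts to one of the complements, and the Giroux torsion of $(M\setminus N(\widehat B_n),\xi)$ is at least $n$ (this is where non-triviality of the open book enters: it forces $\bdry\Sigma$, hence the relevant family of parallel tori, to be incompressible, so the inserted torsion cannot be undone); since the torsion of the original complement is finite, this torsion is unbounded in $n$ and only finitely many of the complements can be mutually contactomorphic.

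The main obstacle I expect is the combination of the tightness/gluing step and the torsion estimate: one has to verify carefully that inserting the Lutz-twist annuli into the tight mapping torus creates no overtwisted disk, so that the $\widehat B_n$ really are non-loose, and that the Giroux torsion of the complements really is unbounded, so that they really are distinct; non-triviality of the open book is what makes the boundary incompressible and is therefore essential at both places. Everything else is bookkeeping combining the local model of the Lutz twist, Eliashberg's theorem, and the structure of the complement of a fibered knot.
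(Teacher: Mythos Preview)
Your outline matches the paper's strategy almost exactly: construct $B_n$ as the core after $n$ full Lutz twists, identify all the ambient overtwisted structures via Eliashberg, show the complements are tight by a gluing argument, and distinguish them by Giroux torsion. You have also correctly located the two genuine difficulties and the role of non-triviality of the open book.

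For the tightness step, ``a standard gluing argument for tight contact structures along convex tori'' is too optimistic as stated: such gluings are not tight in general. The paper instead arranges the interface tori to be pre-Lagrangian (linear characteristic foliation), observes that both the binding complement and the torsion layers are \emph{universally} tight, and invokes Colin's gluing theorem \cite{Colin99} for universally tight structures along incompressible pre-Lagrangian tori. Non-triviality of the open book is used exactly where you say, to guarantee incompressibility.

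For the distinctness step, the phrase ``since the torsion of the original complement is finite, this torsion is unbounded in $n$'' hides the real work. What is actually needed is that each $C_n$ has \emph{finite} Giroux torsion in the boundary-parallel class; knowing this only for $C_0$ does not obviously propagate after inserting torsion layers, and there is no general additivity statement to appeal to. The paper's Lemma~\ref{lem:torsioncompute} proves $\tor(\overline{C}_n,[T])=n$ exactly, and this is the main technical content of the argument: one doubles the complement along its boundary, passes to the infinite cyclic cover $S\times\R$, shows the pulled-back contact structure is $\R$-invariant with many boundary-parallel dividing curves on $S$, and then applies Giroux's semi-local Bennequin inequality to bound the twisting of a doubled Legendrian arc, which in turn bounds the torsion from above. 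Without this (or an equivalent finiteness argument for each $C_n$) the proof is incomplete.
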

This theorem gives the first known example of a knot type and contact structure which supports an infinite number of transverse knots with the same self-linking number. 
We have a similar result using half-Lutz twists. 
\begin{thm}\label{thm:infinite2}
Let $(B,\pi)$ be a non-trivial  open book decomposition with connected binding of a closed 3--manifold $M$ and let $\xi_B$ be the contact structure it supports.
Denote by $\xi$ the contact structure obtained from $\xi_B$ by a half Lutz twist along $B$.  
Then $\mathcal{T}_{\chi(B)}(B)$ contains infinitely many distinct non-loose transverse knots up to contactomorphism (and hence isotopy too).
\end{thm}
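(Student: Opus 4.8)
The plan is to observe that the proof of Theorem~\ref{thm:infinite1} applies essentially verbatim after replacing the full Lutz twist by a half Lutz twist, and to isolate the two points at which the two arguments genuinely diverge. First I would recall the set-up of that proof. The binding $B$ of a supporting open book is a positively transverse knot in $\xi_B$, a page is a Seifert surface for it realizing the contact framing, so $\sl_{\xi_B}(B)=-\chi(B)$, and, as recalled in the introduction (\cite{EtnyreVela-Vick10}), the complement $(M\setminus B,\xi_B|_{M\setminus B})$ is tight. A half Lutz twist along $B$ is supported in an arbitrarily small standard tubular neighbourhood $N\cong S^1\times D^2$ of $B$, so $\xi$ agrees with $\xi_B$ on $M\setminus N$ (in particular $M\setminus N$ is still tight), and since every half Lutz twist produces an overtwisted contact structure, $\xi$ is overtwisted.

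The construction of the infinite family of transverse knots $\{T_n\}$, all smoothly isotopic to $B$, and the proof that they are pairwise non-contactomorphic, are carried out exactly as in Theorem~\ref{thm:infinite1}: the $T_n$ are built from the binding together with the extra twisting of the contact planes that the Lutz modification inserts in $N$, and they are distinguished by a contact invariant of their complements --- concretely, by the relative homotopy class of the plane field on $M\setminus T_n$, or equivalently (once non-looseness is known) by the rotation numbers of their Legendrian approximations, which are then non-loose by Proposition~\ref{nltnll} and realize infinitely many distinct pairs $(\tb,\rot)$. I would reproduce this construction rather than re-derive it. The first place where the half Lutz case genuinely differs is the computation of $\sl_\xi(T_n)$. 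Carrying this out in the local model for the half Lutz twist --- where, measured against the page framing, the contact planes acquire an extra $\pi$ rather than $2\pi$ of rotation across $N$, reversing the co-orientation of $\xi$ near the core of $N$ --- negates the Euler-class pairing along a Seifert surface and hence the self-linking number, yielding $\sl_\xi(T_n)=-(-\chi(B))=\chi(B)$ for every $n$, the sign opposite to Theorem~\ref{thm:infinite1}. Note that, unlike there, this value lies strictly below the Bennequin bound $-\chi(B)$, which is consistent with Proposition~\ref{prop:nlsl}.

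The second, and main, point is establishing that each $T_n$ is non-loose, i.e. that $(M\setminus T_n,\xi)$ is tight. Here one isotopes $T_n$ so that its complement is exhibited as the tight complement $(M\setminus N,\xi_B)$ of the binding glued along a convex torus to a sub-solid-torus of the half Lutz tube $N$ on which $\xi$ is tight and in standard form, and then checks that this gluing introduces no overtwisted disk, so that $(M\setminus T_n,\xi)$ is tight; combined with the previous paragraph, tightness upgrades the distinctness of the complement invariants to the statement that the $T_n$ are pairwise non-contactomorphic, since a co-orientation-preserving contactomorphism of $(M,\xi)$ isotopic to the identity and carrying $T_n$ to $T_m$ would restrict to a contactomorphism of the complements homotopic to the identity. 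The hard part is verifying this tightness statement --- equivalently, that the transverse isotopy positioning $T_n$ really does push the overtwisted disk of the Lutz tube out of the complement --- which is precisely the delicate point already handled in the proof of Theorem~\ref{thm:infinite1}, either by a direct convex-surface analysis of the local model or by embedding $(M\setminus T_n,\xi)$ into a closed Stein fillable contact manifold. I would import that argument, adapting only the bookkeeping (the amount of twisting, the dividing slopes, and the self-linking count) from the full Lutz to the half Lutz model.
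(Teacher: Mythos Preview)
Your proposal has a genuine gap in the step that distinguishes the knots $T_n$ from one another. You claim they are separated ``by the relative homotopy class of the plane field on $M\setminus T_n$, or equivalently \ldots\ by the rotation numbers of their Legendrian approximations.'' Neither invariant can do this. All the $T_n$ have the same self-linking number $\chi(B)$, so for any fixed $\tb$ a Legendrian approximation of $T_n$ has $\rot=\tb-\chi(B)$, independent of $n$; the pairs $(\tb,\rot)$ realized are identical for every $n$. Likewise, by Lemma~\ref{tbrdet} the homotopy class (rel boundary) of the plane field on the complement of a Legendrian knot is determined by $(\tb,\rot)$, so the homotopy class of $\xi$ on $M\setminus T_n$ is the same for all $n$. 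What the paper actually uses is the Giroux torsion of the complement in the isotopy class of the boundary torus: Lemma~\ref{lem:finite} (via Lemma~\ref{lem:torsioncompute}) shows this torsion is finite and equal to $n$, which separates the $T_n$. This is the substantive content of the proof of Theorem~\ref{thm:infinite1} that you must import; without it, you have not shown the family is infinite.

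Two smaller points. First, your description of the non-looseness argument is imprecise: the complement of $T_n$ is $(\overline{M\setminus N(B)})$ glued to a thickened torus $T^2\times[0,s_n]$ (not a ``sub-solid-torus''), along a torus with \emph{linear} (pre-Lagrangian) characteristic foliation, and tightness follows from Colin's gluing theorem for universally tight pieces along incompressible pre-Lagrangian tori --- not from a convex-surface analysis or a Stein-fillable embedding. Second, you should make explicit that the family $\{T_n\}$ arises by performing a half Lutz twist followed by $n$ full Lutz twists (all resulting $\xi_n$ are homotopic, hence isotopic to $\xi$ by Eliashberg), and that $T_n$ is the core of the Lutz tube; your phrase ``the Lutz modification'' obscures where the index $n$ enters.
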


Finally we have the following less specific but more general theorem along these lines.
\begin{thm}\label{GeneralInfinite}
Let $K$ be a null-homologous knot type in a closed irreducible 3-manifold $M$ with Seifert genus $g>0$. There is an overtwisted contact structure for which $\mathcal{T}_{2g-1}(K)$ is infinite and a distinct overtwisted contact structure for which $\mathcal{T}_{-2g+1}(K)$ is infinite. 
\end{thm}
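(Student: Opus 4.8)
The plan is to realize $K$ as a curve on a page of an open book of $M$ and then run the construction behind Theorems~\ref{thm:infinite1} and~\ref{thm:infinite2} one level down. When $K$ is fibered there is nothing to do: $K$ is the binding of its own open book, $-\chi(K)=2g-1$, and a full (resp.\ half) Lutz twist along $K$ makes $\mathcal{T}_{2g-1}(K)$ (resp.\ $\mathcal{T}_{-2g+1}(K)$) infinite directly by Theorem~\ref{thm:infinite1} (resp.\ Theorem~\ref{thm:infinite2}). So the real content is the non-fibered case.

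First I would produce the topological input. Let $\Sigma$ be a minimal genus Seifert surface for $K$, so $\chi(\Sigma)=1-2g$. I claim there is an open book decomposition $(F,\phi)$ of $M$ and an isotopy of $M$ carrying $\Sigma$ onto a subsurface of a page $F$, with $K=\partial\Sigma$ lying in the interior of $F$; in particular the page framing of $K$ is its Seifert framing, and the page $F$, containing $\Sigma$, has positive genus. This is a Stallings-type statement: cutting $M$ along $\Sigma$ produces a sutured manifold which, after plumbing on Hopf bands, becomes a product sutured manifold, and reassembling gives a fiber surface $F\supset\Sigma$. Irreducibility of $M$ is used to run this argument (and, below, for the homotopy classification of plane fields), while $g>0$ forces $F$ to have positive genus; equivalently, as will matter below, $g>0$ forces the meridian of $K$ to be incompressible in the complement of $K$.

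Next comes the contact-geometric step. Let $\xi_F$ be the Stein fillable, hence tight, contact structure supported by $(F,\phi)$, and let $\widehat K$ be the transverse push-off of $K$ off the page with $\sl(\widehat K)=-\chi(\Sigma)=2g-1$ in $\xi_F$ --- the same local computation that yields $\sl=-\chi(\text{page})$ for a binding. Because $\xi_F$ is tight, the complement of $\widehat K$ is tight, and because $g>0$ the boundary torus of a thin tubular neighbourhood $N(\widehat K)$ is incompressible in that complement. Performing a full Lutz twist in a neighbourhood of $\widehat K$ yields an overtwisted contact structure $\xi$ on $M$ (a solid torus carrying a full Lutz twist is overtwisted), but the overtwistedness is confined to the Lutz tube: the complement of a thin core neighbourhood of $\widehat K$ is the original tight complement with a twisted $T^2\times[0,1]$ glued along its incompressible boundary, which is still tight. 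Hence $\widehat K$ is non-loose in $(M,\xi)$, and since a full Lutz twist preserves the self-linking number, $\sl(\widehat K)=2g-1$. Using a half Lutz twist along $\widehat K$ instead gives a different overtwisted structure in which $\widehat K$ is non-loose with $\sl(\widehat K)=-2g+1$ --- a half Lutz twist turns $-\chi(\Sigma)$ into $\chi(\Sigma)$, exactly as for a binding in Theorem~\ref{thm:infinite2}.

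Finally, to get infinitely many I would, for each $n\ge 1$, insert $n$ additional units of Giroux torsion along a torus parallel to $\partial N(\widehat K)$ inside the complement, obtaining a transverse knot $T_n$. Giroux torsion does not change the homotopy class of the ambient plane field, so by Eliashberg's classification of overtwisted contact structures all the $T_n$ live in the \emph{same} overtwisted structure $\xi$; each $T_n$ is a transverse representative of $K$ with self-linking number $2g-1$ (resp.\ $-2g+1$), and each is non-loose because the extra torsion is glued along incompressible tori inside a tight manifold. The complement of $T_n$ has Giroux torsion tending to infinity with $n$, so the complements, and therefore the $T_n$, are pairwise non-contactomorphic, and $\mathcal{T}_{2g-1}(K)$, resp.\ $\mathcal{T}_{-2g+1}(K)$, is infinite. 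The delicate points --- where the work beyond Theorems~\ref{thm:infinite1} and~\ref{thm:infinite2} lies --- are the topological realization in the second paragraph (with enough control that the page framing is exactly the Seifert framing, so the self-linking number is exactly $2g-1$) and the verification that the complements genuinely stay tight after the Lutz twist and the stacked Giroux torsion, i.e.\ that the incompressibility supplied by $g>0$ really is enough, and are distinguished by the Giroux torsion invariant; all of these are the analogues, for a curve on a page, of points already settled for the binding in the proofs of Theorems~\ref{thm:infinite1} and~\ref{thm:infinite2}.
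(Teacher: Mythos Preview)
Your approach is genuinely different from the paper's, and it has a real gap.

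The paper does not embed $\Sigma$ in a page of an open book at all. It works directly with the knot complement $C=\overline{M\setminus N(K)}$ as a taut sutured manifold (sutures two meridians), invokes Theorem~1.1 of Honda--Kazez--Mati\'c \cite{HondaKazezMatic02} to produce a universally tight contact structure on $C$ in which $\Sigma$ is convex with a single boundary-parallel dividing curve, and then glues on $(T^2\times[0,\infty),\xi_T)$ using the convex version of Colin's gluing theorem. Collapsing along a sequence of meridional tori $T_n$ produces the transverse knots $K_n$ in a fixed overtwisted structure on $M$; the self-linking number is read off from the dividing set on the extended $\Sigma$, and the $K_n$ are distinguished by citing Proposition~4.6 of \cite{HondaKazezMatic02}.

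The main gap in your argument is the sentence ``Let $\xi_F$ be the Stein fillable, hence tight, contact structure supported by $(F,\phi)$.'' You have no control over the monodromy $\phi$: your ``Stallings-type'' construction (which, incidentally, is not what Hopf plumbing does---plumbing Hopf bands onto $\Sigma$ changes $\partial\Sigma$ and, by Stallings/Gabai, preserves non-fiberedness) at best produces \emph{some} open book for $M$ with $\Sigma$ in a page, and there is no reason its supported contact structure is tight; indeed $M$ may admit no Stein fillable, or even no tight, contact structure at all. If $\xi_F$ is overtwisted then the complement of $\widehat K$ is already overtwisted before the Lutz twist, and every $T_n$ you build is loose. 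The paper's route through \cite{HondaKazezMatic02} sidesteps this completely: it never asks for a tight structure on all of $M$, only on the complement, and tautness of the sutured complement guarantees a universally tight one (which is also exactly what Colin's gluing requires). A secondary gap is your last step: knowing that the Giroux torsion of the complement of $T_n$ is \emph{at least} $n$ does not separate the $T_n$; you need finiteness as well, and the paper gets this not from a direct torsion computation but from the cited HKM proposition.
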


\begin{remark}
We note that as mentioned above, and proven in Corollary~\ref{nonlunknot} below, any transverse unknot in an overtwisted contact manifold is loose. Hence the previous theorem implies that the unknot is the unique null-homologous knot in any (irreducible) manifold that does not have non-loose transverse representatives in some overtwisted contact manifold (in fact all other null-homologous knots have non-loose transverse representatives in at least two overtwisted contact manifolds). The reason for this is simply that the unknot is the unique knot whose complement has compressible boundary. 

Since, as noted above, Legendrian approximations of non-loose transverse knots are also non-loose, we see that all null-homologous knots have non-loose Legendrian representatives in some overtwisted contact structure. \hfill \qed 
\end{remark}

Using results from \cite{EtnyreVanHornMorris10} we can refine Theorem~\ref{thm:infinite1} for hyperbolic knots to give the coarse classification of transverse knots in a fibered hyperbolic knot type that realize the upper bound in Proposition~\ref{prop:nlsl}.
\begin{thm}\label{thm:tclassify}
Let $(B,\pi)$ be an open book decomposition with connected binding of a closed 3--manifold $M$ and let $\xi_B$ be the contact structure it supports.
Denote by $\xi$ the contact structure obtained from $\xi_B$ by a full Lutz twist along $B$.
If $B$ is a hyperbolic knot then 
\[
\mathcal{T}_{-\chi(B)}(B)=\{K_*\}\cup\{K_i\}_{i\in A},
\]
where $A=\N$ if $\xi_B$ is tight and $A=\N\cup \{0\}$ if not; and if $\xi'$ is any overtwisted contact structure not isotopic to $\xi$ then
\[
\mathcal{T}_{-\chi(B)}(B)=\{K_*\}.
\]
 Moreover 
\begin{itemize}
\item $K_*$ is loose and the knot $K_i$ is non-loose and has Giroux torsion $i$ along a torus parallel to the boundary of a neighborhood of $K_i$.
\item The Heegaard-Floer invariants of all knots in $\mathcal{T}_{-\chi(B)}(B)$ vanish except for $K_0$, which only exists if $\xi_B$ is overtwisted.
\item All knots in $\mathcal{T}_{-\chi(B)}(B)$, except possibly $K_0$, if it exists, become loose after a single stabilization. 
\end{itemize}
\end{thm}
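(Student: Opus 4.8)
The plan is to analyze an arbitrary transverse knot $T$ in the smooth knot type $B$ with $\sl(T)=-\chi(B)$, sitting in an overtwisted contact structure $\xi'$ on $M$, by passing to its complement. Since $B$ is fibered, a page $\Sigma$ is a minimal genus Seifert surface, so $\chi(B)=\chi(\Sigma)$, and the equality case of Proposition~\ref{prop:nlsl} forces the contact framing on $\partial\nu(T)$ to be the page framing; after making $\partial\nu(T)$ convex, its dividing set must then consist of two copies of the page slope. Thus $\xi'|_{M\setminus\nu(T)}$ is a contact structure on the fixed manifold $M_\phi:=M\setminus\nu(B)$, the mapping torus of the monodromy, with a fixed convex boundary condition. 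The basic dichotomy is whether this structure is overtwisted or tight. If it is overtwisted, then $T$ is loose; since $-\chi(B)$ is odd (the connected binding forces one boundary component on the page and non-triviality forces positive genus), Theorem~\ref{thm:transverseloose} shows there is a unique such $T$ up to contactomorphism, and that this loose knot $K_*$ exists in \emph{every} overtwisted contact structure on $M$. It remains to classify the $T$ with tight complement, which will be the non-loose $K_i$.

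For this I would invoke, from \cite{EtnyreVanHornMorris10}, the classification of tight contact structures on $M_\phi$ carrying the above boundary condition; this is exactly where hyperbolicity of $B$ is used, since it makes $M_\phi$ atoroidal and so the convex surface arguments there apply with no exceptional cases, yielding that such tight structures form a sequence $\eta_0,\eta_1,\eta_2,\dots$, where $\eta_i$ is obtained from the minimal twisting structure $\eta_0$ by inserting $i$ units of Giroux torsion along a torus parallel to $\partial M_\phi$. I would match this against an explicit construction: a full Lutz twist of $\xi_B$ along $B$ yields $\xi$, and the core $K_1$ of the Lutz tube is a transverse representative of $B$ with $\sl(K_1)=-\chi(B)$ whose complement is $\eta_1$ (the Lutz tube with its core removed being one Giroux torsion layer); inserting $i-1$ further layers near $\partial\nu(K_1)$ produces $K_i$ with complement $\eta_i$, which is non-loose because $\eta_i$ is tight, and the $K_i$ are pairwise non-contactomorphic because Giroux torsion is an invariant of the complement. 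This provides the $K_i$ for $i\geq1$, and (refining Theorem~\ref{thm:infinite1}) already the infinitude assertion.

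To determine which of these glue up to $\xi$, to pin down exactly when $K_0$ appears, and to handle $\xi'\not\simeq\xi$, I would run the relative Pontryagin--Thom bookkeeping underlying Theorems~\ref{thm:legloose} and~\ref{thm:transverseloose}: inserting Giroux torsion does not change the homotopy class of the plane field, even rel boundary, so all of $\eta_0,\eta_1,\dots$ glue up (with the standard transverse solid torus) to closed contact structures in a single homotopy class of plane fields, and a full Lutz twist also preserves the homotopy class, so that class is the class of $\xi_B$, hence of $\xi$. For $i\geq1$ the glued up structure contains Giroux torsion along a torus that bounds the solid torus $\nu(K_i)$, hence is compressible, so the structure is overtwisted, and by Eliashberg's classification \cite{Eliashberg89} it must be $\xi$. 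On the other hand $\eta_0$ glues up to $\xi_B$ itself: if $\xi_B$ is overtwisted this equals $\xi$ and we obtain $K_0\in\mathcal{T}_{-\chi(B)}(B)$, whereas if $\xi_B$ is tight then $\eta_0$ glues up to the tight structure $\xi_B\neq\xi$, so $K_0$ is absent and $A=\mathbb{N}$. The same bookkeeping shows no non-loose representative lives in an overtwisted $\xi'$ not isotopic to $\xi$, giving $\mathcal{T}_{-\chi(B)}(B)=\{K_*\}$ there.

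Finally, the three bullets. The first is immediate from the description just obtained. For the Heegaard--Floer statement: $K_*$ is loose, so its transverse invariant vanishes; each $K_i$ with $i\geq1$ has Giroux torsion in its complement, so its transverse invariant vanishes by the sutured version of the Ghiggini--Honda--Van Horn-Morris vanishing theorem; and $K_0$, when it exists, is the binding of the open book $(\Sigma,\phi)$, so its transverse invariant is the generator of the top Alexander grading summand of the knot Floer homology of the fibered knot $B$, which is nonzero. For the stabilization statement: $K_*$ is already loose, and for $i\geq1$ one checks that a single negative stabilization of $K_i$ replaces $\partial\nu(K_i)$ by a boundary whose combination with one remaining Giroux torsion layer of $\eta_i$ contains an overtwisted disk, so the stabilized knot is loose; this mechanism is unavailable for $K_0$, which carries no torsion, which is why it is the exception. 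I expect the main obstacle to be the middle step: extracting from \cite{EtnyreVanHornMorris10} that on the hyperbolic cusped manifold $M_\phi$ the tight contact structures with the prescribed boundary are \emph{precisely} the Giroux torsion family and nothing more, together with pushing the relative Pontryagin--Thom computation far enough to be certain the glued up structures are $\xi$ itself rather than some other contact structure in the same homotopy class of plane fields — these two ingredients are what upgrade the Lutz-twist-plus-torsion construction from an existence statement to a complete classification.
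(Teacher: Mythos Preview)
Your overall strategy matches the paper's: feed a non-loose $T$ with $\sl(T)=-\chi(B)$ into the main result of \cite{EtnyreVanHornMorris10}, use hyperbolicity of $B$ to force any incompressible torus in the complement to be boundary-parallel, conclude that $T$ is one of the Lutz-twist cores $B_n$, and distinguish these by the Giroux torsion of the complement. The paper does exactly this, invoking Theorem~\ref{thm:maxclass} directly (as a statement about the ambient contact structure being obtained from $\xi_B$ by adding torsion) rather than framing it as a classification of tight structures on $M_\phi$ with a fixed convex boundary; but these are equivalent packagings of the same input.

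Two issues, one cosmetic and one substantive. First, your opening move---``the equality case of Proposition~\ref{prop:nlsl} forces the contact framing on $\partial\nu(T)$ to be the page framing; after making $\partial\nu(T)$ convex, its dividing set must then consist of two copies of the page slope''---is not right as stated. A transverse knot has no canonical contact framing, its standard neighborhood has concentric tori of every negative slope, and Proposition~\ref{prop:nlsl} is a bare inequality whose equality case carries no framing information. The paper avoids this entirely by applying Theorem~\ref{thm:maxclass} to $T$ as a transverse link in the \emph{closed} manifold, so no boundary condition ever needs to be set up. Your later invocation of the \cite{EtnyreVanHornMorris10} classification is the correct step; the framing discussion preceding it should simply be deleted. (Relatedly: transverse knots have one kind of stabilization, not ``negative'' ones.)

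Second, and more important, you assert that the $K_i$ are pairwise non-contactomorphic ``because Giroux torsion is an invariant of the complement.'' This requires knowing that the torsion of the complement of $K_i$ is \emph{exactly} $i$, not merely $\geq i$; a priori, inserting $i$ torsion layers could coincide with inserting $j$ of them after a contactomorphism. The paper proves this exact computation as Lemma~\ref{lem:torsioncompute}, via an auxiliary twisting invariant $mt(\gamma)$ for arcs on a page and Giroux's semi-local Bennequin inequality on a doubled cover; this is the technical heart of the argument and you have not supplied a substitute for it.
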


We note that $K_0$ in this theorem might or might not be loose after a single stabilization, but it will certainly become loose after a sufficiently large number of stabilizations. 

\begin{remark}
We note that while Theorem~\ref{thm:infinite2} does allow us to conclude that for the binding of an open book there is at least one overtwisted contact structure such that $\mathcal{T}_{\chi(B)}(B)$ is infinite, the technology in \cite{EtnyreVanHornMorris10} does not appear strong enough to prove a results similar to Theorem~\ref{thm:tclassify} in this case. \hfill \qed 
\end{remark}

\subsection{Non-loose Legendrian knots and the coarse classification of Legendrian knots}
We can use the coarse classification of transverse knots to understand some Legendrian knots in the same knot types. If $(M,\xi)$ is a contact 3--manifold and $\mathcal{K}$ is a topological knot type, then we denote by $\mathcal{L}(\mathcal{K})$ the set of all Legendrian knots in the knot type $\mathcal{K}$ up to contactomorphism (co-orientation preserving and smoothly isotopic to the identity). If $m$ and $n$ are two integers then 
\[
\mathcal{L}_{m,n}(\mathcal{K})=\{L\in \mathcal{L}(\mathcal{K}): \rot(L)=m  \text{ and }  \tb(L)=n\}
\]
is the set of Legendrian knots in the knot type $\mathcal{K}$ with rotation number $m$ and Thurston-Bennequin invariant equal to $n$.

\begin{thm}\label{thm:lclassify}
Let $(B,\pi)$ be an open book decomposition with connected binding of a closed 3--manifold $M$ and let $\xi_B$ be the contact structure it supports.
Denote by $\xi$ the contact structure obtained from $\xi_B$ by a full Lutz twist along $B$.
If $B$ is a hyperbolic knot then there is an $m\in \Z\cup\{\infty\}$ depending only on $B$ such that for each fixed integer $n$ we have
\[
\mathcal{L}_{ \chi(B)+n,n}(B)=\{L_*\}\cup\{L_{n,i}\}_{i\in A_n},
\]
where $A_n=\N$ if $\xi_B$ is tight or $n>m$ and $A_n=\N\cup \{0\}$ if not; and if $\xi'$ is any overtwisted contact structure not isotopic to $\xi$ then
\[
\mathcal{L}_{ \chi(B)+n,n}(B)=\{L_*\}.
\]
See Figure~\ref{infg}. Moreover 
\begin{itemize}
\item $L_*$ is loose and the knot $L_{n,i}$ is non-loose and has Giroux torsion $i$ along a torus parallel to the boundary of a neighborhood of $L_{n,i}$.
\item The Heegaard-Floer invariants of all knots in $\mathcal{L}_{\chi(B)+n,n}(B)$ vanish except for $L_{n,0}$, which is non-zero. (Recall $L_{n,0}$ only exists if $\xi_B$ is overtwisted and $n\leq m$.) 
\item All knots in $\mathcal{L}_{\chi(B)+n,n}(B)$, except possibly $L_{n,0}$, if it exists, become loose after a single positive stabilization.  
\item The negative stabilization of $L_{n,i}$ is $L_{n-1,i}$.
\end{itemize}
\end{thm}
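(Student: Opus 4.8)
The plan is to read off the classification of $\mathcal{L}_{\chi(B)+n,n}(B)$ from the transverse classification in Theorem~\ref{thm:tclassify}, together with an analysis of the complement of a standard Legendrian neighborhood. For a Legendrian knot $L$ with $\tb(L)=n$ and $\rot(L)=\chi(B)+n$ the positive transverse push-off satisfies $\sl(T_+(L))=\tb(L)-\rot(L)=-\chi(B)$, so $T_+$ carries $\mathcal{L}_{\chi(B)+n,n}(B)$ into $\mathcal{T}_{-\chi(B)}(B)$; moreover $L$ is a Legendrian approximation of $T_+(L)$, the negative stabilization $S_-$ lowers $\tb$ and $\rot$ each by one (so it maps $\mathcal{L}_{\chi(B)+n,n}(B)$ into $\mathcal{L}_{\chi(B)+(n-1),(n-1)}(B)$ and does not change $T_+$), and $T_+(S_+(L))$ is the transverse stabilization of $T_+(L)$. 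By Proposition~\ref{nltnll}, if $T_+(L)$ is non-loose then $L$ is non-loose; equivalently, $L$ loose implies $T_+(L)$ loose.

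The loose part is then immediate: by the coarse classification of loose Legendrian knots, Theorem~\ref{thm:legloose}, there is exactly one loose Legendrian knot $L_*$ with $\tb=n$ and $\rot=\chi(B)+n$ (it exists because $\chi(B)$ is odd), and by the previous paragraph together with Theorem~\ref{thm:tclassify} its push-off is the unique loose transverse knot $K_*$. So everything comes down to the \emph{non-loose} members of $\mathcal{L}_{\chi(B)+n,n}(B)$: such an $L$ has $M\setminus N(L)$ tight and topologically the complement of the hyperbolic knot $B$, with convex boundary whose dividing slope is determined by $\tb(L)=n$, and conversely these tight structures, with the appropriate relative Euler-class data, reassemble to the non-loose members of $\mathcal{L}_{\chi(B)+n,n}(B)$. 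I would therefore classify the tight contact structures on the complement of $B$ with boundary of the slope corresponding to $\tb=n$ --- for \emph{every} $n$, not merely the one for which the binding realizes the Bennequin bound --- and read off the Legendrian knots. Extending the analysis underlying Theorem~\ref{thm:tclassify} (which rests on \cite{EtnyreVanHornMorris10}), one expects these tight structures to be parametrized by the Giroux torsion $i$ along a torus parallel to the boundary, with $i\in\N$ always occurring and $i=0$ occurring exactly when $\xi_B$ is overtwisted and $n\le m$ for an invariant $m\in\Z\cup\{\infty\}$ of $B$; each reassembles to $(M,\xi)$, hence to no non-isotopic overtwisted $\xi'$, and yields a knot $L_{n,i}$ with $T_+(L_{n,i})=K_i$. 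This gives $\mathcal{L}_{\chi(B)+n,n}(B)=\{L_*\}\cup\{L_{n,i}\}_{i\in A_n}$ with $A_n$ as stated, and $\mathcal{L}_{\chi(B)+n,n}(B)=\{L_*\}$ for $\xi'$ not isotopic to $\xi$.

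The remaining bullet points follow from this picture. By construction $L_{n,i}$ has Giroux torsion $i$, and since negative stabilization attaches a basic slice away from the torsion torus we get $S_-(L_{n,i})=L_{n-1,i}$. The Heegaard--Floer statement transports the corresponding bullet of Theorem~\ref{thm:tclassify}: the transverse invariant of $T_+(L)$ agrees under the natural identification with the Legendrian invariant of $L$, which is unchanged by negative stabilization and vanishes for loose knots, so the Legendrian invariant of $L_{n,i}$ is nonzero exactly when $i=0$ and vanishes for $L_*$. For the positive-stabilization bullet, $T_+(S_+(L_{n,i}))$ is the transverse stabilization of $K_i$, which is loose for $i\ge1$ by Theorem~\ref{thm:tclassify}; correspondingly $S_+(L_{n,i})$ is loose, since a positive stabilization attaches a basic slice of the sign opposite to the one absorbed by negative stabilization, and together with the Giroux torsion this forces an overtwisted disk in the complement (while $L_*$, already loose, stays loose). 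The invariant $m$ is identified along the way as the largest $\tb$ for which the $i=0$ structure exists.

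The main obstacle is the classification of tight contact structures on the complement of the hyperbolic knot $B$ over \emph{all} the boundary slopes corresponding to integral Thurston--Bennequin values, rather than the single slope treated in \cite{EtnyreVanHornMorris10} and used in Theorem~\ref{thm:tclassify}. I expect the passage between slopes can be handled by a bypass/gluing argument --- moving between neighboring slopes corresponds, on the Legendrian side, to a negative stabilization --- so that everything reduces to the extremal slope; but pinning down the behaviour there, in particular the cutoff $m$ and the fact that the exotic $i=0$ structure (the one with nonvanishing Heegaard--Floer invariant) exists precisely when $\xi_B$ is overtwisted, is the delicate step, and is where the input of \cite{EtnyreVanHornMorris10} must be combined with the properties of the transverse invariants supplied by Theorem~\ref{thm:tclassify}.
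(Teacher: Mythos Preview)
Your framework is correct and matches the paper's: push off to transverse, invoke Theorem~\ref{thm:tclassify} and Theorem~\ref{thm:legloose} for the loose part, and then classify the tight contact structures on $M\setminus N(L)$ with the given boundary slope. You also correctly identify that this last step is the crux. The gap is that you do not carry it out, and the reduction you sketch --- moving between neighboring integral slopes by peeling off the basic slice coming from $S_-$ --- is not how the paper proceeds and would require, at minimum, showing that every non-loose $L\in\mathcal{L}_{\chi(B)+n,n}(B)$ negatively \emph{de}stabilizes, which you have not argued.

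The paper avoids any slope-by-slope reduction. For an arbitrary non-loose $L$ with $\tb(L)=n$ and $\rot(L)=\chi(B)+n$ it takes a standard neighborhood $N$ with longitudinal ruling, notes that the ruling curve gives a transverse (or negatively transverse, when $n>0$) boundary for a page $\Sigma$ of the fibration of $M\setminus N$, and then applies Lemmas~3.3--3.5 of \cite{EtnyreVanHornMorris10} directly to $\Sigma$: one isotopes $\Sigma$ to be convex with monodromy-invariant dividing set, the pseudo-Anosov hypothesis forces $\Gamma_\Sigma$ to consist of curves parallel to $\partial\Sigma$, and a handlebody argument shows the contact structure on $M\setminus N$ is determined by the number of components of $\Gamma_\Sigma$. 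Matching this number to the explicit Lutz-tube Legendrian approximations $L_{n,i}$ of $K_i$ finishes the classification; no passage between different Thurston--Bennequin values is needed. This is the missing idea in your proposal.
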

\begin{figure}[ht] 
  \relabelbox{
  \centerline{\includegraphics[width=2.8in]{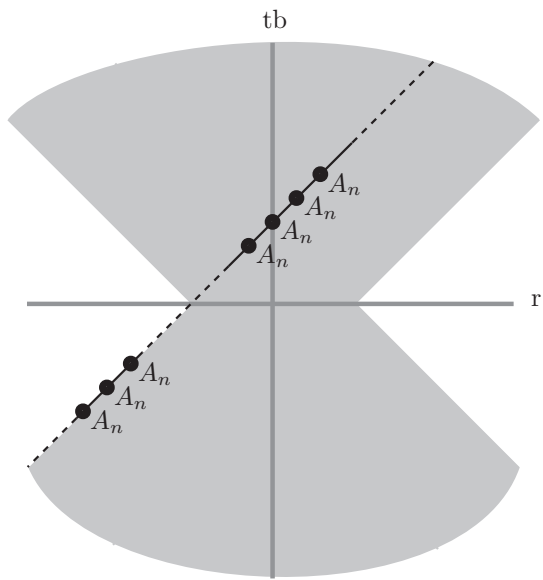}}}
  \relabel{1}{$A_n$}
  \relabel{2}{$A_n $}
  \relabel{3}{$A_n $}
  \relabel{4}{$A_n $}
  \relabel{5}{$A_n $}
  \relabel{6}{$A_n $}
  \relabel{7}{$A_n $}
  \relabel{t}{$\tb$}
  \relabel{r}{$\rot$}
  \endrelabelbox
  \caption{The knots discussed in Theorem~\ref{thm:lclassify}. A dot at $(m,n)$ indicates there is a non-loose knot with rotation number $m$ and Thurston-Bennequin invariant $n$. The subscript on the dot indicates the set that indexes the family of non-loose Legendrian knots with invariants given by the coordinates. The line is given by $-m+n=-\chi(B)$.}
  \label{infg}
\end{figure}   
As we noted in the transverse case, this theorem provides the first examples of a knot type and contact structure that support an infinite number of Legendrian knots with the same classical invariants. We also remark that if the hypothesis on $B$ being a hyperbolic knot is dropped then, as in Theorem~\ref{thm:infinite1} for transverse knots, one can still conclude that there are infinitely many Legendrian knots with fixed invariants as in the theorem, but we cannot necessarily give a classification of these Legendrian knots. 

We notice now that if $-B$ and $B$ are smoothly isotopic then we can extend the above classification.
\begin{thm}\label{thm:linvertableclassify}
Given the hypothesis and notation of Theorem~\ref{thm:lclassify}, suppose that $-B$ is smoothly isotopic to $B$,  where $-B$ denotes $B$ with its orientation reversed. Then for each fixed integer $n$ we have 
\[
\mathcal{L}_{-\chi(B)-n,n}(B)=\{L_*\}\cup\{L_{n,i}\}_{i\in A_n},
\]
where $A_n=\N$ if $\xi_B$ is tight or $n>m$ and $A_n=\N\cup \{0\}$ if not; and if $\xi'$ is any overtwisted contact structure not isotopic to $\xi$ then
\[
\mathcal{L}_{-\chi(B)-n,n}(B)=\{L_*\}.
\]
Reflecting Figure~\ref{infg} about the $\rot=0$ line depicts the knots described here.  Moreover 
\begin{itemize}
\item $L_*$ is loose and the knot $L_{n,i}$ is non-loose and has Giroux torsion $i$ along a torus parallel to the boundary of a neighborhood of $L_{n,i}$.
\item The Heegaard-Floer invariants of all knots in $\mathcal{L}_{-\chi(B)-n,n}(B)$ vanish except for $L_{n,0}$, which is non-zero. (Recall $L_{n,0}$ only exists if $\xi_B$ is overtwisted and $n\leq m$.) 
\item All knots in $\mathcal{L}_{-\chi(B)-n,n}(B)$, except possibly $L_{n,0}$ if it exists, become loose after a single negative stabilization.  
\item The positive stabilization of $L_{n,i}$ is $L_{n+1,i}$.
\end{itemize}
\end{thm}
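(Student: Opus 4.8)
The plan is to deduce Theorem~\ref{thm:linvertableclassify} from Theorem~\ref{thm:lclassify} using the involution on Legendrian knots given by reversing the orientation of the knot. If $L$ is a Legendrian knot in a contact manifold $(M,\xi)$, write $-L$ for the same Legendrian submanifold with the opposite orientation. The Legendrian condition and the co-orientation of $\xi$ are untouched, so $-L$ is again Legendrian, and a trivialization computation over a Seifert surface shows $\tb(-L)=\tb(L)$ and $\rot(-L)=-\rot(L)$. As an oriented smooth knot, $-L$ represents the knot type $-B$; the hypothesis that $-B$ is smoothly isotopic to $B$ is exactly what is needed to conclude that $-L$ again represents the knot type $B$. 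Since any co-orientation-preserving contactomorphism smoothly isotopic to the identity commutes with $L\mapsto -L$, this operation descends to a bijection
\[
\mathcal{L}_{m,n}(B)\longrightarrow \mathcal{L}_{-m,n}(B)
\]
for all $m,n$ and in every contact structure on $M$.

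First I would feed this bijection the two conclusions of Theorem~\ref{thm:lclassify} with $m=\chi(B)+n$. It takes $\mathcal{L}_{\chi(B)+n,n}(B)=\{L_*\}\cup\{L_{n,i}\}_{i\in A_n}$ to a list
\[
\mathcal{L}_{-\chi(B)-n,n}(B)=\{-L_*\}\cup\{-L_{n,i}\}_{i\in A_n}
\]
of distinct knots, and it takes $\mathcal{L}_{\chi(B)+n,n}(B)=\{L_*\}$ (for $\xi'$ overtwisted and not isotopic to $\xi$) to $\mathcal{L}_{-\chi(B)-n,n}(B)=\{-L_*\}$. Renaming $-L_*$ and $-L_{n,i}$ as the $L_*$ and $L_{n,i}$ of the present statement, and keeping the same $m\in\Z\cup\{\infty\}$ and sets $A_n$, produces the displayed equalities of the theorem.

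Next I would transport the bulleted properties across the involution. Because $-L$ has the same complement as $L$, looseness, non-looseness, and the Giroux torsion along a boundary-parallel torus are all preserved, which is the first bullet. For the stabilization bullets one uses that orientation reversal interchanges the two stabilizations, $S_+(-L)=-(S_-(L))$ and $S_-(-L)=-(S_+(L))$ (both sides carry the same classical invariants and the identity is visible in the front projection), so the statements about negative stabilization becoming loose and about positive stabilization permuting the $L_{n,i}$ follow from the corresponding statements of Theorem~\ref{thm:lclassify} after the obvious re-indexing. Finally, the Heegaard--Floer bullet follows from Theorem~\ref{thm:lclassify} together with the fact that the vanishing or non-vanishing of the relevant Legendrian (equivalently transverse) Heegaard--Floer invariant is unchanged under reversing the orientation of the knot.

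The only step that is not pure bookkeeping is this last one: one must know how the Heegaard--Floer Legendrian invariant behaves under reversing the orientation of the knot, as opposed to reversing the orientation of the ambient manifold or taking a mirror. I expect this to be the main point to pin down --- either by citing the appropriate symmetry of knot Floer homology, or, more concretely, by noting that the non-loose knots $L_{n,0}$ of Theorem~\ref{thm:lclassify} arise from a construction that is symmetric under orientation reversal, so that their invariants remain nonzero after applying the involution. Everything else reduces to the bijection above and the elementary behaviour of $\tb$, $\rot$, complements, and stabilizations under orientation reversal.
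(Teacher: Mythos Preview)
Your approach is essentially identical to the paper's: both deduce the theorem from Theorem~\ref{thm:lclassify} via the orientation-reversal involution $L\mapsto -L$, using that $\tb(-L)=\tb(L)$, $\rot(-L)=-\rot(L)$, complements (hence looseness and Giroux torsion) are preserved, and the sense of stabilization is swapped. The paper's own proof is in fact terser than yours and does not even pause over the Heegaard--Floer symmetry you flag, so your proposal is at least as complete as what appears there.
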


\begin{remark}
There is an amusing corollary of Theorems~\ref{thm:lclassify} and~\ref{thm:linvertableclassify}. Consider the set $\widetilde{\mathcal{L}}_{r,t}(B)$, where the tilde indicates that we are considering Legendrian knots up to isotopy not just contactomorphism (co-orientation preserving and isotopic to the identity). There is a natural surjective map $\widetilde{\mathcal{L}}_{r,t}(B)\to {\mathcal{L}}_{r,t}(B)$. While it is suspected that this map is not injective in general (though some times it is, for example when the ambient contact manifold is $S^3$ with its standard contact structure), it is difficult to come by specific non-injective examples. In the following paragraph we show how to use Theorems~\ref{thm:lclassify} and~\ref{thm:linvertableclassify} to construct the first such examples; that is, examples where one knows there is a difference between the classification Legendrian knots up to isotopy and the classification up to contactomorphism (isotopic to the identity). We also note that this fact proves that the space of contact structures on $M$ has a non-trivial loop based at the contact structure considered in the theorem.

Under the hypothesis of Theorem~\ref{thm:linvertableclassify}, given the non-loose Legendrian $L_{-\chi,i}\in \mathcal{L}_{0,-\chi(B)}(B)$ from Theorem~\ref{thm:lclassify} and the non-loose knot $L'_{-\chi,i}\in \mathcal{L}_{0,-\chi(B)}(B)$ from Theorem~\ref{thm:linvertableclassify} we know $L_{-\chi,i}$ and $L_{-\chi,i}'$ are contactomorphic by Theorem~\ref{thm:lclassify}. But notice that they cannot be Legendrian isotopic since a positive stabilization of $L_{-\chi,i}$ will result in a loose knot while a positive stabilization of $L_{-\chi,i}'$ will yield a non-loose knot. (Recall stabilization is a well-defined operation and hence Legendrian isotopic knots cannot have different stabilizations.) Notice that this fact concerning stabilizations does not prohibit $L_{-\chi,i}$ and $L_{-\chi,i}'$ from being contactomorphic since a contactomorphism can reverse the sense of stabilization. 
(To clarify this last statement, we notice from the proof of Theorem~\ref{thm:linvertableclassify} that $L_{-\chi,i}'=-L_{-\chi,i}$. Now let $N$ be a standard neighborhood of a Legendrian knot $L$ and let $N'\subset N$ be a standard neighborhood of a stabilization of $L$. The region $N\setminus N'=T^2\times [0,1]$ is a basic slice, see \cite{Honda00a} more on this terminology, and the sign of the basic slice is determined by the sign of the stabilization. Moreover, the sign of the stabilization is also determined by the orientation on $L$. Thus when considering a contactomorphism from $L_{-\chi,i}$ to $-L_{-\chi,i}=L_{-\chi,i}'$ we get an induced contactomorphism of their complements. Stabilizing one will add a basic slice to its complement and since the contactomorphism under consideration reverses the orientation on the underlying knot it will reverse the sign of the basic slice and hence the sense of the stabilization. We notice that the contactomorphism of the complements of $L'_{-\chi,i}$ and $L_{-\chi, i}$ extends to a co-orientation reversion contactomorphism of the complements of their stabilizations. We also notice that this phenomena can only happen since, as plane fields, $\xi$ and $-\xi$ are isotopic and the rotation numbers of the knots in question are zero.)\hfill\qed
\end{remark}

Recall that two Legendrian knots with the same classical invariants will eventually become isotopic after sufficiently many positive and negative stabilizations. Trying to understand exactly how many is an interesting question. We have the following partial results.

\begin{prop}\label{prop:stabilize}
Let $(B,\pi)$ be an open book decomposition with connected binding of a closed 3--manifold $M$ and let $\xi_B$ be the contact structure it supports.
Assume that $\xi_B$ is tight and $-B$ is not isotopic to $B$. (Here $-B$ denotes $B$ with the reversed orientation.) 
Consider the contact structure $\xi$ obtained from $\xi_B$ by a full Lutz twist along $B$.
If a knot $L\in \mathcal{L}(B)$ satisfies
\[
\tb(L)- \rot(L)> -\chi(B)
\]
then it becomes loose after $\frac 12(-\chi(B)-\tb(L))$ or fewer positive stabilizations. 
\end{prop}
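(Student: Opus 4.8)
\emph{Strategy.} The plan is to feed a transverse push-off of $L$ into the Bennequin obstruction of Proposition~\ref{prop:nlsl}, and then to upgrade looseness of that push-off to looseness of $L$ itself by analyzing how positive Legendrian stabilization shrinks a standard neighborhood, working against the explicit full Lutz tube model of $\xi$ near $B$. (If $L$ is already loose there is nothing to prove, so the interesting case is when $L$ is not loose.)

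First I would record the push-off input: the positive transverse push-off $T_{+}$ of $L$ has $\sl(T_{+})=\tb(L)-\rot(L)$, which by hypothesis exceeds $-\chi(B)=-\chi(\Sigma)$ for a page $\Sigma$ of the open book (a maximal-Euler-characteristic Seifert surface for the fibered knot $B$), so $T_{+}$ is loose by Proposition~\ref{prop:nlsl}. A positive stabilization of $L$ replaces $T_{+}$ by a transverse stabilization, while a negative one leaves it unchanged, so $T_{+}(S_{+}^{N}L)$ has self-linking number $\tb(L)-\rot(L)-2N$. As Proposition~\ref{nltnll} warns, looseness of a transverse knot need not pass to its Legendrian approximations, so this step alone does not control $L$; it is the hypotheses ``$\xi_{B}$ tight'' and ``$-B$ not isotopic to $B$'' that make the upgrade possible, by letting us bring in what is known about non-loose representatives of $B$.

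Next I would set up the neighborhood bookkeeping. Fix a standard contact neighborhood $\nu(L)\cong S^{1}\times D^{2}$; for each $N$ one can arrange $\nu(S_{+}^{N}L)\subset\nu(L)$ with $\nu(L)\setminus\nu(S_{+}^{N}L)$ a stack of $N$ basic slices of the sign dictated by positive stabilization, so that $M\setminus\nu(S_{+}^{N}L)$ is $M\setminus\nu(L)$ with such a stack glued on; the effect is to move the dividing slope of the boundary torus---measured against the page (Seifert) framing, where it begins at the value recorded by $\tb(L)$---one step per stabilization. Since $L$ has knot type $B$, its complement is a copy of the mapping torus of the page monodromy with convex boundary; and since $\xi$ is the full Lutz twist of $\xi_{B}$ along $B$, the neighborhood $\nu(B)$ carries the standard Lutz tube, which contains a definite overtwisted disk at a convex torus whose dividing slope against the page framing is pinned by $\chi(B)$. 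Combining this with the convex-surface analysis underlying the classification results of the paper (Theorems~\ref{thm:lclassify} and~\ref{thm:tclassify}) gives the dichotomy we want: once the boundary dividing slope of $\partial\nu(S_{+}^{N}L)$ has been pushed past that critical value, the complement of $S_{+}^{N}L$ must contain the Lutz overtwisted disk, so $S_{+}^{N}L$ is loose.

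It remains to count. Each positive stabilization lowers $\tb$ by one, so starting from $\tb(L)$ the slope budget to spend before the boundary slope reaches the Lutz disk is $-\chi(B)-\tb(L)$; because positive stabilization moves $\rot$ along with $\tb$, the slopes in question are crossed two at a time, costing $\tfrac12(-\chi(B)-\tb(L))$ stabilizations, and the strict inequality $\tb(L)-\rot(L)>-\chi(B)$ is exactly what makes this threshold be reached strictly before $S_{+}^{N}L$ could return to a non-loose configuration. I expect the main obstacle to lie entirely here: one must show rigorously that after the boundary slope crosses the critical value the complement genuinely swallows an overtwisted disk---rather than merely picking up extra Giroux torsion, which would stay tight---and extract the exact constant rather than a qualitative ``eventually loose.'' This is where Honda's convex-surface and gluing bookkeeping, the explicit Lutz model, and the classification input have to be combined to exclude any residual tight configuration, and it is on this that the precise form of the stated bound rests.
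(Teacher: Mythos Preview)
Your approach diverges substantially from the paper's, and the geometric Lutz-tube argument you sketch has a genuine gap.

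The central problem is that $L$ is an \emph{arbitrary} Legendrian in the knot type $B$. You write that ``the neighborhood $\nu(B)$ carries the standard Lutz tube,'' but there is no a priori relation between $\nu(L)$ and that tube: nothing places $L$, or any sequence $S_{+}^{N}(L)$, inside the explicit model where the overtwisted disk from the Lutz twist is visible. Consequently your slope-tracking argument---watching $\partial\nu(S_{+}^{N}L)$ ``cross a critical value'' and then ``swallow the Lutz overtwisted disk''---never gets off the ground, and your derivation of the constant $\tfrac12(-\chi(B)-\tb(L))$ is a numerological match rather than a proof. You also mention the hypotheses ``$\xi_{B}$ tight'' and ``$-B$ not isotopic to $B$'' but never actually use them; in your argument they are decorative.

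The paper's proof is cleaner and bypasses all geometry of the Lutz tube. Set $n=\tfrac12(-\chi(B)-\tb(L))$ and positively stabilize $n$ times, so that $\tb(S_{+}^{n}L)-\rot(S_{+}^{n}L)=-\chi(B)$; that is, $S_{+}^{n}L$ lands on the line where Theorem~\ref{thm:lclassify} gives a complete coarse classification. Because $\xi_{B}$ is tight, the exceptional knot $L_{n,0}$ does not occur, so every non-loose knot on that line stays non-loose under \emph{all} negative stabilizations. The hypothesis that $-B$ is not isotopic to $B$ guarantees that a co-orientation-preserving contactomorphism isotopic to the identity cannot swap the sense of stabilization, so this persistence transfers back to $S_{+}^{n}L$ itself. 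But the original strict inequality $\tb(L)-\rot(L)>-\chi(B)$ forces $S_{-}^{m}(L)$, for $m$ large, to violate the \'Swi\c{a}tkowski bound of Proposition~\ref{looseLegbound} and hence to be loose; since $S_{-}^{m}S_{+}^{n}(L)=S_{+}^{n}S_{-}^{m}(L)$ would then have to be simultaneously loose and non-loose, $S_{+}^{n}(L)$ must already be the loose knot $L_{*}$. Your transverse push-off observation (that $T_{+}$ is loose by Proposition~\ref{prop:nlsl}) is correct but plays no role.
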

There is a similar statement for the case when $B$ is isotopic to $-B$. 
In the next subsection we discuss the ``geography problem" for Legendrian knots and in particular at the end of the section we discuss the significance of this proposition. 

\begin{remark}
We notice that examining the proof of Propostion~\ref{prop:stabilize} allows one to drop the hypothesis that $-B$ is not isotopic to $B$ if $\tb(L)+\rot(L)\not=-\chi(B).$\hfill\qed
\end{remark}

\subsection{The geography for non-loose knots}
Let $\mathcal{K}$ be a knot type on a contact manifold $(M,\xi)$. We have the map
\[
\Phi:\mathcal{L}(\mathcal{K})\to \Z\times \Z: L \mapsto (\rot(L), \tb(L)).
\]
Determining which pairs of integers are in the image of $\Phi$ is called the \dfn{Legendrian geography problem}. 
The image if $\Phi$ is frequently called the \dfn{Legendrian mountain range} of $\mathcal{K}$ because in the case that $\xi$ is the tight contact structure on $S^3$ the image resembles the silhouette of a mountain range.  This structure comes from the facts that when $\xi$ is the tight contact structure on $S^3$ we know that (1) the image is symmetric, (2) the Thurston-Bennequin invariant is bounded above, and (3) positive and negative stabilizations show that the pairs $(m-k,n+l)$, where $k\geq 0, |l|\leq k$, and $l+k$ is even, are in the image if $(m,n)$ is. (Recall that for any Legendrian knot $L$ we must have that $\tb(L)+\rot(L)$ is odd, so (3) says all possible pairs of points in the cone with with sides of slope $\pm 1$ and top vertex $(m,n)$ are realized by Legendrian knots that are stabilizations of the given knot.)  For a general contact structure we only have (3) and if the structure is tight (2). 

Once one understand the Legendrian geography problem for a knot type $\mathcal{K}$ then the classification of Legendrian knots will be complete when one understands the preimage of each pair of integers. Determining $\Phi^{-1}(m,n)$ is known as the \dfn{botany problem for $\mathcal{K}$.} The \dfn{weak botany problem} asked to determine if $\Phi^{-1}(m,n)$ is empty, finite or infinite for each pair $(m,n)$. As mentioned above if $\mathcal{K}$ is a hyperbolic knot and $\xi$ is tight then the preimage of any $(m,n)$ is either empty or finite. 

If $\xi$ is overtwisted then every pair of integers $(m,n)$ for which $m+n$ is odd is in the image of $\Phi$. Thus the general geography problem is not interesting for overtwisted contact structures and so we need to restrict our attention to non-loose knots. Let $\mathcal{L}^{nl}(\mathcal{K})$ be the set of non-loose Legendrian knots in the knot type $\mathcal{K}$. Determining $\Phi(\mathcal{L}^{nl}(\mathcal{K}))$ will be called the Legendrian geography problem when $\xi$ is overtwisted, which is what we consider from this point on. Similarly when we discuss the weak botany problem for overtwisted contact manifolds we will only be considering the preimage of $(m,n)$ that lie in $\Phi(\mathcal{L}^{nl}(\mathcal{K}))$ (since by Theorem~\ref{thm:legloose} we know the preimage contains exactly 1 loose Legendrian knot if $n+m$ is odd and none if it is even).

We now summarize what we know about the geography problem for non-loose knots. First, Proposition~\ref{looseLegbound} implies that (assuming the knot under consideration is not the unknot!) the image of $\Phi$ (when restricted to non-loose knots) is contained in the region shown in Figure~\ref{nonlooseregion}. The four black lines $l_1,\ldots, l_4$, are the lines 
\[
\pm\tb(L)\pm \rot(L)= -\chi(\Sigma)
\]
(where all combinations of $\pm$ are considered). The region is broken into 7 subregions, $R_1,\ldots, R_7$ as indicated in the figure, by the four lines. The regions $R_i$ are open regions, when discussing the corresponding closed regions we will of course us the notations $\overline R_i$.

\begin{figure}[ht] 
  \relabelbox{
  \centerline{\epsfbox{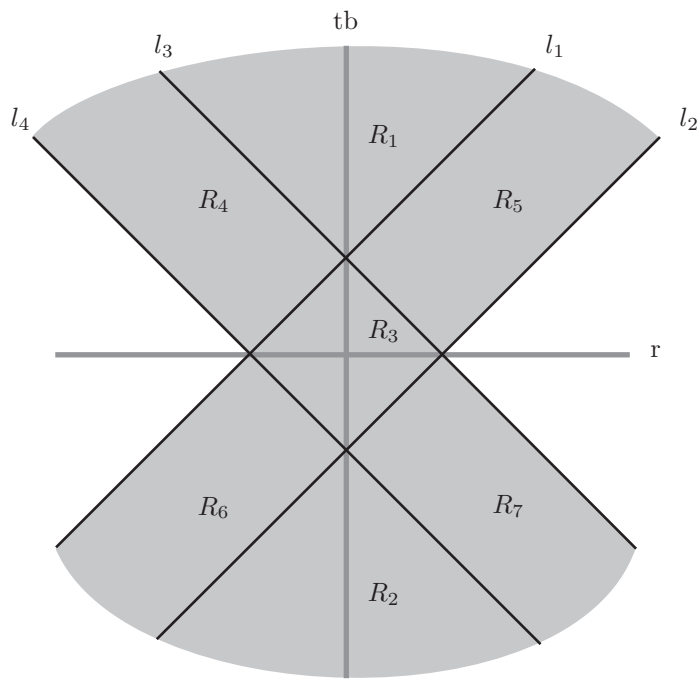}}}
  \relabel{1}{$R_1$}
  \relabel{2}{$R_2$}
  \relabel{3}{$R_3$}
  \relabel{4}{$R_4$}
  \relabel{5}{$R_5$}
  \relabel{6}{$R_6$}
  \relabel{7}{$R_7$}
  \relabel{t}{$\tb$}
  \relabel{r}{$\rot$}
  \relabel{a}{$l_1$}
  \relabel{b}{$l_2$}
  \relabel{c}{$l_3$}
  \relabel{d}{$l_4$}
  \endrelabelbox
  \caption{A non-loose knot must map to the grey region under the map $\Phi$. The black lines, labeled $l_1,\ldots, l_4$, break the grey region into 7 subregions as indicated in the figure. The regions $R_i$ are open regions (that is not including the points on the black lines).}
  \label{nonlooseregion}
\end{figure}   
\begin{quest}
Can there be a non-loose knot with image in $R_1$ or $R_2$?
\end{quest}
While we believe the answer to this question is likely ``yes'', it seems fairly likely that the answer to the next, related, questions is ``no''.
\begin{quest}
Can there be a $(m,n)\in R_1\cup R_2$ with $\Phi^{-1}((m,n))$ infinite?
\end{quest}
We know from Colin, Giroux and Honda \cite{ColinGirouxHonda09} that if you fix a tight contact structure on a manifold, a knot type and two integers, there are finitely many Legendrian knots in the given knot type realizing the integers as their Thurston-Bennequin invariant and rotation number. This seems close to proving the answer to the following question is ``yes''.
\begin{quest}\label{quest3}
Given a knot type $\mathcal{K}$ in a manifold $M$ are there only finitely many overtwisted contact structures on $M$ such that $\mathcal{K}$ can have non-loose representatives?
\end{quest}
In fact the answer to this question would be ``yes'' if the following general question about Legendrian knots could be answered in the affirmative.
\begin{quest}
Given a knot type $\mathcal{K}$ in $M$ is there a number $n$ such that if $L$ is any Legendrian representative in any contact structure on $M$ with $\tb(L)<n$ then $L$ destabilizes?
\end{quest}
Of course this question is also very interesting if one first fixes the contact structure and then asks for the integer $n$. If this more restricted question had a positive answer then the Colin-Giroux-Honda result would imply that for any fixed tight contact structure $\xi$ on a manifold $M$ the Legendrian knots in a given knot type $\mathcal{K}$ would be ``finitely generated'', by which we mean there would be a finite number of non-destabilizable Legendrian knots in $\mathcal{L}(\mathcal{K})$ such that all other elements in  $\mathcal{L}(\mathcal{K})$ would be stabilizations of these. This would also be true in overtwisted contact structures up to Lutz twisting along the knot.

We think it is very likely the answer to the following weaker version of Question~\ref{quest3} is ``yes".
\begin{quest}
Given a knot type $K$ in a manifold $M$ are there only finitely many overtwisted contact structures on $M$ such that $\Phi$ can have infinite preimage at some point? (Maybe even at most two such structures.)
\end{quest}

With the notation established for Figure~\ref{nonlooseregion} we discuss Proposition~\ref{prop:stabilize}. The proposition says that if $L$ is a Legendrian representative with invariants in the region $\overline{R_1\cup R_4}$, then once it is positively stabilized into region $R_5$ or $R_3$ it will be loose. This is surprising given that according to the bounds given in Proposition~\ref{looseLegbound} a knot with invariants in $R_4$ could theoretically be stabilized positively an arbitrary number of times and stay non-loose.

\noindent
{\bf Acknowledgments:} The author thanks Amey Kolati, Lenny Ng, and Bulent Tosun for useful discussions and e-mail exchanges during the preparation of this work. He also thanks Thomas Vogel for agreeing to allow the inclusion of Theorem~\ref{mainS3contacto} that he and the author worked out some years ago. The author also thanks Hansj\"org Geiges and Mirko Klukas for interesting discussions concerning the classification of loose Legendrian knots. The author is also grateful to the referees of this paper who made valuable comments that helped clarify many points in the paper.  This work was partially supported by NSF grant DMS-0804820.

\section{Background concepts}
We assume the reader is familiar with convex surface theory and Legendrian knots, see for example \cite{Etnyre05, EtnyreHonda01b}. For convenience we recall some of the key features of Legendrian and transverse knots used in this paper in Subsection~\ref{revLegTrans}. In the following subsection we recall Eliashberg and Fraser's classification of non-loose Legendrian unknots. We sketch a simple proof of this result that T.~Vogel and the author had worked out, and observe the immediate corollary concerning non-loose transverse unknots. In Subsetction~\ref{gtsec} we recall the definition of Giroux torsion and make several observations necessary for the proofs of our main results. In the last subsection we recall the notion of quasi-convexity introduced in \cite{EtnyreVanHornMorris10}.

\subsection{Neighborhoods of Legendrian and transverse knots}\label{revLegTrans}
Recall that a convex torus $T$ in a contact manifold $(M,\xi)$ will have an even number of dividing curves of some slope. We call the slope of the dividing curves on a convex torus the \dfn{dividing slope} of the torus. If there are just two dividing curves then using the Legendrian realization principle of Giroux we can arrange that the characteristic foliation has two lines of singularities parallel to the dividing curves, these are called \dfn{Legendrian divides}, and the rest of the foliation is by curves of some slope not equal to the dividing slope. These curves are called \dfn{ruling curves} and their slope is called the \dfn{ruling slope}. Any convex torus with such a characteristic foliation will be said to be in standard form. Note that given a torus in standard form we can perturb the foliation to have two closed leaves parallel to the dividing curves and the other leaves spiraling from one closed leaf to the other.  

The regular neighborhood theorem for Legendrian submanifolds says that given a Legendrian knot $L$ in a contact manifold $(M,\xi)$ there is some neighborhood $N$ of $L$ that is contactomorphic to a neighborhood $N'$ of the image of the $x$-axis in $\R^3/(x\mapsto x+1)\cong S^1\times \R^2$ with contact structure $\xi_{std}=\ker(dz-y\, dx)$. By shrinking $N$ and $N'$ if necessary we can assume that $N'$ is a disk in the $yz$-plane times the image of the $x$-axis. It is easy to see, using the model $N'$, that $\partial N$ is a convex torus with two dividing curves of slope $\frac 1n$ where $n=\tb(L)$. Thus we can assume that $\partial N$ is in standard form. Moreover, notice that $L_\pm=\{(x, \pm\epsilon, 0)\}\subset N'$ is a $(\pm)$-transverse curve. The image of $L_+$ in $N$ is called the \dfn{transverse push-off} of $L$ and $L_-$ is called the \dfn{negative transverse push-off}. One may easily check that $L_\pm$ is well-defined and compute that 
\[
\sl(L_\pm)=\tb(L)\mp \rot(L).
\]

We now recall how to understand stabilizations and destabilizations of a Legendrian knot $K$ in terms of the standard neighborhood. Inside the standard neighborhood $N$ of $L$ we can positively or negatively stabilize $L$. Denote the result $S_+(L)$, respectively $S_-(L)$. Let $N_s$ be a neighborhood of the stabilization of $L$ inside $N$. As above we can assume that $N_s$ has convex boundary in standard form. It will have dividing slope $\frac{1}{n-1}$. Thus the region $N\setminus N_s$ is diffeomorphic to $T^2\times[0,1]$ and the contact structure on it is easily seen to be a ``basic slice'', see \cite{Honda00a}. There are exactly two basic slices with given dividing curves on their boundary and as there are two types of stabilization of $L$ we see that the basic slice $N\setminus N_s$ is determined by the type of stabilization done, and vice versa. Moreover if $N$ is a standard neighborhood of $L$ then $L$ destabilizes if the solid torus $N$ can be thickened to a solid torus $N_d$ with convex boundary in standard form with dividing slope $\frac 1{n+1}$. Moreover the sign of the destabilization will be determined by the basic slice $N_d\setminus N$.

Denote by $S_a$ the solid torus $\{(\phi, (r,\theta))| r\leq a\}\subset S^1\times \R^2$, where $(r,\theta)$ are polar coordinates on $\R^2$ and $\phi$ is the angular coordinate on $S^1$, with the contact structure $\xi_{cyl}=\ker (d\phi + r^2\, d\theta)$.
Given a transverse knot $K$ in a contact manifold $(M,\xi)$ one may use a standard Moser type argument to show that there is a neighborhood $N$ of $K$ in $M$ and some positive number $a$ such that $(N,\xi|_\xi)$ is contactomorphic to $S_a$. Notice that the tori $\partial S_b$ inside of $S_a$ have linear characteristic foliations of slope $-b^2$. Thus for all integers with $\frac 1{\sqrt n}<a$ we have tori $T_n=\partial S_{1/\sqrt{n}}$ with linear characteristic foliation of slope $-\frac 1n$. Let $L_n$ be a leaf of the characteristic foliation of $T_n$. Clearly $L_n$ is a Legendrian curve in the same knot type as $T$ and $\tb(L_n)=-n$. Any Legendrian $L$ Legendrian isotopic to one of the $L_n$ so constructed will be called a \dfn{Legendrian approximation} of $K$. 

\begin{lem}[Etnyre-Honda 2001, \cite{EtnyreHonda01b}]\label{legapprox}
If $L_n$ is a Legendrian approximation of the transverse knot $K$ then $(L_n)_+$ is transversely isotopic to $K$. Moreover, $L_{n+1}$ is Legendrian isotopic to the negative stabilization of $L_n$.\qed
\end{lem}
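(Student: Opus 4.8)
\emph{Proposed proof of Lemma~\ref{legapprox}.}
The plan is to do everything inside a model neighborhood $S_a=(S^1\times D^2_a,\xi_{cyl})$, $\xi_{cyl}=\ker(d\phi+r^2\,d\theta)$, produced by the transverse neighborhood theorem, so that $K$ is the Reeb orbit $\{r=0\}$ and $L_n$ is an honest leaf of the characteristic foliation of $T_n=\partial S_{1/\sqrt n}$. For the first assertion I would first place the positive transverse push-off $(L_n)_+$ inside $S_a$: realize a standard Legendrian neighborhood $N'$ of the ruling curve $L_n$ with $\partial N'$ convex of dividing slope $-\frac1n$ sitting inside $S_a$, so that $(L_n)_+$ is a transverse curve in $S_a$; since $L_n$ is smoothly isotopic to the core, $(L_n)_+$ is a transverse knot smoothly isotopic to the core of the tight solid torus $S_a$, exactly as $K$ is. Now I would invoke the uniqueness of transverse knots isotopic to the core of a tight solid torus: two such knots with the same self-linking number (relative to the product framing of $S^1\times D^2$) are transversely isotopic. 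Granting this, it remains to check that $\sl((L_n)_+)$ and $\sl(K)$ agree: the former equals $\tb(L_n)-\rot(L_n)=-n-\rot(L_n)$ by the formula recalled above, and both $\rot(L_n)$ (via the convex torus $T_n$ and its dividing set) and $\sl(K)$ (directly from the contact form $d\phi+r^2\,d\theta$) are elementary to evaluate; matching them, with the paper's sign conventions, gives $(L_n)_+\simeq K$.

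For the second assertion I would exploit the nested family $\{T_n\}$. After a $C^\infty$-small perturbation the $T_n$ are simultaneously convex with dividing slope $-\frac1n$, with $T_{n+1}$ just inside $T_n$. Choose a standard neighborhood $N_n$ of $L_n$ with $\partial N_n$ convex of dividing slope $-\frac1n$ and with $T_{n+1}\subset\Int N_n$, and inside $N_n$ a standard neighborhood $N_{n+1}$ of $L_{n+1}$ with $\partial N_{n+1}$ convex of dividing slope $-\frac1{n+1}$. Then $N_n\setminus\Int N_{n+1}$ is a tight $T^2\times I$ with boundary slopes $-\frac1n$ and $-\frac1{n+1}$, which are joined by an edge of the Farey graph; being cut out of the standard model $S_a$ it is minimally twisting, hence a basic slice. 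By the description of (de)stabilization recalled in Subsection~\ref{revLegTrans}, $L_{n+1}$ is therefore obtained from $L_n$ by a single stabilization whose sign is the sign of this basic slice. To pin the sign, I would compute rotation numbers in the model: fixing a global trivialization of $\xi_{cyl}$ over $S_a$ one finds $\rot(L_{n+1})=\rot(L_n)-1$, and together with $\tb(L_{n+1})=-(n+1)=\tb(L_n)-1$ this forces $L_{n+1}=S_-(L_n)$, since a single stabilization changes $(\tb,\rot)$ by $(-1,+1)$ if positive and by $(-1,-1)$ if negative.

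The main obstacle is the uniqueness statement for transverse knots isotopic to the core of a tight solid torus used in the first part; I would either cite it from Etnyre--Honda, or derive it from Honda's classification of tight contact structures on $T^2\times I$ by writing each transverse representative as the core of a standard sub-solid-torus and identifying the complementary $T^2\times I$'s (the self-linking number pinning down the boundary slopes), or reduce it to Eliashberg's uniqueness of the maximal transverse unknot. Everything else is bookkeeping with convex tori, framings, and rotation numbers in the explicit model $S_a$, and once that uniqueness is available the two assertions follow as sketched.
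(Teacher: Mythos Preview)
The paper does not prove this lemma: it is stated with a terminal \qed and attributed to \cite{EtnyreHonda01b}, so there is nothing in the present paper to compare your argument against.

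That said, your sketch is essentially correct and follows the approach of the original source. For the second assertion, identifying the region between (convex perturbations of) $T_n$ and $T_{n+1}$ as a basic slice and then pinning the sign via the rotation-number computation in a global trivialization of $\xi_{cyl}$ is exactly the standard argument; the only point to make explicit is that the perturbed solid torus $S_{1/\sqrt n}$ is itself a standard neighborhood of (a Legendrian isotopic copy of) $L_n$, which follows from the classification of tight contact structures on solid tori with two longitudinal dividing curves. For the first assertion, the uniqueness of transverse knots smoothly isotopic to the core of a tight solid torus is indeed the crux; it is established in \cite{EtnyreHonda01b} and, as you note, can also be extracted from Honda's classification on $T^2\times I$. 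One small streamlining: rather than introducing a separate neighborhood $N'$ of $L_n$ inside $S_a$, observe that $S_{1/\sqrt n}$ itself, once its boundary is perturbed to be convex with two dividing curves of slope $-1/n$, is already a standard neighborhood of $L_n$; then $K=\{r=0\}$ sits inside this standard neighborhood and the comparison with $(L_n)_+$ takes place there directly.
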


\subsection{Non-loose knots in $S^3$}\label{nlunknotclass}
In this section we recall the following coarse classification of unknots in overtwisted contact structures on $S^3$. To state the theorem we recall that the homotopy class of a plane field $\xi$ on $M$ can be determined by computing two invariants \cite{Gompf98}: the 2-dimensional invariant, which is determined by the $\text{spin}^c$ structure $\mathfrak{s}_\xi$ associated to the plane field (if $H^2(M;\Z)$ has no 2-torsion then $\mathfrak{s}_\xi$  is determined by the Euler class $e(\xi)$ of $\xi$) and the 3-dimensional invariant $d_3(\xi)$. In particular, on the 3--sphere the 2-dimensional invariant of a plane field vanishes and so it is determined by its 3-dimensional invariant $d_3(\xi) \in \Z$. 

\begin{thm}[Eliashberg-Fraser 2009, \cite{EliashbergFraser09}]\label{mainS3contacto}
There is a unique overtwisted contact structure $\xi$ on $S^3$ that contains non-loose Legendrian unknots. This contact structure has $d_3(\xi)=1$ and is shown in Figure~\ref{fig:uknot}. Every Legendrian unknot with tight 
complement has $\tb>0$ and up to contactomorphism there are exactly two such knots with 
$\tb=n>1$, they are distinguished by their rotation numbers which are $\pm(n-1)$, and a unique 
such knot with $\tb=1$.
\end{thm}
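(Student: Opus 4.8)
The plan is to prove Theorem~\ref{mainS3contacto} in three stages: an existence/uniqueness statement for the ambient overtwisted structure, a complete list of the non-loose Legendrian unknots it carries, and the computation of their classical invariants. First I would start from the key structural fact: if $L$ is a non-loose Legendrian unknot with Seifert disk $\Sigma$, then $\chi(\Sigma)=1$, so Proposition~\ref{looseLegbound} forces $-|\tb(L)|+|\rot(L)|\le -1$, and in particular $\tb(L)\ne 0$. I would next argue that $\tb(L)$ cannot be negative: take a standard neighborhood $N$ of $L$; its boundary is a convex torus with two dividing curves of slope $1/\tb(L)$, and the meridian disk of the complementary solid torus hits this slope. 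Using the Legendrian realization principle and the fact that a tight solid torus with convex boundary has a well-understood list of possible bypasses (Honda), I would show that when $\tb(L)<0$ one can always find a bypass on the complementary side that thickens $N$ (equivalently, $L$ destabilizes), and iterating this lands one in the $\tb=0$ case, where the Bennequin bound above is violated unless the complement is overtwisted — i.e.\ $L$ is loose. Hence $\tb(L)\ge 1$, and then the inequality $|\rot(L)|\le \tb(L)-1$ is automatic.

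Second, for the classification itself I would analyze the complement of $N$, which is a solid torus $S$ whose meridian is the Legendrian divide direction on $\partial N$; since $L$ is non-loose, $\xi|_S$ is tight. Honda's classification of tight contact structures on solid tories with prescribed convex boundary then gives a finite list: for dividing slope $1/n$ with $n=\tb(L)>1$ the number of tight structures (up to the relevant equivalence) matching the continued-fraction data works out to exactly two, distinguished by the signs in the basic-slice decomposition, which in turn correspond to the two possible rotation numbers; for $n=1$ the list degenerates to a single structure. Reassembling $S$ with the model neighborhood $N$ reconstructs $(S^3,\xi)$ together with $L$, which simultaneously shows that all these knots live in one and the same contact structure $\xi$ and pins down its homotopy class: compute $d_3$ from the explicit handle/open-book or surgery description coming from the reconstruction (e.g.\ by attaching the Lutz/bypass data to the standard tight $S^3$) and check $d_3(\xi)=1$, the 2-dimensional invariant being automatically trivial on $S^3$. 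The rotation numbers $\pm(n-1)$ follow by pairing the rotation of the ruling/divide curves against the Seifert disk using the basic-slice signs, and $\tb$ is $n$ by construction of $N$.

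The main obstacle I expect is the destabilization step — showing every non-loose Legendrian unknot with $\tb<0$ (or more precisely, every one not already in the listed family) destabilizes, and controlling that this process terminates at a loose knot. This requires a careful bypass analysis in the complementary solid torus: one must rule out the possibility that all bypasses available are ``trivial'' or only shrink $N$, and handle the edge cases where the dividing slope is an integer reciprocal near $0$. The cleanest route is probably to use Honda's convex-surface/bypass machinery together with the observation that the meridional disk of the complement, made convex, must carry a bypass (since $\chi=1$ and the disk is not the one coming from a tight standard neighborhood), and that such a meridional bypass attached from outside to $\partial N$ is exactly a destabilization or thickening of $L$ — forcing $\tb$ upward until the Bennequin bound is saturated, at which point tightness of the complement fails. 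Once this is in place the rest is Honda's solid-torus classification plus a bookkeeping computation of $d_3$ and $\rot$, which is routine.
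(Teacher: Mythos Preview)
Your overall architecture---split $S^3$ into a standard neighborhood $V_1$ of $L$ and a complementary solid torus $V_2$, then invoke Honda's classification of tight contact structures on $V_2$---matches the paper's, and for $\tb(L)=n>0$ the count (two tight structures on $V_2$ when $n>1$, one when $n=1$) is exactly what the paper uses.

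The gap is in your treatment of $\tb(L)<0$. Your plan is to find a bypass on the meridian disk of $V_2$ that thickens $V_1$, iterate, and reach $\tb=0$ where the \'Swi\c{a}tkowski bound gives a contradiction. This stalls at $\tb=-1$: the meridian disk of $V_2$ then meets the dividing set in only two points, so it carries a single dividing arc and produces no genuine bypass (a bypass attaching arc must hit the dividing set in three points). Equivalently, a thickening of $V_1$ to $\tb=0$ would force a convex torus in $V_2$ with meridional dividing slope, which is impossible in a tight solid torus---so the destabilization you need simply does not exist. The paper sidesteps this entirely: for each $n<0$ it observes that the $|n|$ tight contact structures on $V_2$ with boundary slope $n$ are \emph{precisely} the complements of the $|n|$ Legendrian unknots with $\tb=n$ in the standard tight $S^3$; hence gluing $V_1$ back yields the tight $S^3$, contradicting overtwistedness of $\xi$. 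No destabilization is needed.

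Ironically, the destabilization argument you propose is exactly what the paper uses---but for $\tb>0$, and for a different purpose. You are vague about why all the non-loose unknots with various $\tb>0$ live in a single overtwisted structure; the paper handles this inductively by peeling a basic slice off $V_2$ to exhibit each $\tb=n$ knot as a destabilization of a $\tb=n+1$ knot, starting from the unique $\tb=1$ case. This simultaneously shows all of them sit in the same $\xi$ and computes $\rot=\pm(n-1)$ via the signs of the basic slices. Finally, the paper pins down $d_3(\xi)=1$ not by an abstract homotopy computation but by exhibiting an explicit contact surgery diagram (two $(+1)$-surgeries on a Hopf link of $\tb=-1$ unknots) in which the $\tb=1$ non-loose unknot is visible, and applying the standard $d_3$ formula.
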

\begin{figure}[ht] 
  \relabelbox{
  \centerline{\epsfbox{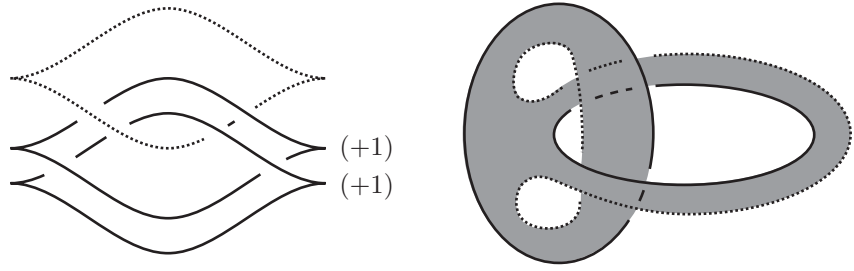}}}
  \relabel{1}{$(+1)$}
  \relabel{2}{$(+1)$}
  \endrelabelbox
  \caption{The two contact surgeries on the left give the contact structure $\xi$ in 
  Theorem~\ref{mainS3contacto}. The dotted Legendrian becomes a Legendrian unknot $L$ with $tb=1$
  in the surgered manifold. The disk $L$ bounds is indicated on the right.}
  \label{fig:uknot}
\end{figure}    
We include a brief proof of this result that the author worked out with Thomas Vogel but never published. This is essentially the same as the proof in \cite{EliashbergFraser09}, though it is couched in somewhat different language and we identify a surgery picture for the minimal $\tb$ example in Figure~\ref{fig:uknot}. Before giving the proof we notice two simple corollaries concerning transverse unknots and non-loose unknots in other contact manifolds. 
\begin{cor}\label{nonlunknot}
Any transverse unknot in any overtwisted contact manifold is loose and hence coarsely determined by its self-linking number.
\end{cor}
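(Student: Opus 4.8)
The plan is to derive this directly from Theorem~\ref{mainS3contacto} together with Proposition~\ref{nltnll} and a localization argument. First I would observe that the statement is local: if $T$ is a transverse unknot in an overtwisted contact manifold $(M,\xi)$, then $T$ bounds a disk $D$ in $M$, and a neighborhood $N$ of $D$ is a ball on which $\xi$ restricts to some contact structure. The point is that ``looseness'' only depends on the contact germ near $T$ together with the rest of the manifold; more precisely, I would argue that $T$ is non-loose in $(M,\xi)$ only if $T$ is non-loose in the contact ball $(N,\xi|_N)$ (a Darboux-type ball enlarged to contain $D$), which after capping off embeds into some contact structure on $S^3$. Concretely, take a small standard ball containing the disk $D$, note the complement $M\setminus T$ contains the complement of $T$ inside this ball, so if the latter is overtwisted so is the former; hence it suffices to show every transverse unknot in every overtwisted contact structure on $S^3$ is loose.

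Next I would reduce the $S^3$ statement to Legendrian knots. By Lemma~\ref{legapprox}, every transverse unknot $K$ in $(S^3,\xi)$ is the transverse push-off $(L)_+$ of some Legendrian approximation $L$, which is a Legendrian unknot in $(S^3,\xi)$. If $K$ were non-loose, then by (the contrapositive of the first half of) Proposition~\ref{nltnll} its Legendrian approximation $L$ would have to be non-loose as well, since Legendrian approximations of non-loose transverse knots are non-loose. So it suffices to analyze the transverse push-offs of non-loose Legendrian unknots in $(S^3,\xi)$.

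Now I would invoke Theorem~\ref{mainS3contacto}: there is exactly one overtwisted $\xi$ on $S^3$ carrying non-loose Legendrian unknots, and for such knots $\tb(L)=n>0$ with $\rot(L)=\pm(n-1)$ when $n>1$ and $\rot(L)=0$ when $n=1$. Using the push-off formula $\sl(L_\pm)=\tb(L)\mp\rot(L)$ from Subsection~\ref{revLegTrans}, the two transverse push-offs of such an $L$ have self-linking numbers $\tb(L)-\rot(L)$ and $\tb(L)+\rot(L)$, which in every case equals either $1$ or $2n-1\geq 1$; in particular the self-linking number is positive. But any transverse unknot bounds a disk $\Sigma$ with $\chi(\Sigma)=1$, so a non-loose transverse unknot would have to satisfy the Bennequin inequality $\sl(K)\leq -\chi(\Sigma)=-1$ by Proposition~\ref{prop:nlsl} — a contradiction. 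Hence no transverse push-off of a non-loose Legendrian unknot is non-loose, so every transverse unknot in $(S^3,\xi)$ is loose, and by the localization step the same holds in any overtwisted contact manifold. Coarse determination by the self-linking number is then immediate from Theorem~\ref{thm:transverseloose}.

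The main obstacle I anticipate is the localization step — making precise that non-looseness of a transverse unknot in $(M,\xi)$ is detected inside a ball neighborhood of a spanning disk, and that this ball embeds (after capping) in an overtwisted $S^3$ in a way compatible with the transverse unknot. One must be careful that enlarging the ball to contain $D$ and then to a standard overtwisted $S^3$ does not accidentally make a previously tight complement overtwisted in a way that spoils the argument; the correct direction of the implication (tight complement in the ball forces non-loose, so loose in the ball forces loose in $M$) needs to be stated carefully, and it may be cleaner to phrase the whole argument purely in terms of the Bennequin inequality of Proposition~\ref{prop:nlsl} applied to the spanning disk, bypassing $S^3$ entirely: a non-loose transverse unknot would need $\sl\leq -1$, but one can show every transverse unknot has a push-off presentation forcing $\sl\geq$ the relevant positive bound, or more simply one observes directly that the standard neighborhood structure of a transverse unknot always produces an overtwisted complement. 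I would present the $S^3$-based argument as the main line and remark on this alternative.
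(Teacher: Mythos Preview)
Your localization step is the real gap, and you have correctly flagged it. Capping off a ball $B$ containing the spanning disk need not produce an \emph{overtwisted} $S^3$: if $T$ were non-loose in $M$, then $B\setminus T\subset M\setminus T$ is tight, and nothing you have said forces $B$ itself to be overtwisted, so Theorem~\ref{mainS3contacto} does not apply to the capped-off sphere. The step can be repaired, but only with Colin's prime-decomposition theorem and a case split: either $M\setminus B$ is overtwisted, in which case $M\setminus T\supset M\setminus B$ is overtwisted and $T$ is loose trivially; or $M\setminus B$ is tight, which forces $B$ to be overtwisted (else $M$ would be tight), and then capping $B$ with a standard ball yields an overtwisted $S^3$ where your argument runs, after which Colin again lets you locate the overtwisted disk in $B\setminus T\subset M\setminus T$. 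Neither of your proposed shortcuts works: Proposition~\ref{prop:nlsl} alone only gives $\sl\leq -1$, which many transverse unknots satisfy, and the assertion that the standard neighborhood of a transverse unknot ``always produces an overtwisted complement'' is plainly false (consider the tight $S^3$).

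The paper avoids localization altogether. It takes a Legendrian approximation $L$ of $T$ and uses Lemma~\ref{legapprox} to note that every negative stabilization of $L$ is again a Legendrian approximation of $T$; since any non-loose Legendrian unknot can be negatively stabilized until it becomes loose (this is what Theorem~\ref{mainS3contacto} together with the prime-decomposition argument of the subsequent corollary gives, as non-loose Legendrian unknots must have $\tb>0$), some Legendrian approximation $L'$ of $T$ is loose. The transverse push-off can be taken inside any $C^\infty$-neighborhood of $L'$, so the overtwisted disk in the complement of $L'$ lies in the complement of $T$ as well. Your self-linking computation in the $S^3$ case is correct but unnecessary: Legendrian approximations already have $\tb<0$, while non-loose Legendrian unknots have $\tb>0$, so the contradiction appears immediately at the level of $\tb$.
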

\begin{proof}
Since any non-loose Legendrian unknot can be negatively stabilized until it is loose, Lemma~\ref{legapprox} implies that any transverse unknot is the transverse push-off of a loose Legendrian. Since the transverse push-off can be done in any $C^\infty$-neighborhood of the Legendrian knot it is clear that the overtwisted disk in the complement of the Legendrian is also in the complement of the transverse knot. Thus any transverse unknot in an overtwisted contact structure is loose. The rest follows from Theorem~\ref{thm:transverseloose}
\end{proof}

\begin{cor}
On a fixed 3-manifold $M$ with fixed $\text{spin}^c$ structure $\mathfrak{s}$
there is an overtwisted contact structure having non-loose 
unknots and associated $\text{spin}^c$ structure $\mathfrak{s}$
if and only if there is a tight contact structure on $M$ with associated $\text{spin}^c$ structure $\mathfrak{s}$. 
Moreover, the number of such overtwisted contact structure associated to $\mathfrak{s}$ is equal to the number of tight contact structures associated to $\mathfrak{s}$.
\end{cor}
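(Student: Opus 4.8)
The plan is to set up a bijection between the set $\mathcal{G}_{\mathfrak{s}}$ of tight contact structures on $M$ with $\text{spin}^c$ structure $\mathfrak{s}$ and the set $\mathcal{O}_{\mathfrak{s}}$ of overtwisted contact structures on $M$ with $\text{spin}^c$ structure $\mathfrak{s}$ that admit a non-loose unknot, realized by cutting out and gluing back a single ``model ball'' extracted from the Eliashberg--Fraser contact structure of Theorem~\ref{mainS3contacto}. To build the model: take $(S^3,\xi_0)$ as in Theorem~\ref{mainS3contacto}, a non-loose Legendrian unknot $L_0$ with spanning disk $D_0$, and an overtwisted disk $\Delta$; since $L_0\cup D_0\cup\Delta$ is a compact $2$--complex there is a Darboux ball $\beta'$ in its complement, and I set $B_0:=S^3\setminus\interior\beta'$. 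Then $B_0$ is a $3$--ball with standard convex boundary sphere, $\xi_0|_{B_0}$ is overtwisted (it contains $\Delta$), $L_0\subset B_0$ is an unknot, and $\xi_0|_{B_0\setminus N(L_0)}$ is tight, being contained in the tight manifold $S^3\setminus N(L_0)$.

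The forward map $\mathcal{G}_{\mathfrak{s}}\to\mathcal{O}_{\mathfrak{s}}$ would be: given tight $\eta$ with $\text{spin}^c(\eta)=\mathfrak{s}$, delete a Darboux ball from $(M,\eta)$ and glue $(B_0,\xi_0|_{B_0})$ in along the matching convex boundary spheres to obtain $\xi$. Since all Darboux balls in a contact manifold are isotopic, $\xi$ is well defined up to isotopy; and since $\xi$ agrees with $\eta$ off a $3$--ball, the $\text{spin}^c$ structures agree (the $2$--dimensional invariant is read off the $2$--skeleton, which can be pushed off the ball; only $d_3$ may change), so $\text{spin}^c(\xi)=\mathfrak{s}$. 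The disk $\Delta\subset B_0$ makes $\xi$ overtwisted, and $M\setminus N(L_0)$ is the union of the tight piece ``$\eta$ minus a ball'' with the tight piece $B_0\setminus N(L_0)$ along a standard convex sphere, hence tight by the standard fact that gluing tight contact manifolds along a convex sphere with connected dividing set stays tight (which follows from Eliashberg's uniqueness of the tight ball together with Colin's gluing theorem). Thus $L_0$ is non-loose in $(M,\xi)$ and $\xi\in\mathcal{O}_{\mathfrak{s}}$. For the backward map $\mathcal{O}_{\mathfrak{s}}\to\mathcal{G}_{\mathfrak{s}}$: given $\xi\in\mathcal{O}_{\mathfrak{s}}$ with non-loose unknot $L$ and disk $D$, set $B:=N(L\cup D)$, a $3$--ball with convex boundary sphere. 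Since $\xi|_{M\setminus B}$ lies inside the tight $\xi|_{M\setminus N(L)}$ it is tight, so $\xi|_B$ must be overtwisted (otherwise the same gluing fact would force $\xi$ tight); capping $B$ off with a tight ball inside a copy of $S^3$ and applying Theorem~\ref{mainS3contacto} identifies $(B,\xi|_B)$ with the model $(B_0,\xi_0|_{B_0})$, while capping $\xi|_{M\setminus B}$ off with the unique tight ball (Eliashberg) produces a tight $\eta$ on $M$ with $\text{spin}^c(\eta)=\mathfrak{s}$. In particular this shows every $\xi\in\mathcal{O}_{\mathfrak{s}}$ is exactly ``$\eta$ with $B_0$ inserted'', so the forward map is onto $\mathcal{O}_{\mathfrak{s}}$.

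It then remains to check that the two maps are mutually inverse on isomorphism classes, which yields the stated equivalence and the equality of cardinalities. Going tight $\to$ overtwisted $\to$ tight obviously returns $\eta$; the other composition, and injectivity of the forward map, reduce to the assertion that the tight structure $\eta$ extracted from $\xi$ does not depend on the choice of non-loose unknot $L$ and spanning disk $D$ used to form $B=N(L\cup D)$. I expect this rigidity statement to be the main obstacle: one must show, in the spirit of the Eliashberg--Fraser argument, that all non-loose unknots of an overtwisted $(M,\xi)$ are confined to a single model ball with tight complement, so that this complement — and hence $\eta$ — is canonical. The remaining ingredients (that modifying a contact structure inside a ball changes $d_3$ but not the $\text{spin}^c$ structure, and that the relevant convex-sphere gluings preserve tightness) are routine but should be recorded carefully. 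Granting the rigidity, the two constructions are mutually inverse bijections between $\mathcal{G}_{\mathfrak{s}}$ and $\mathcal{O}_{\mathfrak{s}}$, which proves the corollary.
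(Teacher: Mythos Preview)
Your approach coincides with the paper's: both arguments amount to the contact connect sum $(M,\eta)\mapsto (M,\eta)\#(S^3,\xi_0)$ with the Eliashberg--Fraser overtwisted sphere, and both recover the tight piece by cutting along a sphere enclosing the unknot and its disk. The paper's entire proof is one sentence: since $M\setminus N(U)$ splits along an $S^2$ as $(M\setminus B^3)\cup (S^3\setminus N(U'))$, Colin's \emph{unique prime decomposition} for tight contact manifolds \cite{Colin97} says that every tight structure on $M\setminus N(U)$ arises, in exactly one way, from a tight structure on $M$ together with a tight structure on $S^3\setminus N(U')$. You already invoke the gluing half of Colin's work (tight $\cup_{S^2}$ tight is tight), but not this uniqueness half, and it is precisely the uniqueness that the paper uses to dispatch the obstacle you isolate.

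Concretely, the rigidity statement you propose---that all non-loose unknots of $(M,\xi)$ are confined to a single model ball with tight complement---is stronger than needed and is not what the paper proves. What Colin's theorem gives is weaker and cleaner: for a \emph{fixed} non-loose unknot $L$, the separating sphere in the tight manifold $M\setminus N(L)$ yields a canonical tight $M$--summand, independent of the choice of sphere. That already makes your backward construction unambiguous once $L$ is chosen, and together with the identification of the ball piece with the unique model from Theorem~\ref{mainS3contacto} it sets up the correspondence the paper asserts. So the fix is not to prove a new rigidity result about where non-loose unknots live, but simply to quote Colin's uniqueness of the tight prime decomposition in place of your conjectural step.
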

\begin{proof}
Let $U$ be an unknot in $M$ and $U'$ be an unknot in $S^3$.
The unique prime decomposition of tight contact manifolds \cite{Colin97} implies that any tight contact structure on
$M\setminus U$ comes from tight contact structures on $M$ and $S^3\setminus U'$. The result follows. 
\end{proof}
We now turn to the classification of non-loose unknots in $S^3$.
\begin{proof}[Proof of Theorem~\ref{mainS3contacto}]
Let $\xi$ be any overtwisted contact structure on $S^3$ and $L$ a Legendrian unknot in $(S^3,\xi)$.
We can decompose $S^3$ as 
\[S^3=V_1\cup_\phi V_2\]
where $V_i=S^1\times D^2$ and $\phi:\partial V_1\to \partial V_2$ is the map given by 
\[\phi=\begin{pmatrix} 0&1 \\ 1&0\end{pmatrix}\]
in meridian-longitude coordinates on each $\partial V_i$. Moreover we can assume that 
$V_1$ is a standard neighborhood of $L$ with convex boundary having dividing slope 
$1/n$ where $n=tb(L)$.
If we assume the complement of $L$ is tight then the contact structure on $V_2$ is tight (of course,
the contact structure on $V_1$ is always tight). The boundary of $V_2$ is convex and has dividing slope
$n$. Clearly $L$, up to contactomorphism, is determined by the contact structure on $V_2$. In \cite{Giroux00, Honda00a} tight contact structures on solid tori were classified and this leads to the following possibilities for tight contact structures on $V_2$. 
If $n<0$ then there are precisely $|n|$ distinct tight contact structures on $V_2$ with the given dividing
curves on the boundary. All of the contact structures are realized as the complement of a $tb=n$,
Legendrian unknot in the standard tight contact structure on $S^3$. Thus they never show up 
as the complement of an unknot in an overtwisted contact structure.  If $n=0$ then $V_2$ has
dividing slope 0 and hence the contact structure must be overtwisted. Thus $n\not=0$ if the complement
of $L$ is tight. Finally, if $n>1$ then there are two tight contact structures on $V_2$ and only one tight
contact structure on $V_2$ when $n=1$. Thus we have shown there is a unique Legendrian knot
in an overtwisted contact structure on $S^3$ that has $tb=1$ and there are at most two when $tb>1$.
We are left to show that these all occur in the same contact structure, that contact structure has
$d_3 = 1$ and all these knots are distinct. We begin by inductively showing that the Legendrian knots
with $tb=n$ are in the same overtwisted contact structure as the knot with $tb=1$. 
To this end let $L$ be the Legendrian unknot with $tb=1$. The unique
tight contact structure on $V_2$ has relative Euler class $e=0$. Thus from \cite{EtnyreHonda01b} 
we see that
$r(L)=0$. Now $V_2$
can be written as $V_2=S\cup N$ where $S$ is as solid torus and $N \cong T^2\times [0,1]$ where
$T^2\times \{1\}=\partial V_2$ and $T^2\times\{0\}$ is glued to $\partial S$. We can arrange that
$\partial S$ is convex with two dividing curves of slope $2$. The contact structure on $N$ is a
basic slice. There are two basic slices and they are distinguished by the sign of the bypass on a
meridional annulus. Moreover, we can realize both signs for $N$ inside $V_2$ as may be easily checked
in a model for $V_2$. Thinking of $N$ as glued to $V_1$
now, instead of $V_2$ we see that $V_1\cup N$ is a solid torus with convex boundary having 
two dividing curves of slope $\frac {1}{2}$. This is a standard neighborhood of a Legendrian 
unknot $L'$ with $tb=2$. Moreover $L$ is a stabilization of $L'$
and which stabilization it is depends on the sign of the basic slice $N$, see Section~\ref{revLegTrans} above.
Thus if $L$ is in the contact 
structure $\xi$ then there are two Legendrian unknots $L_+$ and $L_-$ in $\xi$ such that 
$S_\mp(L_\pm)=L$. So $tb(L_\pm)=2$ and $r(L_+)=1=-r(L_-)$ and we see that $L_+$ and $L_-$ are
distinct Legendrian knots and they exist in the same contact structure as $L$. 

Now suppose we have shown that the Legendrian unknots with $tb\leq n$ and tight complements all
exist in the same overtwisted contact structure and satisfy $r(L)=\pm (tb(L)-1)$. Consider
a Legendrian unknot $L$ with $tb=n$. We can decompose $V_2$ as above, but with $\partial S$
having dividing slope $n+1$. Now $N$ is again a basic slice, but only one basic slice can be a
subset of $V_2$. Using the relative Euler class discussed in 
\cite{EtnyreHonda01b} one may easily see the sign of the bypass determining
the contact structure on $N$ agrees with the sign of $r(L)$. For the rest of the argument we assume the
sign is positive.
As above, $V_1\cup N$ is a neighborhood of a Legendrian unknot $L'$ with
$tb(L')=n+1$ and $S_-(L')=L$. (Note the sign of the bypass switches since we turned $N$ upside down.)
Thus $r(L)=r(L')-1$ and $r(L')=r(L)+1=(n-1)+1=n$.

We are left to identify the contact structure containing the unknot with $tb=1$. This is shown in 
Figure~\ref{fig:uknot}. One may easily compute
\[d_3(\xi)=\frac 14 (c^2(X) - 3\sigma(X) - 2\chi(X))+2=1,\]
where $X$ is the bordism from $S^3$ to $S^3$ given in the figure, $\sigma(X)=0$ its the signature of $X$, $\chi(X)=2$ its Euler
characteristic of $X$ and $c^2=0$ is the square of the ``Chern class'' of the (singular) almost complex structure on
$X$. (For a discussion of this formula see \cite{DingGeigesStipsicz04}.) 
If $L$ is the knot indicated in the figure, then 
Legendrian surgery on $L$ cancels one of the $+1$-contact surgeries in the figure. Thus the 
resulting manifold is the contact manifold obtained by $+1$-contact surgery on the $tb=-1$ unknot
in the standard tight contact structure on $S^3$. This is well known to be the tight contact structure on
$S^1\times S^2$ (see \cite{DingGeigesStipsicz04}). Thus the complement of $L$ (which is a subset of the tight contact
structure on $S^1\times S^2$) is clearly tight. 
Moreover, in Figure~\ref{fig:uknot} we see a disk $L$ bounds in
the surgered manifold, indicating that $L$ is an unknot. This disk gives $L$ a framing that is 2
less than the framing given by a disk in $S^3$ before the surgeries. Thus we see that the 
contact framing with respect to this disk is $1=-1+2$.
\end{proof}

\subsection{Giroux torsion}\label{gtsec}

Given a contact manifold $(M,\xi)$ and an isotopy class of tori $[T]$ in $M$ we define the \dfn{Giroux torsion of $(M,\xi)$ in the isotopy class $[T]$}, denoted $\tor((M,\xi),[T])$,  to be the maximum natural number $n$ such that there exists a contact embedding $\phi : (T^2 \times [0,1], \zeta_{n}) \to (M,\xi)$, where $\phi(T^2\times\{0\})\in [T]$ and the contact structure $\zeta_{n}$ on $T^2 \times [0,1]$ (thought of as $\mathbb{R}^2/\mathbb{Z}^2 \times [0,1]$) is given by $\zeta_{n} = \ker(\sin(2n\pi t)\, dx + \cos(2n\pi t)\, dy)$.  We sometimes refer to ``half--Giroux'' torsion when $T^2\times[0,1/2]$, with the above contact structure, can be contact embedded in $(M,\xi)$.

The \dfn{Giroux torsion of $(M,\xi)$} is the maximal Giroux torsion taken over all isotopy classes of tori in $M$. Recall that the Giroux torsion of $M$ is infinite in all isotopy class of tori if $\xi$ is overtwisted. It is unknown if the Giroux torsion is finite when $\xi$ is tight, though this is frequently the case \cite{Colin01a}. We make a simple observation about Giroux torsion that we will need below.

\begin{lem}\label{lem:torsion2}
Let $(M,\xi)$ be a contact manifold with boundary a torus $T$ and $[T]$ the isotopy class of $T$. Assume the characteristic foliation on $\partial M$ is linear. Then 
\[
\tor ((M,\xi),[T])=\tor((\interior M, \xi|_{\interior M}),[T]),
\]
where $\interior M$ denotes the interior of $M$.
\end{lem}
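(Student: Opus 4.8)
The plan is to prove the two inequalities $\tor((M,\xi),[T])\le \tor((\interior M,\xi|_{\interior M}),[T])$ and the reverse separately; the first is immediate and the second is the content of the lemma. For the first inequality, suppose $\phi:(T^2\times[0,1],\zeta_n)\to (M,\xi)$ is a contact embedding with $\phi(T^2\times\{0\})\in[T]$. After a small isotopy we may push the image off the boundary $\partial M$ (using the collar neighborhood $T\times[0,\epsilon)$ given by the linear characteristic foliation hypothesis, which guarantees a well-understood model of $\xi$ near $\partial M$), producing a contact embedding into $\interior M$ whose $T^2\times\{0\}$ slice is still isotopic to $T$. Hence $\tor((M,\xi),[T])\le\tor((\interior M,\xi|_{\interior M}),[T])$.

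For the reverse inequality, I would start with a contact embedding $\psi:(T^2\times[0,1],\zeta_n)\to(\interior M,\xi|_{\interior M})$ with $\psi(T^2\times\{0\})\in[T]$ (as an isotopy class of tori in $M$, hence also in $\interior M$). The point is to convert this into an embedding into $M$ without shrinking $n$. First I would arrange, by an ambient isotopy of $M$ fixing a neighborhood of $\partial M$, that $\psi(T^2\times\{0\})$ is actually a fixed convex (or standard-form) torus $T'$ parallel to and close to $\partial M$; this uses that $\psi(T^2\times\{0\})$ is isotopic to $\partial M$ in the isotopy class $[T]$ and that isotopic embeddings of tori are ambient isotopic. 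After this, the region between $\psi(T^2\times\{0\})$ and $\partial M$ is a collar $T^2\times[-\delta,0]$, and since the characteristic foliation on $\partial M$ is linear we know the contact structure on this collar explicitly (it is a rotationally invariant $T^2$-invariant piece with no Giroux torsion). Gluing this collar onto the front of the $\zeta_n$-embedding extends $\psi$ to a contact embedding of $T^2\times[-\delta,1]\cong T^2\times[0,1]$ carrying $\zeta_n$ (the collar contributes no twisting, so the total twisting is still exactly $n$) into $M$ with its $T^2\times\{-\delta\}$ slice equal to $\partial M$, in particular in $[T]$. This gives $\tor((\interior M,\xi|_{\interior M}),[T])\le\tor((M,\xi),[T])$.

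The main obstacle is the isotopy step: I must justify that the near-boundary slice $\psi(T^2\times\{0\})$ can be ambient-isotoped (rel a neighborhood of $\partial M$) to a torus parallel to $\partial M$ so that the region between it and $\partial M$ really is a product collar carrying a torsion-free invariant contact structure, and that throughout this process the contact embedding of the $\zeta_n$-block persists (equivalently, that I have not accidentally pushed part of the block out of $M$ or created extra twisting). The cleanest way to handle this is to use that the characteristic foliation on $\partial M$ is linear to put a standard neighborhood $T^2\times[0,\epsilon]$ of $\partial M$ with a $\zeta_0$-type (non-twisting) invariant contact structure, to note that $\psi$'s image lies in $\interior M$ and so can be assumed disjoint from a slightly smaller collar, and then to use uniqueness of tight (or at least of this explicit invariant) contact structure on the intervening $T^2\times I$ region to recognize the glued-up region as $\zeta_n$ up to a collar, invoking the standard fact that attaching a torsion-$0$ invariant layer to a $\zeta_n$ layer yields a $\zeta_n$ layer. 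All of this is routine convex-surface bookkeeping once the boundary model is fixed, so the lemma follows.
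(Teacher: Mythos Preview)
You have the two inequalities swapped. The inequality $\tor((\interior M,\xi|_{\interior M}),[T])\le\tor((M,\xi),[T])$ is the trivial one: any contact embedding of $(T^2\times[0,1],\zeta_n)$ into $\interior M$ is already a contact embedding into $M$, and a torus parallel to $\partial M$ in $\interior M$ is certainly isotopic to $\partial M$ in $M$. So your entire ``reverse inequality'' paragraph is proving something that requires no argument; in particular there is no need to move $\psi(T^2\times\{0\})$ near the boundary, and your claims there are in any case unjustified (the ambient isotopy you invoke need not be a contact isotopy, and the region between $\psi(T^2\times\{0\})$ and $\partial M$ has no reason to carry a torsion-free invariant structure just because its two boundary tori have linear foliations).

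The actual content is the inequality you call ``immediate,'' namely $\tor((M,\xi),[T])\le\tor((\interior M,\xi|_{\interior M}),[T])$: given $\phi:(T^2\times[0,1],\zeta_n)\hookrightarrow(M,\xi)$, produce one landing in the interior. Your one-line sketch (``push the image off $\partial M$ using a collar'') is the right instinct but does not work as stated: the image of $\phi$ may fill up most of $M$, and there is no contact isotopy of $M$ that moves $\partial M$. The paper's trick works from the \emph{other} end of the block. Since $M$ has a single boundary torus, at least one of $\phi(T^2\times\{0\})$, $\phi(T^2\times\{1\})$ lies in $\interior M$; say it is $\phi(T^2\times\{1\})$. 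Because this torus has a bicollar in $\interior M$ and the contact structure near it is the standard linear model, $\phi$ extends to a contact embedding $\widetilde\phi:(T^2\times[0,1+\epsilon],\ker(\sin(2n\pi t)\,dx+\cos(2n\pi t)\,dy))\to(M,\xi)$. By the $t$-periodicity of this contact form, $\widetilde\phi|_{T^2\times[\delta,1+\delta]}$ for any small $\delta>0$ is again a contact embedding of $(T^2\times[0,1],\zeta_n)$, and its image now lies in $\interior M$. That is the missing step.
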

\begin{proof}
The inequality $\tor ((M,\xi),[T])\geq \tor((\interior M, \xi|_{\interior M}),[T])$ is obvious. For the other inequality assume we have the contact embedding $\phi : (T^2 \times [0,1], \zeta_{n}) \to (M,\xi)$. One of the boundary components of $T^2\times[0,1]$ is mapped into the interior of $M$ (recall $M$ only has one boundary component and it is in the isotopy class $[T]$). Assume that it is $T^2\times\{1\}$ that maps into the interior (the other case being similar). We can extend $\phi$ to an embedding of $T^2\times [0,1+\epsilon]$ for some $\epsilon$, where the contact structure on $T^2\times [0,1+\epsilon]$ is given by $\ker(\cos(2n\pi t) dx + \sin(2n\pi t) dy)$. From this we easily find a contact embedding of $(T^2 \times [0,1], \zeta_{n})$ into the interior of $M$.
\end{proof}

\subsection{Quasi-compatibility}\label{qcsection}

In \cite{EtnyreVanHornMorris10} the notion of quasi-compatibility was introduced.  Let $\xi$ be an oriented contact structure on a closed, oriented manifold $M$ and $(L, \Sigma)$ an open book for $M$.  We say $\xi$ and $(L, \Sigma)$ are {\em quasi-compatible} if there exists a contact vector field for $\xi$ which is everywhere positively transverse to the fibers of the fibration $(M\setminus L)\to S^1$ and positively tangent to $L$.

One can construct contact structure quasi-compatible with an open book using a slight modification of the standard construction of compatible contact structures. Specifically, given the open book $(L,\Sigma)$ we notice that $M-L$ is the mapping torus of some diffeomorphism $\phi:\Sigma\to\Sigma$, where $\phi$ is the identity map near $\partial \Sigma$. According to \cite{Giroux91}, given any collection of closed curves $\Gamma$ on $\Sigma$ that divide $\Sigma$ (in the sense of dividing curves for a convex surface) and are disjoint from $\partial \Sigma$, we can construct an $\R$-invariant contact structure $\xi$ on $\Sigma\times \R$ that induce the curves $\Gamma$ as the dividing curves on $\Sigma\times\{t\}$ for any $t\in \R$. If $\phi(\Gamma)$ is isotopic to $\Gamma$ then we can find (after possibly isotoping $\phi$) a negative function $h:\Sigma\to \R$ such that the top and bottom of the region between $\Sigma\times\{0\}$ and the graph of $h$ can be glued via $\phi$ so that $\xi$ restricted to this region induces a contact structure on $M-L$. After slightly altering this contact structure near $L$ we can then extend this contact structure over $L$ in the standard manner. This gives a contact structure $\xi$ that is quasi-compatible with $(L,\Sigma)$ (and induces the given $\phi$ invariant dividing curves on all the pages). 

One of the main technical results of \cite{EtnyreVanHornMorris10} was the following. 
\begin{thm}[Etnyre and Van Horn-Morris 2010, \cite{EtnyreVanHornMorris10}]\label{thm:maxclass}
Let $(B, \Sigma)$ be a fibered transverse link in a contact 3-manifold $(M, \xi)$ and assume that $\xi$ is tight when restricted to $M\setminus B$.  If $sl_\xi{(B,\Sigma)} = -\chi(\Sigma)$, then $\xi$ is quasi-compatible with $(B,\Sigma)$ and either
\begin{enumerate}
\item $\xi$ is supported by $(B, \Sigma)$ or
\item $\xi$ is obtained from $\xi_{(B,\Sigma)}$ by adding Giroux torsion along tori which are incompressible in the complement of $B$.  \qed
\end{enumerate}
\end{thm}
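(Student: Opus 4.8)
The plan is to pass to the complement of a standard neighborhood of $B$, use the extremal self-linking hypothesis together with convex surface theory to put this complement into a normal form, and then cut along a page to identify the contact structure explicitly; reassembling recovers the supported contact structure possibly modified by Giroux torsion, while the normal form exhibits a page-transverse contact vector field. First I would remove a standard neighborhood $N$ of the transverse link $B$ (Subsection~\ref{revLegTrans}), so that $M_0 = M\setminus\interior N$ is a compact manifold with torus boundary fibering over $S^1$ with fiber the page $S\cong\Sigma$ and monodromy $\phi$ equal to the identity near $\partial S$; by hypothesis $\xi|_{M_0}$ is tight. Perturb $\partial M_0$ to be convex in standard form. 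Since $sl_\xi(B,\Sigma)=-\chi(\Sigma)$ is the maximal possible value (Proposition~\ref{prop:nlsl}), translating this through the relation between the self-linking number of a transverse knot and the dividing data of a convex torus bounding a neighborhood lets me arrange $\partial M_0$ to have exactly two dividing curves, of slope the page (Seifert) framing.

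Next I would use the extremal condition to pin down a convex page. Isotope a fiber of $M_0$ to a convex surface $\widehat S$ with Legendrian boundary along ruling curves of $\partial M_0$, and express the relative Euler class of $\xi|_{M_0}$ evaluated on $\widehat S$ in terms of the Euler characteristics of the positive and negative regions cut out by the dividing set $\Gamma_{\widehat S}$. Combined with a Bennequin-type inequality for $\xi|_{M_0}$, the maximality of $sl_\xi(B,\Sigma)$ forces these Euler characteristics to their extreme values; together with the tightness of $\xi|_{M_0}$ and Giroux's criterion (no contractible closed dividing curve on a convex surface in a tight manifold) this forces $\Gamma_{\widehat S}$ to be the unique minimal configuration --- no closed components, and exactly one boundary-parallel arc for each boundary component of $\widehat S$ --- which is precisely the dividing set that a page of the supporting open book carries as a convex surface.

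Then I would cut $M_0$ along $\widehat S$, obtaining a tight contact structure on $S\times[0,1]$ with convex boundary: $S\times\{0\}$ and $S\times\{1\}$ carry the minimal dividing set above, and the remaining boundary consists of convex annuli lying in $\partial M_0$ with page-sloped dividing arcs. The heart of the argument is to classify such structures. I would decompose $S\times[0,1]$ along vertical convex surfaces --- rectangles over properly embedded arcs in $S$ and annuli over essential closed curves in $S$ --- into tight pieces, namely balls (where the structure is unique, by Eliashberg) and thickened tori (classified by Honda and Giroux, where Giroux torsion resides), and then reassemble. The outcome should be that, up to isotopy rel boundary, every such structure is the $[0,1]$-invariant model with finitely many half- or full-Giroux-torsion layers inserted along vertical tori $c\times[0,1]$, for essential curves $c\subset S$ fixed up to isotopy by $\phi$, and possibly along a collar of $\partial M_0$. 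Regluing by $\phi$, the invariant model produces the supported contact structure $\xi_{(B,\Sigma)}$, while the models with torsion layers produce $\xi_{(B,\Sigma)}$ with Giroux torsion added along the closed tori $c\times S^1$ and the $\partial N$-parallel torus, all of which are incompressible in $M\setminus B$. In every case the contact vector field of the invariant model pointing along the base $S^1$ direction --- arranged to remain a contact vector field across the inserted torsion layers --- is positively transverse to the pages and positively tangent to $B$, so $\xi$ is quasi-compatible with $(B,\Sigma)$.

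The main obstacle I expect is the classification of tight contact structures on $S\times[0,1]$ with the prescribed minimal boundary data when $S$ has positive genus and several boundary components: the gluing technology of Honda and Giroux is cleanest for toric annuli and for $\Sigma\times S^1$, so here one must organize the cut-and-reassemble argument carefully and, crucially, keep honest bookkeeping of which bypass or basic-slice attachments are absorbed by an ambient isotopy and which genuinely record Giroux torsion. Ruling out a tight structure that hides extra complexity invisible on the boundary, and confirming that the only incompressible tori that can pick up torsion are the vertical ones and the boundary-parallel one, is the delicate point; producing the quasi-compatible contact vector field uniformly and verifying it matches across all the cuts and the monodromy gluing requires care but should be routine once the normal form is in hand.
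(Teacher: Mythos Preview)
This theorem is not proved in the present paper; it is quoted from \cite{EtnyreVanHornMorris10} as a black box. The paper does, however, invoke Lemmas~3.3--3.5 of \cite{EtnyreVanHornMorris10} in the proof of Theorem~\ref{thm:lclassify}, and those indicate how the actual argument runs, so there is something to compare against.

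Your outline has a genuine gap: the claim that the extremal self-linking condition together with tightness and Giroux's criterion forces the page dividing set $\Gamma_{\widehat S}$ to be the unique minimal configuration, with no closed components, is false precisely in the cases the theorem is meant to capture. Pairs of parallel essential closed curves bounding an annulus in $\widehat S$ contribute nothing to $\chi(\widehat S_+)-\chi(\widehat S_-)$, are not contractible, and so survive all of your constraints; they are exactly what encodes Giroux torsion along the corresponding incompressible torus, and when such torsion is present no isotopy of the page can remove them (compare Lemma~\ref{lem:bound} of this paper for the boundary-parallel case). Hence after cutting you will not in general have the minimal boundary data on $S\times[0,1]$ that your classification step assumes. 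The argument in \cite{EtnyreVanHornMorris10} proceeds differently: the page boundary is kept transverse rather than Legendrian, the equality $\sl=-\chi$ is used to arrange that the page carries no \emph{negative} singularities (so $\Sigma_-$ is a union of annuli --- closed dividing curves are allowed, not excluded), and then the dividing set is shown to be invariant under the monodromy. That invariance directly produces the page-transverse contact vector field giving quasi-compatibility (via the construction recalled in Subsection~\ref{qcsection}), and the annuli of $\Sigma_-$ record the Giroux torsion on the page itself rather than hidden in a product after cutting.
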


\section{Observations about non-loose knots}
Though proven in \cite{Dymara??} we give a quick proof of Proposition~\ref{looseLegbound} for the convenience of the reader. Though essentially the same as the proof given in \cite{Dymara??} it uses quite different language.
\begin{proof}[Proof of Proposition~\ref{looseLegbound}]
Notice that the inequaitly to be proved is equivalent to 
\begin{align*}
    \tb(L) \pm \rot(L) \leq -\chi (L)  \quad& \text{if $\tb(L)\leq 0$}, \\
    -\tb(L) \pm \rot(L) \leq -\chi(L)  \quad & \text{if } \tb(L)>0.
\end{align*}
To establish this let $N$ be a standard convex neighborhood of $L$ with ruling slope 0 (that is, given by the Seifert framing) and $L'$ a ruling curve on $\partial N$.
Clearly $L'$ is null-homologous in the complement of $L$. The dividing curves on $\partial N$ have slope $\frac 1{\tb(L)}$ thus
twisting of $L'$ with respect to $\partial N$ is $-|\tb(L)|$; however, since the framing on $L'$ induced by the Seifert surface for $L'$ and
by $\partial N$ are the same we also see that $\tb(L')=-|\tb(L)|$. Checking that $\rot(L)=\rot(L')$ one obtains the desired inequalities from the Bennequin inequality applied to $L'$ in $(M-N)$. To see that $\rot(L)=\rot(L')$ we can trivialize the contact planes in the neighborhood $N$ by extending the unit tangent vector to $L$ to a non-zero section of $\xi$. Then any Legendrian longitude on $\partial N$ that is oriented in the same direction as $L$ clearly was winding zero with respect to this trivialization and thus the rotation numbers of this longitude and $L$ will agree. 
\end{proof}

We are now ready to establish Proposition~\ref{nltnll} that explains the relation between a non-loose transverse or Legendrian knot and it Legendrian approximations or, respectively, transverse push-offs. 
\begin{proof}[Proof of Proposition~\ref{nltnll}]
For the first statement let $L$ be a Legendrian knot such that its positive transverse push-off  $L_+$ is transversely isotopic to $T$. If there is an overtwisted disk $D$ in the complement of $L$ then $D$ is in the complement of some small neighborhood of $L$. Since $L_+$ can taken to be in any neighborhood of $L$ we see that $D$ is in the complement of $L_+$. Thus extending the transverse isotopy of $L_+$ to $T$ to a global contact isotopy we can move $D$ to an overtwisted disk in the complement of $T$.

To prove the second statement in the theorem we note, Corollary~\ref{nonlunknot}, that all transverse unknots in an overtwisted contact structure are loose, but by Theorem~\ref{mainS3contacto} we know there are non-loose Legendrian unknots in a particular contact structure on $S^3$. Thus the transverse push-off of a non-loose Legendrian knot does not need to be non-loose. On the other hand, the proof of Theorem~\ref{thm:lclassify} shows that all the non-loose Legendrian knots considered in that theorem, have non-loose transverse push-offs. 
\end{proof}

The proof of Proposition~\ref{prop:nlsl} is a fairly easy consequence of the previous two proofs. 
\begin{proof}[Proof of Proposition~\ref{prop:nlsl}]
Let $K$ be a transverse knot with $\sl(K)>-\chi(\Sigma)$ and let $L$ be a Legendrian approximation of $K$, so $\tb(L)-\rot(L)>-\chi(\Sigma)$. Notice that if $\tb(L)\leq 0$ then it does not satisfy the bound given in Proposition~\ref{looseLegbound}  and thus $L$ is loose and by the proof of  Proposition~\ref{nltnll} we see that $K$ is loose as well. By Lemma~\ref{legapprox} all negative stabilizations of $L$ will be Legendrian approximations of $K$. Since after stabilizing enough times we can assume that $L$ has non-positive Thurston-Bennequin invariant we see that $K$ must be loose. 
\end{proof}

\section{Loose knots}\label{sec:loose}
In this section we explore the homotopy theory of plane fields in the complement of Legendrian knots. More generally, we study homotopy classes of plane fields on manifolds with boundary. We assume the reader is familiar with the Pontryagin--Thom construction on a closed manifold and its implications for classifying homotopy classes of plane fields on a 3--manifold, see for example \cite{DingGeigesStipsicz04, Gompf98}. We end this section by proving Theorems~\ref{thm:legloose} and~\ref{thm:transverseloose} concerning the coarse classification of loose Legendrian and transverse knots. 
\subsection{Homotopy classes of plane fields on manifolds with boundary}

We begin by recalling the Pontryagin--Thom construction in the context of 3--manifolds with boundary. Let $M$ be an oriented 3--manifold with boundary. The space of oriented plane fields on $M$ is denoted $\mathcal{P}(M)$ and if one is given a plane field $\eta$ along $\partial M$ then the set of oriented plane fields that extend $\eta$ to all of $M$ will be denoted $\mathcal{P}(M,\eta)$. Similarly we denote the space of unit vector fields on $M$ by $\mathcal{V}(M)$ and the set of unit vector fields extending a given unit vector field $v$ on the boundary is denoted by $\mathcal{V}(M,v)$. All of these spaces can be topologized as a space of sections of an appropriate bundle. Also notice that $\mathcal{P}(M,\eta)$ and $\mathcal{V}(M,v)$ might be empty depending on $\eta$ and $v$.

Choosing a metric on $M$ we can identify oriented plane fields in $M$ with unit vector fields on $M$ by sending a plane field $\xi$ to the unit vector field $v$ such that $v$ followed by an oriented basis for $\xi$ orients $TM$. Thus a choice of metrics identifies the following spaces
\[
\mathcal{P}(M)\cong \mathcal{V}(M),\quad \text{and} \quad \mathcal{P}(M,\eta)\cong \mathcal{V}(M,v),
\]
where $v$ is the unit vector field along the boundary of $M$ associated to $\eta$ by the metric and orientation.

It is well known that the tangent bundle of $M$ is trivial. Fixing some trivialization, we identify
$TM=M\times \R^3$ and the unit tangent bundle of $M$ with $M\times S^2$. Using these identifications we can identify the space of unit vector fields $\mathcal{V}(M)$ and the space of smooth functions from $M$ to $S^2$, which we denote by $Maps(M,S^2)$. Similarly if the vector field $v$ along $\partial M$ corresponds to the function $f_v$ then $\mathcal{V}(M,v)$ can be identified with the space of smooth functions from $M$ to $S^2$ that agree with $f_v$ on $\partial M$, which we denote by $Maps(M,S^2; f_v)$.

We now assume that $f_v:\partial M\to S^2$ misses the north pole of $S^2$ (and hence is homotopic to a constant map, which we know must happen if $\mathcal{V}(M,v)\not=\emptyset$). 
Now given an element $f\in Maps(M,S^2; f_v)$ we can homotope $f$ so that it is transverse to the north pole $p$. Then $L_f=f^{-1}(p)$ is a link contained in the interior of $M$. Moreover we can use $f$ to give a framing ${\bf f}_f$ to $L_f$. As $f$ homotopes through maps in $Maps(M,S^2; f_v)$, the framed link $(L_f, {\bf f}_f)$ changes by a framed cobordism. Thus to any homotopy class of vector field extending $v$ we can assign a well-defined framed cobordism class of framed links contained in the interior of $M$. The standard proof of the Pontryagin--Thom construction in the closed case easily extends to show this is actually a one-to-one correspondence. This establishes the following relative version of the Pontryagin--Thom construction. 
\begin{lem}\label{relative-tp}
Assume that $\eta$ is a plane field defined along the boundary of $M$ that in some trivialization of $TM$ corresponds to a function that misses the north pole of $S^2$ (as discussed above). There is a one-to-one correspondence between homotopy classes of plane fields on $M$ that extend  $\eta$ on $\partial M$ and the set of framed links in the interior of $M$ up to framed cobordism. \qed
\end{lem}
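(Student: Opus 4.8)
The plan is to carry out the classical Pontryagin--Thom argument ``rel boundary,'' invoking the hypothesis that $f_v$ misses the north pole $p\in S^2$ at precisely the points where the closed-manifold proof would use the boundary being empty. Throughout I fix, as in the discussion preceding the lemma, a metric and a trivialization $TM\cong M\times\R^3$, so that $\mathcal P(M,\eta)\cong\mathcal V(M,v)\cong Maps(M,S^2;f_v)$, and I work entirely in the last model together with a fixed oriented basis of $T_pS^2$ (a framing of $p$), which converts transversality data into framings of links and back.

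First I would check that the assignment $f\mapsto(L_f,{\bf f}_f)$ descends to homotopy classes rel $\partial M$. Given $f\in Maps(M,S^2;f_v)$, since $f|_{\partial M}=f_v$ avoids $p$, a small generic homotopy rel $\partial M$ makes $f$ transverse to $p$, so $L_f=f^{-1}(p)\subset\Int M$ carries a well-defined pulled-back framing; two transverse representatives of the same class rel $\partial M$ are joined by a homotopy $F:M\times[0,1]\to S^2$ that is constant $=f_v$ on $\partial M\times[0,1]$, and because $F$ misses $p$ there it can be perturbed to be transverse to $p$ rel $(M\times\{0,1\})\cup(\partial M\times[0,1])$, whereupon $F^{-1}(p)$ is a framed cobordism in $\Int M\times[0,1]$. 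This yields a well-defined map from homotopy classes of plane fields extending $\eta$ to framed cobordism classes of framed links in $\Int M$.

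For surjectivity, given a framed link $(L,{\bf f})$ in $\Int M$ I would build a representative $f$ by hand: since $S^2\setminus\{p\}$ is contractible and $\partial M\hookrightarrow M$ is a cofibration, extend $f_v$ to a map $g:M\to S^2\setminus\{p\}$; then modify $g$ only on a tubular neighborhood $N\cong L\times D^2$ (cut out by the framing, disjoint from $\partial M$), replacing it there by $(\ell,x)\mapsto c(x)$ for the standard collapse $c:D^2\to S^2$ with $c(0)=p$, $c\equiv -p$ near $\partial D^2$, and $c|_{\Int D^2}$ an (orientation-correct) diffeomorphism onto $S^2\setminus\{-p\}$, blending back into $g$ inside $S^2\setminus\{p\}$ near $\partial N$. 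Then $f=f_v$ on $\partial M$ and $f^{-1}(p)=L$ with framing ${\bf f}$. For injectivity I would \emph{normalize} first: for $f$ transverse to $p$, a standard local straightening near $L_f$ (untouched by $\partial M$) homotopes $f$, rel the complement of a neighborhood of $L_f$, to a map equal to the model $c$ on a tubular neighborhood $N$; off $N$ both $f$ and the standard model $\widehat f$ built as in the surjectivity step map $M\setminus N$ into $S^2\setminus\{p\}$ with identical restrictions to $\partial M\cup\partial N$, hence are homotopic rel that boundary because $S^2\setminus\{p\}$ is contractible (the space of maps $M\setminus N\to S^2\setminus\{p\}$ with prescribed boundary values is nonempty and connected). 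Thus $f\simeq\widehat f$ rel $\partial M$. Finally, a framed cobordism $W\subset\Int M\times[0,1]$ between $(L_{f_0},{\bf f}_{f_0})$ and $(L_{f_1},{\bf f}_{f_1})$, fed into the same collapse construction, produces a homotopy rel $\partial M$ from $\widehat f_0$ to $\widehat f_1$; combined with normalization this gives $f_0\simeq f_1$ rel $\partial M$.

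The main obstacle --- indeed essentially the only place the argument departs from the closed case --- is keeping every step (the extension of $f_v$, the transversality perturbations, the straightening near the preimage, the interpolations near $\partial N$ and the normalization off $N$) \emph{relative to $\partial M$}: each requires sliding maps around near $\partial M$ inside $S^2\setminus\{p\}$ without ever hitting $p$, which is exactly what the hypothesis that $f_v$ misses the north pole --- equivalently that $S^2\setminus\{p\}\cong\R^2$ is contractible --- provides. Once the bookkeeping of ``rel $\partial M$'' is organized, the interior content of the proof is verbatim the closed Pontryagin--Thom correspondence.
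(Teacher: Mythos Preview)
Your proposal is correct and is exactly the approach the paper has in mind: the paper's ``proof'' is the paragraph preceding the lemma, which sets up the Pontryagin--Thom map and then simply asserts that ``the standard proof of the Pontryagin--Thom construction in the closed case easily extends'' to give the bijection. You have supplied precisely that extension, with the rel-$\partial M$ bookkeeping (extending $f_v$ into $S^2\setminus\{p\}$, perturbing to transversality away from $\partial M$, normalizing near the preimage, and building the inverse via the collapse map) made explicit; there is nothing to add.
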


Let $\mathcal{F}(M)$ denote the group of framed links in the interior of $M$ up to framed cobordism. If $(L,{\bf f})$ is a framed link in the interior of $M$ then $L$ represents a homology class $[L]$, so we can define a homomorphism 
\[
\Phi: \mathcal{F}(M)\to H_1(M;\Z): (L,{\bf{f}})\mapsto [L].
\]
The homomorphism $\Phi$ is clearly surjective. In order to determine the preimage of a homology class we first recall that there is a natural ``intersection pairing" between $H_1(M;\Z)$ and $H_2(M,\partial M;\Z)$. Let $i_*:H_2(M;\Z)\to H_2(M,\partial M;\Z)$ be the map induced from the inclusion $(M,\emptyset)\to (M,\partial M)$. For $h\in H_1(M;\Z)$ set 
\[
D_h =\{h\cdot [\Sigma]: [\Sigma]\in  i_*(H_2(M;\Z))\},
\]
where $h\cdot [\Sigma]$ denotes the intersection pairing between the two homology classes. The set $D_h$ is clearly a subgroup of $\Z$. Let $d(h)$ be the smallest non-negative element in $D_h$. 
\begin{lem}\label{framings}
With the notation as above 
\[
\Phi^{-1}(h)=\Z/d(2h)\Z,
\] 
for any $h\in H_1(M;\Z)$.\qed
\end{lem}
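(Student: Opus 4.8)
The plan is to analyze when two framed links $(L_0,\mathbf{f}_0)$ and $(L_1,\mathbf{f}_1)$ representing the same homology class $h$ are framed cobordant, and in particular to compute the "monodromy" of framings on a fixed link type. First I would observe that, by the usual surgery/bordism moves, the fiber $\Phi^{-1}(h)$ is generated by the effect of changing the framing of a single unknotted component contained in a small ball: crossing changes and handle-slides let one convert any framed link with $[L]=h$ into a standard one plus a disjoint framed unknot in a ball, and the framing of that unknot (an integer, its self-linking) is the only remaining invariant. This gives a surjection $\mathbb{Z}\twoheadrightarrow \Phi^{-1}(h)$ sending $k$ to "$h$-representative with framing twisted by $k$". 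The content of the lemma is to identify the kernel of this surjection with $d(2h)\mathbb{Z}$.

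Next I would pin down the relation. Two framed links in the same homology class are framed cobordant if and only if they cobound a framed surface in $M\times[0,1]$; tracking the framing obstruction, a framing change by $k$ on a component is killable precisely when $k$ lies in the subgroup of $\mathbb{Z}$ of "achievable framing shifts," which is computed by pushing the cobordism across a closed surface in $M$. The factor of $2$ arises exactly as in the closed-manifold Pontryagin--Thom story: the relevant normal data of $L_f=f^{-1}(p)$ for $f\colon M\to S^2$ is a framing of a $1$-manifold in a $3$-manifold, and changing $f$ by a map supported near a $2$-cycle $\Sigma$ alters the framed link by a framed cobordism whose net framing effect on $L$ is $2(h\cdot[\Sigma])$ — the $2$ coming from $\pi_2(S^2)=\mathbb{Z}$ together with the fact that a degree-one map $S^2\to S^2$ contributes a framing twist of $\pm 2$ to a transverse circle (equivalently, $\pi_1(SO(2))\to\pi_1(SO(3))$ is reduction mod $2$, but here we are in the stable/unstable bookkeeping where the self-intersection of $\Sigma$ with $L$ is counted with a sign change twice). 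Concretely: given $[\Sigma]\in i_*(H_2(M;\mathbb{Z}))$, one builds an explicit homotopy of $f$ whose trace realizes a framed cobordism from $(L,\mathbf{f})$ to $(L,\mathbf{f}')$ with $\mathbf{f}'$ differing from $\mathbf{f}$ by $2(h\cdot[\Sigma])$ twists; conversely any framed cobordism rel the link type projects to such a $2$-cycle. Hence the subgroup of killable framing shifts is exactly $D_{2h}=\{2h\cdot[\Sigma]\}=2D_h$, and $\Phi^{-1}(h)\cong\mathbb{Z}/d(2h)\mathbb{Z}$.

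The step I expect to be the main obstacle is the careful computation of the framing contribution of a $2$-cycle, i.e.\ justifying the precise coefficient $d(2h)$ rather than $d(h)$ or $2d(h)$ (note $d(2h)$ need not equal $2d(h)$ when $D_h$ is not of the form $m\mathbb{Z}$ in a way compatible with doubling — though since $D_h\subseteq\mathbb{Z}$ is cyclic, $2D_h$ is generated by $2d(h)$, so in fact $d(2h)=2d(h)$; one should still check $D_{2h}=2D_h$, which is immediate from linearity of the intersection pairing). Making this rigorous requires either a clean model computation in a neighborhood $\Sigma\times[-1,1]\subset M$ of an embedded representative of $[\Sigma]$ — perturb $f$ to be transverse to $p$ with $f^{-1}(p)$ meeting $\Sigma$ in $h\cdot[\Sigma]$ points, then homotope $f$ by precomposing with a map that "spins" once around each such intersection point, and count the resulting framing change — or an appeal to the relative exact sequence of framed bordism groups. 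I would present the model computation, reducing to the local picture of a single transverse intersection point in $D^2\times[-1,1]$ where the framing change is the classical computation that a meridional $2$-sphere twists the core circle's framing by $\pm2$, and then sum over intersection points with signs and over all $[\Sigma]$ to obtain $D_{2h}$. Everything else (surjectivity of $\Phi$, the reduction to a single framed unknot, and the identification of $\Phi^{-1}(h)$ as a coset of the framing-shift subgroup) is routine Pontryagin--Thom bookkeeping.
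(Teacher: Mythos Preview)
Your proposal is correct and follows essentially the approach the paper has in mind: the paper does not actually prove this lemma but simply states that ``the proof of this lemma is an easy adaptation of the argument for the closed case in \cite{Gompf98}, so the reader is referred there for the proof.'' Your sketch is precisely that adaptation --- reduce to a fixed representative plus a framed unknot, identify $\Phi^{-1}(h)$ with $\Z$ modulo the subgroup of achievable framing shifts, and compute that subgroup as $2D_h$ by tracking how a homotopy of $f$ supported near a closed $2$-cycle $\Sigma$ changes the framing by $2(h\cdot[\Sigma])$. One small remark: your parenthetical about $\pi_1(SO(2))\to\pi_1(SO(3))$ is a bit of a red herring here, and your initial hedge that ``$d(2h)$ need not equal $2d(h)$'' is immediately (and correctly) retracted --- since $D_h\subset\Z$ is a subgroup and the intersection pairing is bilinear, $D_{2h}=2D_h$ and hence $d(2h)=2d(h)$ always; you may as well state this cleanly from the outset rather than raise and dismiss the worry.
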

The proof of this lemma is an easy adaptation of the argument for the closed case in \cite{Gompf98}, so the reader is referred there for the proof. 

\begin{remark}
{
It is well known in the closed case that if $(L_\xi,\bf{f}_\xi)$ is a framed link representing a plane field $\xi$ then $2[L_\xi]$ is the Poincar\'e dual of the Euler class of $\xi$. The same reasoning shows in the relative case that $2[L_\xi]$ is the Euler class of $\xi$ relative to $v$.
}
\end{remark}

\subsection{Homotopy classes of plane fields on link complements}
We are now ready to prove Theorems~\ref{thm:legloose} and~\ref{thm:transverseloose}. We begin by observing the following consequence of our discussion above. This result is a ``folk'' theorem that has appeared in the literature, see for example \cite{EliashbergFraser09}, though details of the argument have not appeared. Geiges and, independently, Klukas have also given unpublished proofs of this result using different techniques.  
\begin{lem}\label{tbrdet}
Let $L$ and $L'$ be two null-homologous Legendrian knots in a contact manifold $(M,\xi)$. Let $N$ and $N'$ denote standard neighborhoods of $L$ and $L'$ respectively. If $L$ and $L'$ are topologically isotopic, $\tb(L)=\tb(L')$ and $r(L)=r(L')$ then (after identifying $N'$ with $N$ via any preassigned isotopy and pushing $\xi'$ forward by this isotopy) $\xi|_{M\setminus N}$ is homotopic as a plane field to $\xi|_{M\setminus N'}$ relative to the boundary. 
\end{lem}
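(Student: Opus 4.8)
The plan is to apply the relative Pontryagin--Thom correspondence of Lemma~\ref{relative-tp} and the framing computation of Lemma~\ref{framings} to the manifold $W = M\setminus N$ (equivalently $M\setminus N'$ after the identification). First I would check the hypothesis of Lemma~\ref{relative-tp}: along $\partial W = \partial N$ the plane field $\xi$ is the standard plane field on the boundary of a standard Legendrian neighborhood, which in a suitable trivialization of $TW$ corresponds to a boundary map missing the north pole, so both $\xi|_W$ and $\xi'|_W$ determine framed cobordism classes of framed links in the interior of $W$. The two plane fields agree on $\partial W$ since $N$ and $N'$ are \emph{standard} neighborhoods with the same $\tb$ (the contact germ on the boundary of a standard neighborhood depends only on $\tb(L)=\tb(L')$), so after pushing forward we are genuinely comparing two elements of $\mathcal{F}(W)$ that extend the \emph{same} boundary data.

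Next I would compute the obstruction in two stages, matching the ``2-dimensional'' and ``3-dimensional'' obstructions mentioned in the introduction. The 2-dimensional part is the homology class $[L_\xi]\in H_1(W;\Z)$ of the Pontryagin link: by the Remark after Lemma~\ref{framings}, $2[L_\xi]$ is the relative Euler class of $\xi|_W$. Since $L$ is null-homologous, $H_1(W;\Z)\cong\Z$ generated by the meridian $\mu$ of $L$, and the relative Euler class of $\xi$ on the knot complement is determined by $\tb$ and $r$ (this is the standard relative-Euler-class computation of \cite{EtnyreHonda01b}): pairing with the Seifert surface capped off in $W$ recovers $\tb(L)$ and the rotation number records $r(L)$. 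Hence $[L_\xi] = [L_{\xi'}]$ in $H_1(W;\Z)$, i.e.\ $\Phi(L_\xi,\mathbf{f}_\xi)=\Phi(L_{\xi'},\mathbf{f}_{\xi'})$. For the 3-dimensional part, Lemma~\ref{framings} says the fiber $\Phi^{-1}(h)$ is $\Z/d(2h)\Z$, where $d(2h)$ is computed from the intersection pairing of $2h$ with $i_*(H_2(W;\Z))$; because $L$ is null-homologous, the Seifert surface $\Sigma$ of $L$ gives a class in $H_2(W,\partial W;\Z)$ with $\mu\cdot[\Sigma]=1$, forcing $d(2h)=0$ for $h=[L_\xi]$, so the fiber is a single point. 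Therefore $(L_\xi,\mathbf{f}_\xi)$ and $(L_{\xi'},\mathbf{f}_{\xi'})$ are framed cobordant, and by Lemma~\ref{relative-tp} the plane fields $\xi|_{M\setminus N}$ and $\xi|_{M\setminus N'}$ are homotopic rel boundary.

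The main obstacle, and the step I would spend the most care on, is the identification of the relative Euler class computation with $\tb$ and $r$ and, more subtly, making sure the \emph{3-dimensional} obstruction really is killed rather than merely reduced. The surprise flagged in the introduction is exactly that $d_3$-type obstruction: a priori one might fear a residual $\Z/d\Z$ ambiguity not controlled by the classical invariants, and the point is that null-homologousness makes $d(2h)=0$ because the Seifert surface furnishes a class pairing to $1$ with the generator of $H_1$. I would therefore write this homology bookkeeping out carefully: identify $H_1(M\setminus N;\Z)$, exhibit the relative $H_2$ class coming from the Seifert surface, and verify the intersection number, since everything hinges on it. The comparison of $r(L)$ with the winding of a Legendrian longitude in the chosen trivialization is exactly the computation already recalled in the proof of Proposition~\ref{looseLegbound}, so I would cite that rather than repeat it.
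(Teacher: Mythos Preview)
Your treatment of the 3-dimensional obstruction contains a genuine error that breaks the argument. You claim that the Seifert surface $\Sigma$ contributes a class to $i_*(H_2(W;\Z))$ pairing to $1$ with the meridian, and that this forces $d(2h)=0$ and hence the fiber $\Phi^{-1}(h)=\Z/d(2h)\Z$ to be a single point. Both steps fail. First, $\Sigma$ has boundary on $\partial W$, so it lies in $H_2(W,\partial W;\Z)$ but \emph{not} in the image of $i_*\colon H_2(W;\Z)\to H_2(W,\partial W;\Z)$; only absolute classes enter the definition of $D_h$. Second, even if you could arrange $d(2h)=0$, you would get $\Z/0\Z\cong\Z$, an \emph{infinite} fiber, not a singleton. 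In fact the fiber can never be a single point: every element of $D_{2h}$ is of the form $2h\cdot[\Sigma]$, hence even, so $d(2h)$ is either $0$ or an even positive integer and $\Z/d(2h)\Z$ always has at least two elements. So one cannot kill the framing obstruction by homological bookkeeping alone. (You also write $H_1(W;\Z)\cong\Z$; in general $H_1(M\setminus N;\Z)\cong H_1(M;\Z)\oplus\Z$, and the $H_1(M;\Z)$ summand must be handled too.)

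The paper resolves the framing obstruction by a different mechanism: it chooses the trivialization of $TM$ so that on the neighborhood $N$ the map to $S^2$ lands at the south pole. Then the framed links $(L_\xi,\mathbf{f}_\xi)$ and $(L_{\xi'},\mathbf{f}_{\xi'})$, although constructed for the relative problem on $W$, are \emph{simultaneously} the Pontryagin--Thom data for $\xi$ and $\xi'$ on the closed manifold $M$. But on $M$ the two plane fields are literally the same contact structure, hence homotopic, so their framings differ by exactly the divisibility of the Euler class of $\xi$ on $H_2(M;\Z)$; one then checks this equals $2d([L_\xi])$ computed in $W$, since closed surfaces in $W$ are precisely closed surfaces in $M$ pushed off $L$. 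Thus the two framed links lie in the same coset, not because the coset is trivial, but because the comparison on the closed manifold pins down the difference. Your proposal misses this step entirely; the careful construction of the trivialization (including the Lutz twists to make the relative Euler class of the auxiliary plane field vanish so that $u$ extends) is what makes it work.
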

Recall if the Euler class of $\xi$ is non-zero then to define the rotation number one needs to specify a homology class for the Seifert surface of $L$. In this case we assume the Seifert surfaces for $L$ and $L'$ are related by the same ambient isotopy that relates $L$ and $L'$.
\begin{proof}
By assumption there is an ambient isotopy of $M$ taking $L'$ to $L$. Pushing $\xi$ forward by this isotopy we have two plane fields $\xi$ and $\xi'$ that agree on a standard neighborhood $N$ of $L=L'$. To prove the theorem it suffices to show $\xi|_{M\setminus N}$ is homotopic, rel boundary, to $\xi'|_{M\setminus N}$. By Lemma~\ref{relative-tp} the homotopy class of these plane fields is determined by the framed links $(L_\xi,{\bf{f}}_\xi)$ and $(L_{\xi'}, {\bf{f}}_{\xi'})$ associated to them by the Pontryagin--Thom construction. According to Lemma~\ref{framings} we need to check that $L_\xi$ and $L_{\xi'}$ represent the same element in $H_1(M\setminus N;\Z)$ and that the framings differ by a multiple of $2d([L_\xi])$. 

When applying Lemma~\ref{relative-tp} and Lemma~\ref{framings} we can use any fixed framing of the tangent bundle that satisfies the hypothesis of Lemma~\ref{relative-tp}. To make the computations below easier we choose a convenient trivialization. We begin by taking the Reeb vector field $v$ to the contact structure $\xi$. After fixing a metric we denote the plane field orthogonal to the vector $v$ by $\xi_v$ (we begin with a metric for which $\xi$ is orthogonal to $v$). At this point we would like to emphasize that we are currently constructing a trivialization of the tangent bundle of $M$ and alterations to $\xi_v$ below do not affect either $\xi$ or $\xi'$. 

Along $L=L'$ choose the unit tangent vector field $u$ to $L$ if $r(L)=r(L')$ is even, otherwise choose $u$ to be the unit tangent vector in $\xi_v$ along $L$ with one negative (clockwise) twist added to it. The vector field $u$ can be extended to the neighborhood $N$ of $L$ (recall $\xi$ and $\xi'$ agree on $N$). We would like to extend $u$ to all of $M$. This is, in general, not possible so we begin by modifying $v$ and $u$. To this end notice that a simple computation reveals that with $X=M\setminus N$ we have $H^2(X,\partial X;\Z)\cong H_1(X;\Z)\cong H_1(M;\Z)\oplus \Z$, with the $\Z$ factor generated by the meridian $\mu$ to $L$. Thus the Euler class of $\xi_v$ relative to $u$ on $\partial X$ is determined by its evaluation on absolute chains in $X\subset M$ (that is determined by the Euler class of $\xi_v$ on $M$) and by the evaluation on the Seifert surface of $L$ in $X$. Since $\xi_v$ is a contact structure we may perform half Lutz twists on transverse curves in $X$. Each such twist changes the Euler class of $\xi_v$ by twice the Poincar\'e dual of the transverse curve. Since $\xi_v$ is oriented the component of the relative Euler class in $H_1(M;\Z)$ is even and by the choice of $u$ above the component of the relative Euler class in $\Z$ is also even. Thus by Lutz twists we can arrange that the Euler class of $\xi_v$, relative to $u$ on $\partial X$, is zero, so $u$ may be extended, as a section of $\xi_v$, over $X$. Now choosing an almost complex structure $J$ on $\xi_v$ we can let $w=Ju$. We can use $-v, u,w$ to trivialize $TM$ and $TX$. (That is the vector field $v$ maps to the south pole of $S^2$.)

Using this trivialization we have the framed links  $(L_\xi,{\bf{f}}_\xi)$ and $(L_{\xi'}, {\bf{f}}_{\xi'})$ associated to $\xi$ and $\xi'$ by the Pontryagin--Thom construction. Moreover, each of these links is also associated to the contact structure $\xi$ on $M$ (since we arranged that $v$ describes $\xi$ in $N$ and it maps to the south pole of $S^2$). Thus the components of $L_\xi$ and $L_{\xi'}$ in $H_1(M;\Z)$ agree. Moreover, if the rotation number of $L$ is even then it is clear that $L_\xi\cap \Sigma=\rot(L)$, where $\Sigma$ is the Seifert surface. Similarly for $L_{\xi'}$ and $\rot(L')$. If the rotation numbers are odd then $L_\xi\cap \Sigma=\rot(L)+1=\rot(L')+1=L_{\xi'}\cap \Sigma$. Thus we see that $L_\xi$ is homologous to $L_{\xi'}$.

Since $\xi$ and $\xi'$ are homotopic as plane fields on $M$ we know that ${\bf{f}}_\xi$ and ${\bf{f}}_{\xi'}$ differ by the divisibility of the image of the Euler class of $\xi$ on $H_2(M;\Z)$. But this is exactly $2d([L_\xi])$ as defined above. Thus $\xi$ and $\xi'$  on $M\setminus N$ are homotopic relative to the boundary. 
\end{proof}

\begin{proof}[Proof of Theorem~\ref{thm:legloose}]
Eliashberg's classification of overtwisted contact structures in \cite{Eliashberg89} says that two contact structures are isotopic as contact structures if and only if they are homotopic as plane fields. Thus if $L$ and $L'$ are two loose Legendrian knots with the same Thurston-Bennequin invariant and rotation number, then Lemma~\ref{tbrdet} implies the complements of $L$ and $L'$ are contactomorphic (rel boundary). The contactomorphism can clearly be extended over the solid torus neighborhood of $L$ and $L'$. Thus $L$ and $L'$ are coarsely equivalent. 

Now given a knot type $\mathcal{K}$ and an overtwisted contact structure $\xi$ there is an overtwisted disk $D$ and a knot $K$ in the knot type $\mathcal{K}$ that is disjoint from $D$. We can $C^0$-approximate $K$ by a Legendrian knot $L$. Thus we may assume that $L$ is disjoint from $D$. Let $U$ and $U'$ be the Legendrian boundary of $D$ with opposite orientations. Notice that $\tb(U)=\tb(U')=0$ and $\rot(U)=-\rot(U')=1$. It is well known, see \cite{EtnyreHonda03}, how the Thurston-Bennequin invariant and rotation numbers behave under connected sums, so we can conclude that 
\[
\tb(L\# U)=\tb(L)+\tb(U)+1=\tb(L)+1 \quad \text{and} \quad \rot(L\# U)=\rot(L)+\rot(U)=\rot(L)+1.
\]
Since connect summing $L$ with $U$ or $U'$ does not change the knot type of $L$ we see that we can find a Legendrian knot in the knot type of $\mathcal{K}$ with Thurston-Bennequin invariant one larger than that of $L$ and with rotation number one larger or one smaller (by connect summing with $U'$). 

We can assume that $D$ is convex and hence there is an embedding of $D^2\times[-1,1]$ such that $D^2\times\{t\}$ is an overtwisted disk for all $t\in[-1,1]$. Thus we have arbitrarily many copies of $U$ and $U'$. By repeated connect summing of $L$ with $U$ and $U'$ or stabilizing $L$ we can change the Thurston-Bennequin invariant to any desired integer. Moreover, it is easy to combine connect summing with $U$ and $U'$ and stabilization to realize any integer, of the appropriate parity, as the rotation number of a Legendrian in the knot type $\mathcal{K}$ without changing the Thurston-Bennequin invariant. This finishes the proof of the theorem. 
 \end{proof}

\begin{proof}[Proof of Theorem~\ref{thm:transverseloose}]
Given two loose transverse knots $T$ and $T'$ in an overtwisted contact manifold $(M,\xi)$ with the same self-linking number, we can choose Legendrian approximations $L$ and $L'$ of $T$ and, respectively, $T'$ such that they have the same Thurston-Bennequin invariant and rotation number (just take any Legendrian approximations of each knot and negatively stabilize one of them if necessary). We can choose $L$ to be in any pre-chosen neighborhood of $T$. Thus we can choose $L$ so that it is loose. Similarly we can assume that $L'$ is loose. From Theorem~\ref{thm:legloose} there is a contactomorphism of $(M,\xi)$ taking $L$ to $L'$. As the transverse push-off of a Legendrian knot is well-defined, we can isotope this contactomorphism through contactomorphisms so that it takes $T$ to $T'$.

Lastly, we can clearly use the Legendrian knots realized in Theorem~\ref{thm:legloose} to realize all the claimed self-linking numbers. 
\end{proof}

\section{Non-loose transverse knots}
In this section we prove our main theorems concerning transverse knots, that is Theorems~\ref{thm:infinite1} through~\ref{thm:tclassify}. We begin with the construction of infinite families of transverse knots with the same classical invariants. 

\begin{proof}[Proof of Theorem~\ref{thm:infinite1}] As in the statement of the theorem let $(B,\pi)$ be a non-trivial open book for the manifold $M$, and $\xi_B$ its corresponding contact structure.
Let $\xi_n$ be the contact structure obtained from $\xi_B$ by adding $n$ full Lutz twists along $B$. Let $B_n$ be the core of the Lutz twist tube in $\xi_n$. Clearly each of the $\xi_n, n>0$, is overtwisted and homotopic, as a plane field, to $\xi_B$ for all $n$. Thus Eliashberg's classification of overtwisted contact structures in \cite{Eliashberg89} implies all the $\xi_n,n>0$, are isotopic contact structures and we denote a representative of this isotopy class by $\xi$. Isotoping all the $\xi_n$ to $\xi$ we can think of $B_n$ as a transverse knot in $\xi$.

We claim that all the $B_n$ are non-loose. Indeed denote the complement of $B_n$ in $(M,\xi)$ by $(C_n, \xi_n')$. To show $(C_n, \xi_n')$ is tight we give a different construction of these contact manifolds. Consider a standard neighborhood  $N(B)$ of $B$ in $(M,\xi_B)$. In particular there is some $a$ such that $N(B)$ is contactomorphic to $\{(\phi, (r,\theta))\in S^1\times \R^2 | r\leq a\}$ with the contact structure $\ker (\phi+r^2\, d\theta)$, where $\phi$ is the coordinate on $S^1$ and $(r,\theta)$ are polar coordinates on $\R^2$. 
Thus the characteristic foliation on $\partial N(B)$ is a linear  (in particular pre-Lagrangain) foliation by lines of some slope $s$. Notice that we can choose numbers $s_n\in [2n\pi,2n\pi +\pi/2]$ such that the manifold $T^2\times [0,s_n]$ with the contact structure $\ker(\sin t\, dx +\cos t\, dy)$, where $t$ is the coordinate on $[0,s_n]$ and $(x,y)$ are coordinates on $T^2$, which we denote by $(T_n,\zeta_n)$, has the following properties: 
\begin{enumerate}
\item the characteristic foliation on $T^2\times\{s_n\}$ is a linear foliation by lines of slope $s$,
\item the characteristic foliation on $T^2\times\{0\}$ is a linear foliation by lines of slope 0,
\item the Giroux torsion of $(T_n,\zeta_n)$ is $n$, and
\item $(T_n,\zeta_n)$ is universally tight.
\end{enumerate}
Items (1) and (2) are obvious, Item (3) is proved in \cite{Giroux00, Kanda97} as is Item (4), though it is also easily checked. 

Let  $(\overline{C}_n, \overline{\xi}_n)$be the manifold obtained by gluing $(\overline{M-N(B)}, \xi|_{\overline{M-N(B)}})$ and  $(T_n,\zeta_n)$ along their boundaries.   It is clear that the complement $C_n$ of $B_n$ in $(M,\xi)$ is the interior of   $(\overline{C}_n, \overline{\xi}_n)$.
Since it is well known that the complement of the binding of an open book is universally tight, see \cite{EtnyreVela-Vick10}, it is clear $(\overline{M-N(B)}, \xi|_{\overline{M-N(B)}})$ is universally tight. We now recall that Colin's gluing theorem, \cite{Colin99}, says that gluing two universally tight contact structure along a pre-Lagrangian incompressible torus results in a universally tight contact structure. Thus  $(\overline{C}_n, \overline{\xi}_n)$, and hence  $(C_n,\xi'_n)$,  is universally tight. 

To show infinitely many of the $B_n$ are distinct we need the following observation that follows immediately from Lemma~\ref{lem:torsioncompute}, which is proven below.
\begin{lem}\label{lem:finite}
Let $[T]$ be the isotopy class of tori in $C_n\subset \overline{C}_n$ parallel to the boundary. 
The Giroux torsion of $(\overline{C}_n, \overline{\xi}_n)$ in the isotopy class $[T]$ is finite:  
\[
\tor ((\overline{C}_n, \overline{\xi}_n),[T])<\infty.
\]
Hence $\tor ((C_n,\xi|_{C_n}),[T])$ is also finite.
\end{lem}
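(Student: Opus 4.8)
The plan is to deduce this at once from Lemma~\ref{lem:torsioncompute}, which is proved below and which computes the Giroux torsion of $(\overline{C}_n,\overline{\xi}_n)$ in the boundary--parallel class $[T]$ exactly; since that computation produces a finite number (in fact $n$), the asserted finiteness is immediate, and then Lemma~\ref{lem:torsion2} --- applicable because the characteristic foliation on $\partial\overline{C}_n=T^2\times\{s_n\}$ is linear by item~(1) of the construction --- transfers the statement to the interior $\interior\overline{C}_n=C_n$. So the real content lies in Lemma~\ref{lem:torsioncompute}, and the remaining paragraphs sketch how I would establish it. I note that for Lemma~\ref{lem:finite} alone one only needs a finite \emph{upper} bound on the torsion, not the exact value.

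Recall $\overline{C}_n=\overline{M-N(B)}\cup_\partial(T_n,\zeta_n)$, glued along the pre-Lagrangian torus $T=\partial N(B)$, and that $[T]$ is precisely the class of tori parallel to $T$. Since $(T_n,\zeta_n)$ has Giroux torsion $n$ we get the easy inequality $\tor(\overline{C}_n,[T])\ge n$. For the reverse inequality the crucial step is to show that the binding complement $(\overline{M-N(B)},\xi|_{\overline{M-N(B)}})$ carries \emph{no} Giroux torsion in the isotopy class of its boundary torus. This is where the hypothesis that $(B,\pi)$ is a genuine open book enters: the contact structure on $M-N(B)$ is positively transverse to the pages, and near $T$ it is the standard neighbourhood of a transverse curve, in which the nested tori $\partial S_b$ carry linear characteristic foliations whose slopes sweep out only a short arc and in particular never complete a full turn. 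Using convex surface theory, the classification of tight contact structures on $T^2\times[0,1]$ of Giroux and Honda, and the universal tightness of $\overline{M-N(B)}$, one argues that a boundary--parallel convex torus can be isotoped so that the $T^2\times[0,1]$ it cobounds with $T$ is a tight, minimally twisting piece, leaving no room for a full twist.

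Granting this, I would finish Lemma~\ref{lem:torsioncompute} by a cutting--and--gluing argument: given a contact embedding of $(T^2\times[0,1],\zeta_m)$ into $\overline{C}_n$ with core in $[T]$, put its boundary tori in standard convex form, cut $\overline{C}_n$ along a parallel copy of $T$, and apply the standard stacking/gluing lemmas for tight $T^2\times[0,1]$ (in the spirit of Colin's gluing theorem already invoked above) to conclude that the twisting contributed on the $\overline{M-N(B)}$ side is zero; hence all $m$ units of twisting are contained in a $T^2\times[0,1]$ that isotopes into $(T_n,\zeta_n)$, forcing $m\le n$. This gives $\tor(\overline{C}_n,[T])=n$, and with Lemma~\ref{lem:torsion2} also $\tor((C_n,\xi|_{C_n}),[T])=n<\infty$. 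The main obstacle is exactly the middle step --- showing the binding complement carries no boundary--parallel Giroux torsion --- together with the bookkeeping needed to guarantee that gluing across the pre-Lagrangian torus $T$ does not manufacture extra twisting; everything past that point is routine convex surface theory.
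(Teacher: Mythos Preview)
Your deduction of Lemma~\ref{lem:finite} from Lemma~\ref{lem:torsioncompute} together with Lemma~\ref{lem:torsion2} is exactly what the paper does. (One minor correction: $\partial\overline{C}_n$ is $T^2\times\{0\}$, not $T^2\times\{s_n\}$, since the latter is glued to $\partial N(B)$; but the characteristic foliation there is still linear by item~(2), so your application of Lemma~\ref{lem:torsion2} is fine.)

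Where you diverge from the paper is in the sketch of Lemma~\ref{lem:torsioncompute}, and there the approach has a real gap. The paper does not argue by cutting along $T$ and bounding the torsion on each side. Instead it defines an auxiliary invariant $mt(\gamma)$, the maximal twisting of a Legendrian arc on a convex page in the isotopy class of a fixed non-separating arc $\gamma\subset\Sigma$, and proves two facts: $mt(\gamma)\le -2\tor((\overline{C},\overline{\xi}_n),[T])$ (Lemma~\ref{lem:bound}) and $mt(\gamma)=-2n$ (Lemma~\ref{lem:compute}). The upper bound is obtained by doubling $\overline{C}$ along its boundary, passing to the infinite cyclic cover $S\times\R$ (where $S$ is the doubled page), using Theorem~\ref{thm:maxclass} to show the lifted structure is $\R$-invariant with at least $4k-1$ dividing curves on $S$ parallel to $\partial\Sigma$ whenever $\zeta_k$ embeds, and then applying Giroux's semi-local Bennequin inequality to the double of $\gamma$. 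This is a global argument that never tries to localize the torsion to one side of $T$.

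The gap in your outline is precisely the step you label ``bookkeeping.'' Knowing that the binding complement carries zero boundary-parallel Giroux torsion (which is indeed a theorem of Etnyre and Vela-Vick, not something one reads off from the local picture near $T$) and that $(T_n,\zeta_n)$ has torsion $n$ does not, by any general principle, bound the torsion of the glued manifold: Giroux torsion is not additive under gluing along pre-Lagrangian tori. An embedded copy of $(T^2\times[0,1],\zeta_m)$ in $\overline{C}_n$ need not respect the decomposition along $T$, and your assertion that it ``isotopes into $(T_n,\zeta_n)$'' is exactly the statement that needs proof, not a consequence of routine convex surface theory. The paper itself remarks that one could substitute Proposition~4.6 of \cite{HondaKazezMatic02} to obtain the finiteness needed for Theorem~\ref{thm:infinite1}, but that proposition is a substantial result proved by methods much closer to the $mt(\gamma)$ argument than to a direct cutting-and-gluing scheme.
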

Since it is clear the Giroux torsion of $(C_n,\xi|_{C_n})$  in the isotopy class of $[T]$ is greater than or equal to $n$ and all the $C_n$'s have finite torsion, we can conclude that infinitely many of the $C_n$ are distinct and hence infinitely many of the $B_n$ are distinct. 

Finally we notice that $sl(B_n)=-\chi(B)$ for all $n$. Indeed notice that a Seifert surface for $B_n$ can be built by taking a Seifert surface for $B$ (that is a page of the open book) and extending it by the annulus $\{p\}\times S^1\times (0,s_n]$ in $T_n$. As we have an explicit expression for the contact structure in a Lutz tube it is easy to see that this annulus can be perturbed to have no singularities. As the self-linking number of $B_n$ can be computed from the singularities of a Seifert surface for $B_n$ we see the all the self-linking numbers must agree with $sl(B)$ which is well known to be $-\chi(B)$.
\end{proof}

\begin{proof}[Proof of Theorem~\ref{thm:infinite2}] The argument here is almost identical to the one given above for Theorem~\ref{thm:infinite1}. The only difference being that one begins by performing a half Lutz twist on $B$ and notices that this changes the self-linking number of $B$ from $-\chi(B)$ to $\chi(B)$. 
\end{proof}

We now turn to the proof of the above lemma. Using the notation established in the previous proof, we will actually compute the Giroux torsion of $(\overline{C}_n, \overline{\xi}_n)$ in the isotopy class of $[T]$, which by Lemma~\ref{lem:torsion2} is the same as $\tor ((C_n,\xi|_{C_n}),[T])$.

For each $n$, the manifold $\overline{C}_n$ is (canonically up to isotopy) diffeomorphic to $\overline{M-N(B)}$ since $\overline{C}_n\setminus\overline{M-N(B)}$ is $T^2\times [0,1]$. We denote this common diffeomorphism type as $\overline{C}$ and think of the contact structures $\overline{\xi}_n$ constructed above as contact structures on $\overline{C}$. 

\begin{lem}\label{lem:torsioncompute}
With notation as above
\[
\tor((\overline{C}, \overline{\xi}_n),[T])=n.
\]
\end{lem}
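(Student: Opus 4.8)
The plan is to prove the two inequalities $\tor((\overline{C},\overline{\xi}_n),[T])\geq n$ and $\tor((\overline{C},\overline{\xi}_n),[T])\leq n$ separately. The first is essentially by construction: the contact structure $\overline{\xi}_n$ was built by gluing $(T_n,\zeta_n)$, which has Giroux torsion exactly $n$ in the isotopy class of tori parallel to its boundary, onto $(\overline{M-N(B)},\xi|)$, and the glued-in copy of $T^2\times[0,s_n]$ sits inside $\overline{C}$ with its $[0,s_n]$-direction tori isotopic to $[T]$. Since $s_n\geq 2n\pi$, we can find a sub-interval $[a,a+2n\pi]$ of $[0,s_n]$ over which the contact structure is (a reparametrization of) $\zeta_n$, giving a contact embedding of $(T^2\times[0,1],\zeta_n)$ into the interior, hence $\tor\geq n$. (One should be a bit careful that the embedded torsion tori are indeed in the isotopy class $[T]$ and not some other isotopy class; this follows because $\overline{C}\setminus(\overline{M-N(B)})\cong T^2\times[0,1]$ and all the relevant tori are isotopic within this product region to the boundary.)

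The substance of the lemma is the upper bound $\tor((\overline{C},\overline{\xi}_n),[T])\leq n$. The idea is a ``no extra torsion can hide elsewhere'' argument. Suppose for contradiction we had a contact embedding of $(T^2\times[0,1],\zeta_{n+1})$ with its incoming boundary torus in the class $[T]$. The key structural input is that $\overline{M-N(B)}$ is the complement of the binding of an open book, hence universally tight, and moreover — this is where I would invoke the specific geometry — it supports a contact structure with \emph{no} Giroux torsion along tori parallel to its boundary, or at least with controlled torsion. More precisely, I would argue: the complement of the binding fibers over $S^1$ with fiber the page $\Sigma$, and $\xi|_{\overline{M-N(B)}}$ is quasi-compatible with this fibration; a contact structure quasi-compatible with a fibration has a contact vector field transverse to the fibers, and such a structure cannot contain a torsion layer $T^2\times[0,1]$ with the $[0,1]$-direction tori isotopic to fibered tori (a torsion layer would force the contact planes to make a full $2\pi$ turn in a direction incompatible with transversality to the fibration, or equivalently would violate the tightness estimates for such fibered pieces). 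Combining this with the fact that the glued region $(T_n,\zeta_n)$ contributes exactly $n$, and using that Giroux torsion is ``additive'' across an incompressible pre-Lagrangian splitting torus in a suitable sense, one concludes that the total torsion in the class $[T]$ is exactly $n$.

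The main obstacle will be making rigorous the claim that ``the complement of the binding of a non-trivial open book, with its quasi-compatible contact structure, has no boundary-parallel Giroux torsion.'' The cleanest route I see is: (1) by Lemma~\ref{lem:torsion2} we may work with the interior, and any torsion layer, after isotopy, may be assumed to have linear characteristic foliation on its boundary tori and to be ``straightened'' with respect to the fibration; (2) by convex surface theory, intersect the torsion layer with a page $\Sigma$ to get a convex surface, and analyze the dividing curves — a genuine torsion layer would produce homotopically nontrivial closed dividing curves on $\Sigma$, which (for a page of an open book, with the quasi-compatible dividing set chosen as in Section~\ref{qcsection}) is incompatible with the structure being quasi-compatible, since quasi-compatibility pins down the dividing set on each page up to isotopy; (3) finally, one must rule out the torsion being ``partly inside $(T_n,\zeta_n)$ and partly inside $\overline{M-N(B)}$'' — here I would push the torsion layer by isotopy so that it meets the splitting torus $\partial(\overline{M-N(B)})$ transversally and minimally, make everything convex, and use an innermost-torus / edge-rounding argument together with the classification of tight contact structures on $T^2\times[0,1]$ (Honda, Giroux) to show any torsion beyond $n$ forces overtwistedness of one of the two universally tight pieces, a contradiction. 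I expect step (2)–(3), the convex-surface bookkeeping across the splitting torus, to be the genuinely delicate part; everything else is either construction or citation.
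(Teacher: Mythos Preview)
Your lower bound $\tor\geq n$ is fine and matches the paper's one-line observation ``from construction''.

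The upper bound is where the proposal has a real gap. The step you flag as delicate --- ``Giroux torsion is additive across an incompressible pre-Lagrangian splitting torus'' --- is not a theorem, and your sketch in (3) does not establish it. A torsion embedding $\phi:(T^2\times[0,1],\zeta_{n+1})\to(\overline{C},\overline{\xi}_n)$ need not respect the decomposition $\overline{C}=(\overline{M-N(B)})\cup T_n$ in any controlled way; innermost-torus and edge-rounding arguments will let you simplify the intersection of $\phi(T^2\times[0,1])$ with the splitting torus, but they do not by themselves produce a contradiction, because there is no off-the-shelf statement saying a universally tight contact structure on a surface bundle contains no boundary-parallel Giroux torsion. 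Step~(2), ``quasi-compatibility pins down the dividing set on each page up to isotopy,'' is also too strong as stated: quasi-compatibility gives you \emph{some} invariant dividing set, not uniqueness, and the monodromy need not be pseudo-Anosov here, so boundary-parallel curves on $\Sigma$ are allowed.

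The paper takes a different route. It defines an auxiliary invariant $mt(\gamma)$, the maximal twisting along Legendrian arcs isotopic (on isotopic convex pages) to a fixed non-separating arc $\gamma\subset\Sigma$, and proves two lemmas: (i)~$mt(\gamma)\leq -2\tor((\overline{C},\overline{\xi}_n),[T])$, and (ii)~$mt(\gamma)=-2n$. Together these give $\tor\leq n$. The content is in~(i): one doubles $\overline{C}$ along its boundary to a closed fibered manifold $P$, passes to the infinite cyclic cover $\widetilde{P}\cong S\times\R$ (with $S$ the doubled page), and shows the pulled-back contact structure is $\R$-invariant tight with at least $4k-1$ dividing curves on $S$ parallel to $\partial\Sigma$ whenever a $\zeta_k$-layer embeds. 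Giroux's semi-local Bennequin inequality on $S\times\R$ then bounds the twisting of the doubled arc, yielding~(i). The quasi-compatibility you invoke does enter, but only to show the $\R$-invariance of the cover and to count dividing curves --- not to rule out torsion directly.
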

Notice that Lemma~\ref{lem:finite} immediately follows from this result. The proof of this lemma is inspired by Proposition~4.6 in \cite{HondaKazezMatic02}. Slightly modifying our proof above, we could cite this result in place of Lemma~\ref{lem:torsioncompute} to prove the above theorem, but to prove our results below we need to identify the actual Giroux torsion which takes more work. 

We begin by defining an invariant and establishing a few properties.  Let $\Sigma$ be a page of the open book $(B,\pi)$ in $M$. This gives a properly embedded surface, also denoted $\Sigma$, in $\overline{C}$.  Let $\gamma$ be a properly embedded non-separating arc in $\Sigma$. Considering the contact structure $\overline{\xi}_n$ we know that the characteristic foliation of $\partial \overline{C}$ consists of meridional curves. We can assume that $\partial \Sigma$ is (positively) transverse to this foliation. Let $\mathcal{L}(\gamma)$ be the set of all Legendrian arcs $\gamma'$ embedded in $(\overline{C},\overline{\xi}_n)$ satisfying 
\begin{enumerate}
\item the arc $\gamma'$ lies on some convex surface $\Sigma'$ with (positively) transverse boundary in $\partial\overline{C}$,   and 
\item  there is a proper isotopy of $\Sigma'$ to $\Sigma$ through surfaces with transverse boundary on $\partial\overline{C}$ that takes $\gamma'$ to $\gamma$.  
\end{enumerate}
Set 
\[
mt(\gamma)= \max_{\gamma'\in \mathcal{L}(\gamma)} \{tw(\gamma',\Sigma')\}
\]
where $tw(\gamma',\Sigma')$ is the twisting of the contact planes along $\gamma'$ with respect to the framing given to $\gamma'$ by $\Sigma'$. 
We can show that this invariant is bounded by the Giroux torsion of $\eta_n$.
\begin{lem}\label{lem:bound}
With the notation as above we have the inequality
\[
mt(\gamma)\leq -2\tor((\overline{C}, \overline{\xi}_n),[T]).
\]
\end{lem}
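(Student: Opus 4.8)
The plan is to detect the Giroux torsion directly with an embedded torsion tube and to show that any Legendrian realization of $\gamma$ on a convex page must run across that tube twice, each crossing forcing a definite amount of negative twisting. Write $k=\tor((\overline{C},\overline{\xi}_n),[T])$. By definition there is a contact embedding $\phi\colon (T^2\times[0,1],\zeta_k)\to(\overline{C},\overline{\xi}_n)$ with $\phi(T^2\times\{0\})\in[T]$; using Lemma~\ref{lem:torsion2} and a contact isotopy I would position the image $N=\phi(T^2\times[0,1])$ as a collar of $\partial\overline{C}$, with $\phi(T^2\times\{0\})=\partial\overline{C}$ and the inner torus $\phi(T^2\times\{1\})$ cutting off the rest of $\overline{C}$. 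Both $N$ and $\overline{C}\setminus N$ are tight. It then suffices to prove $tw(\gamma',\Sigma')\le -2k$ for every $\gamma'\in\mathcal{L}(\gamma)$, since taking the maximum gives $mt(\gamma)\le -2\tor$.

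Fix such a $\gamma'$ on a convex surface $\Sigma'$. After a $C^\infty$-small perturbation that changes neither the isotopy class of the dividing set of $\Sigma'$ nor $tw(\gamma',\Sigma')$, I may assume $\Sigma'$ is transverse to $\partial N$, so that $A=\Sigma'\cap N$ is a convex annulus spanning the tube in the product direction, from $\partial\overline{C}$ to $\phi(T^2\times\{1\})$. The arc $\gamma'$ has both endpoints on $\partial\Sigma'\subset\partial\overline{C}$, hence outside $\interior N$; since $\gamma'$ is isotopic in $\Sigma'$ to the non-separating arc $\gamma$, it is essential and therefore cannot be pushed into the collar annulus $A$. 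Consequently $\gamma'$ enters $\overline{C}\setminus N$, so it must traverse $N$ at least twice, producing two sub-arcs $\gamma'_1,\gamma'_2\subset A$, each running fully from $\phi(T^2\times\{0\})$ to $\phi(T^2\times\{1\})$ and so spanning the entire torsion of the tube.

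The heart of the argument is that each spanning sub-arc contributes exactly $-k$. Along $A$ the contact planes are $\ker(\sin(2\pi k t)\,dx+\cos(2\pi k t)\,dy)$, so the direction of the contact plane transverse to $\partial_t$ rotates by $-2\pi k$ as $t$ runs from $0$ to $1$, while the framing supplied by $\Sigma'$ coincides with the product framing of the tube. Measured against the $\Sigma'$-framing, the contact framing along $\gamma'_i$ therefore winds exactly $-k$ full turns, contributing $-k$ to $tw(\gamma',\Sigma')$. Since $tw(\gamma',\Sigma')$ is the total signed rotation of the contact framing relative to the fixed $\Sigma'$-framing, it is additive over the decomposition of $\gamma'$ into the two spanning sub-arcs and the remaining part lying in $\overline{C}\setminus N$. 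On that remaining part the contact structure is tight, so the contribution equals $-\tfrac12$ times the number of transverse intersections with $\Gamma_{\Sigma'}$, all of the same negative sign; arranging the break points on $\partial N$ to lie on Legendrian divides so the endpoint terms vanish, this contribution is $\le 0$. Summing, $tw(\gamma',\Sigma')\le -k-k+0=-2k$, giving $mt(\gamma)\le -2\tor$.

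The main obstacle I anticipate is the bookkeeping that pins the constant at exactly $-2k$ rather than an off-by-one approximation: one must check that the $\Sigma'$-framing along $A$ is genuinely the product framing (so the rotation is precisely $k$ turns per crossing), that the two spanning sub-arcs are truly forced and truly span the tube (rather than making cancelling excursions), and that the endpoint contributions at $\partial N$ can be made non-positive. Underlying all of this is the robustness claim that no isotopy of the convex page can undo the twisting imposed by the torsion tube; this is exactly the Giroux-torsion detection principle modelled on Proposition~4.6 of \cite{HondaKazezMatic02}, and it relies on the tightness of both $N$ and $\overline{C}\setminus N$, since by the Giroux criterion a convex surface in either piece carries no contractible dividing curves.
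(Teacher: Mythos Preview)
Your strategy is genuinely different from the paper's, and the intuition is sound, but the execution has a real gap at the step you yourself flag as the main obstacle.

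The paper does \emph{not} try to cut $\gamma'$ into pieces and add up local twisting contributions. Instead it doubles: it glues two copies of $(\overline{C},\overline{\xi}_n)$ along their boundary, passes to the infinite cyclic cover $S\times\R$ (with $S$ the double of $\Sigma$), and shows this cover carries an $\R$--invariant tight contact structure whose dividing set on $S$ contains at least $4k-1$ curves parallel to $\partial\Sigma$. The arc $\gamma'$ doubles to a closed Legendrian $c\subset S$ with $t(c)=2\,tw(\gamma',\Sigma')$, and Giroux's semi-local Bennequin inequality gives $t(c)\le -\tfrac12(\Gamma_S\cdot c)\le -(4k-1)$, hence $tw(\gamma',\Sigma')\le -2k$. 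The point of doubling is precisely to replace your delicate arc--by--arc framing bookkeeping with a single global inequality for a closed curve on a closed convex surface.

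Your argument breaks at the assertion ``the framing supplied by $\Sigma'$ coincides with the product framing of the tube.'' That sentence is doing all of the work, and it is not justified. After the $C^\infty$--small perturbation making $\Sigma'$ transverse to $\partial N$, the annulus $A=\Sigma'\cap N$ need not be a product annulus $\{y=\text{const}\}\times[0,1]$; it is only isotopic to one. The $\Sigma'$--framing along $\gamma'_i$ is the normal to $\gamma'_i$ inside $A$, and this can itself rotate relative to the $T^2$--directions as one crosses the tube. Without pinning down $A$ you cannot conclude that the relative winding is $-k$ per crossing rather than, say, $0$ or $-k+1$. The same problem contaminates the outside piece: your claim that its contribution is $\le 0$ rests on the convex--surface formula $tw=-\tfrac12\#(\gamma'\cap\Gamma)$, but that formula applies to the whole Legendrian arc on the whole convex surface, not to a sub-arc relative to an ad hoc framing chosen at the cut points. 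There is no mechanism here preventing a positive contribution outside from cancelling a negative one inside. (There is also a smaller issue: $\gamma'$ may meet $N$ in more than two spanning arcs, and you do not argue that the extra crossings cannot cancel.)

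A way to repair your line of attack would be to work entirely with dividing curves: show that $\Gamma_{\Sigma'}$ must contain at least $2k$ closed curves parallel to $\partial\Sigma'$, so that any non-separating arc meets $\Gamma_{\Sigma'}$ at least $4k$ times and $tw(\gamma',\Sigma')=-\tfrac12|\gamma'\cap\Gamma_{\Sigma'}|\le -2k$. But establishing that lower bound on boundary--parallel dividing curves for an \emph{arbitrary} convex page isotopic to $\Sigma$ is exactly the hard part; the paper's doubling trick and appeal to Giroux's inequality is what makes it go through without having to control $\Sigma'$ directly.
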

\noindent
We can also compute the invariant. 
\begin{lem}\label{lem:compute}
In the contact manifold $(\overline{C},\overline{\xi}_n)$ we have
$mt(\gamma)=-2n$. \qed
\end{lem}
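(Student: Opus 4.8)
The plan is to compute $mt(\gamma)$ by combining the two inequalities coming from Lemma~\ref{lem:bound} and Lemma~\ref{lem:compute} with a direct construction of a Legendrian arc realizing twisting $-2n$. Recall that $(\overline{C},\overline{\xi}_n)$ was built by gluing the universally tight piece $(\overline{M-N(B)},\xi|_{\ldots})$ to the Giroux torsion block $(T_n,\zeta_n)=\bigl(T^2\times[0,s_n],\ker(\sin t\,dx+\cos t\,dy)\bigr)$. First I would set up the half-open surface $\Sigma$ in $\overline{C}$ as a page of the open book union an annulus $\{p\}\times S^1\times(0,s_n]$ in the torsion block — exactly the Seifert-surface extension used in the proof of Theorem~\ref{thm:infinite1} — so that the non-separating arc $\gamma$ ``passes through'' the torsion block. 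Tracking the framing of $\gamma$ with respect to $\Sigma$ as the arc crosses $T^2\times[0,s_n]$, the explicit contact form $\sin t\,dx+\cos t\,dy$ shows the contact planes rotate by $2\pi n$ (since $s_n\in[2n\pi,2n\pi+\pi/2]$ and each full $2\pi$ of $t$ contributes a full twist), contributing $-2n$ to the twisting while the remaining universally-tight part contributes nothing after the arc is made Legendrian with maximal twisting there. This produces an element of $\mathcal{L}(\gamma)$ with $tw(\gamma',\Sigma')=-2n$, giving $mt(\gamma)\geq -2n$.

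Next I would invoke Lemma~\ref{lem:bound}, which gives $mt(\gamma)\leq -2\tor((\overline{C},\overline{\xi}_n),[T])$. Since the Giroux torsion in the class $[T]$ is manifestly at least $n$ (the block $(T_n,\zeta_n)$ embeds with torsion $n$, as recorded in item (3) of the construction), we get $mt(\gamma)\leq -2n$. Combined with the lower bound from the first paragraph this forces $mt(\gamma)=-2n$, and simultaneously pins $\tor((\overline{C},\overline{\xi}_n),[T])=n$, which is precisely the assertion of Lemma~\ref{lem:torsioncompute} (and, by Lemma~\ref{lem:torsion2}, of Lemma~\ref{lem:finite}).

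The main obstacle is the upper bound direction, i.e. showing $mt(\gamma)$ cannot exceed $-2n$, equivalently that one cannot find a convex surface $\Sigma'$ properly isotopic to $\Sigma$ carrying a Legendrian arc with twisting larger than $-2n$; this is really the content of Lemma~\ref{lem:bound} together with the fact that extra torsion in the complement would be detectable. The delicate point is that the proper isotopy in condition (2) of the definition of $\mathcal{L}(\gamma)$ is only required to move $\Sigma'$ to $\Sigma$ through surfaces with transverse boundary, not within a fixed torsion block, so one must argue that the twisting of a Legendrian arc on such a surface is controlled by the Giroux torsion of the whole complement — this is where the analogy with Proposition~4.6 of \cite{HondaKazezMatic02} and convex surface theory (bypasses, the Legendrian realization principle, and nonexistence of bypasses on the pre-Lagrangian boundary torus of a maximally twisted block) does the work. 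Granting Lemmas~\ref{lem:bound} and~\ref{lem:compute} as already stated, the remaining task is just the bookkeeping of the framing contribution in the explicit model, which is routine; so the proof reduces to assembling these pieces and reading off both equalities.
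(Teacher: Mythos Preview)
Your approach is essentially the same as the paper's: construct an explicit Legendrian arc on the surface $\Sigma'=\Sigma\cup(\text{annulus in }T_n)$ realizing twisting $-2n$ to get $mt(\gamma)\geq -2n$, and invoke Lemma~\ref{lem:bound} together with the obvious lower bound $\tor\geq n$ to get $mt(\gamma)\leq -2n$. The only blemish is that you twice cite Lemma~\ref{lem:compute} itself as an input (first sentence and last paragraph), which is circular; presumably you meant only Lemma~\ref{lem:bound}, and with that correction the argument is the paper's.
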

Before proving Lemmas~\ref{lem:bound} and~\ref{lem:compute} we note that the previous two lemmas immediately yield our main lemma above.
\begin{proof}[Proof of Lemma~\ref{lem:torsioncompute}]
Lemmas~\ref{lem:bound} and~\ref{lem:compute} allow us to conclude that $\tor((\overline{C}, \eta_n),[T])\leq n$. But from construction we know $\tor((\overline{C}, \eta_n),[T])\geq n$. 
\end{proof}

We also observe that the proof of Lemma~\ref{lem:compute} is just as easy.
\begin{proof}[Proof of Lemma~\ref{lem:compute}]
As we know there is an embedding of $(T^2\times[0,1], \zeta_n)$ into $(\overline{C}, \overline{\xi}_n)$ we can use Lemma~\ref{lem:bound} to see that $mt(\gamma)\leq -2n$. But we can explicitly construct a curve $\gamma'$ on a surface $\Sigma'$ with $tw(\gamma',\Sigma')=-2n$. The surface $\Sigma'$ is constructed as at the end of the proof of Theorem~\ref{thm:infinite1}; that is, it is the union of a Seifert surface for $B$ and an annulus in $T_n$. We can Legendrian realize a curve on the Seifert surface for $B$  (note one must be careful at this point as the Seifert surface has transverse boundary) and using an explicit model for $T_n$ this curve can be extended across the annulus so that it reprsents the isotopy class of $\gamma$ and has $tw(\gamma',\Sigma')=-2n$.
\end{proof}

We now turn to the proof of Lemma~\ref{lem:bound}.
\begin{proof}[Proof of Lemma~\ref{lem:bound}]
Suppose we have a contact embedding of $\phi: (T^2\times[0,1], \zeta_k)\to (\overline{C}, \overline{\xi}_n)$ in the isotopy class of $[T]$. We need to show that $mt(\gamma)\leq -2k$. To this end take any $\gamma'\in \mathcal{L}(\gamma)$ and let $\Sigma'$ be the convex surface containing $\gamma'$. 

Notice that there are $2k$ tori in the image of $\phi$ whose characteristic foliations consist of leaves isotopic to meridians. On these tori a curve isotopic to $\partial \Sigma$ can be made transverse to the foliation. Half the time it will be positively transverse the other half it will be negatively transverse. It is clear that we can choose one of the tori $T$ where the curve is positively transverse and cobounds with $\partial \overline{C}$ a manifold $A$ such that $(A,(\overline{\xi}_n)|A)$ is contactomorphic to  $(T^2\times[0,1], \zeta_k)$. We now set $D=\overline{C}\setminus A$.

We glue two copies of $\overline{C}$ together by the diffeomorphism $\phi$ of $\partial\overline{C}$ that preserves the meridian (and its orientation) and reverses the orientation on the longitude. Denote the resulting manifold $P=\overline{C}\cup_\phi \overline{C}$. Since $\phi$ preserves the characteristic foliation we clearly have a contact structure $\eta_n$ induced on $P$ from $\overline{\xi}_n$ on $\overline{C}$, we note however that with our choice of gluing map we have $\overline{\xi}_n$ on one of the copies of $\overline{C}$ and $-\overline{\xi}_n$ on the other. Let $\widetilde{P}$ be the infinite cyclic cover of $P$ that unwinds the meridian of $\overline{C}\subset P$. If we denote the double of $\Sigma$ by $S$, then it is clear that $\widetilde{P}$ is $S\times\R$.
\begin{claim}
The pullback  $\widetilde{\eta}_n$ of $\eta_n$ is an $\R$-invariant tight contact structure and $S$ can be assume to be convex with dividing set $\Gamma_S$ containing at least $4k-1$ curves parallel to $\partial \Sigma\subset S$. 
\end{claim}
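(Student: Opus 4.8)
The plan is to analyze the $\mathbb{R}$-invariant structure on $\widetilde{P} \cong S \times \mathbb{R}$ and read off the dividing set on the convex surface $S$ by keeping careful track of the Giroux torsion collar $A$ that we inserted. First I would establish tightness: the contact structure $\eta_n$ on $P = \overline{C} \cup_\phi \overline{C}$ is built by gluing two universally tight pieces along a pre-Lagrangian incompressible torus (the image of $\partial \overline{C}$), so Colin's gluing theorem gives that $\eta_n$ is universally tight; in particular its pullback $\widetilde{\eta}_n$ to the infinite cyclic cover $\widetilde{P} = S \times \mathbb{R}$ is tight. The $\mathbb{R}$-invariance comes from the fact that the gluing map $\phi$ preserves the characteristic foliation on $\partial \overline C$ (so it is compatible with the mapping-torus/vertical structure), and the covering unwinds precisely the meridian direction; after a small isotopy near the gluing locus one arranges that the deck-translation-invariant contact structure is genuinely $\mathbb{R}$-invariant with respect to the product $S \times \mathbb{R}$, hence $S$ can be taken convex by Giroux's theory of $\mathbb{R}$-invariant contact structures.

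Next I would count the dividing curves on $S$ parallel to $\partial\Sigma$. The key point is that inside each copy of $\overline{C}$ sits a contactomorphic copy of $(T^2 \times [0,1], \zeta_k)$ — namely the region $A$ between the torus $T$ (on which $\partial \Sigma$ is positively transverse) and $\partial \overline{C}$ — which has Giroux torsion exactly $k$. When we double and pass to the cyclic cover, the two collars $A$ (one from each copy, carrying $\overline\xi_n$ and $-\overline\xi_n$) glue up along $\partial\overline{C}$ to give a torsion-$2k$ region $T^2 \times [0,1]$ in $P$; unwinding the meridian in the cover spreads the dividing curves of this region across an annulus in $S$ parallel to $\partial\Sigma$. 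A $T^2 \times [0,1]$ piece of Giroux torsion $m$, cut by a vertical annulus, contributes $2m$ dividing arcs (equivalently, in the doubled-up surface $S$ one sees $2m$ parallel closed dividing curves coming from the $2m$ "twists"); with $m = 2k$ this yields $4k$ curves parallel to $\partial\Sigma$. The stated bound of "at least $4k-1$" is the conservative version allowing for one curve possibly being absorbed into boundary-parallel behavior or lost at the seam of the gluing; I would simply produce the $4k$ twisting regions explicitly from the model $\zeta_k$ and note that at worst one is not clearly parallel to $\partial\Sigma$.

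The main obstacle I anticipate is making the passage "from Giroux torsion in a solid collar to a definite count of dividing curves on the convex surface $S$" rigorous, rather than heuristic. Concretely, one must: (i) verify that the annulus $\{p\} \times [0,1] \subset T^2 \times [0,1]$ in each torsion region can be taken to lie inside $\Sigma$ (doubled to $S$) after the proper isotopy built into the definition of $\mathcal{L}(\gamma)$ and the construction of $\overline\xi_n$; (ii) check that a convex vertical annulus meeting $(T^2\times[0,1], \zeta_m)$ has a dividing set with exactly $2m$ horizontal arcs — this is the standard computation (the dividing set of $\zeta_m$ restricted to $\{pt\}\times S^1 \times [0,1]$ has $2m$ components, since the contact planes make $m$ full rotations); and (iii) confirm these arcs survive in $\Gamma_S$ under the identifications, using Giroux's correspondence between dividing sets and the $\mathbb{R}$-invariant structure. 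Once the $\mathbb{R}$-invariant convex picture is set up, each of (i)–(iii) is routine, and the "$4k-1$" is deliberately stated with slack so that I need not chase the single possibly-ambiguous curve near the gluing torus.
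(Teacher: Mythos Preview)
Your argument has a genuine gap in the step where you claim $\mathbb{R}$-invariance. Tightness of $\widetilde{\eta}_n$ (which you could indeed get from Colin's theorem) is far weaker than the existence of a contact vector field transverse to all the fibers $S\times\{t\}$; the latter is what ``$\mathbb{R}$-invariant'' means, and it does not follow from any ``small isotopy near the gluing locus.'' The difficulty is that the decomposition $\overline{C}=A\cup D$ is taken with respect to an \emph{arbitrary} contact embedding of $(T^2\times[0,1],\zeta_k)$, so you have no a priori control over how the contact structure on $D$ sits relative to the surface fibration over $S^1$. In particular there is no reason the Reeb-like or fibered direction on $D$ should be a contact vector field for $\overline{\xi}_n|_D$.

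The paper supplies exactly this missing ingredient via Theorem~\ref{thm:maxclass} (quasi-compatibility). One Dehn fills $D$ along the meridian to obtain a closed manifold $D_c$ with a transverse knot $B'$ which is the binding of an open book with page $\Sigma$; a self-linking computation gives $\sl(B')=-\chi(\Sigma)$, and then Theorem~\ref{thm:maxclass} forces the contact structure to be quasi-compatible with this open book. Quasi-compatibility is precisely the statement that there is a contact vector field positively transverse to every page. This vector field is then extended across $A$, doubled to $P$, and lifted to $\widetilde{P}$, which is how the $\mathbb{R}$-invariance is actually produced. Your proposal bypasses this entirely, and without it the claim does not go through. (Your dividing-curve count, incidentally, is morally right but also depends on having the transverse contact vector field in hand: the $2k$ curves on each copy of $\Sigma$ come from the dividing set that $v$ induces on $\Sigma$, and the $4k-1$ arises because one of those curves is $\partial\Sigma$ itself, which is identified in the double.)
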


\noindent
Given this we can finish the proof of the lemma as follows. Let $c$ be the double of $\gamma'$ sitting on $S$. We clearly have that the twisting of the contact structure $\eta_n$ along $c$ measured with respect to $S$, which we denote by $t(c)$, is twice the twisting along of $\gamma'$. So if we can show $t(c)\leq -4k+1$ then the lemma will be established (notice that since $t(c)=2mt(\gamma)$ we know $t(c)$ is even so this inequlaity actually implies $t(c)\leq -4k$). We can lift $c$ to $\widetilde{P}$ and notice that its twisting is unchanged (since the covering map regularly covers a small neighborhood of $c$).

We recall that Giroux's ``semi-local Bennequin inequality'' says the following.
\begin{thm}[Giroux 2001, \cite{Giroux01}]
Let $\xi$ be an $\R$ invariant tight contact structure on $S\times \R$ where $S$ is a closed orientable surface of genus greater than zero. Let $\Gamma$ be the dividing set on $S\times \{0\}$ (which is clearly convex) and $C$ an essential simple closed curve in $S\times\{0\}$. Then for any Legendrian curve $L$ smoothly isotopic to $C$
\[
\text{tw}(L,\mathcal{F})\leq -\frac 12 (\Gamma\cdot C),
\]
where $\text{tw}(L,\mathcal{F})$ denotes the twisting of the contact planes along $L$ measured with respect to the framing $\mathcal{F}$  given by $S\times\{0\}$ and  $\Gamma\cdot C$ denotes the minimal geometric intersection between curves isotopic to $\Gamma$ and $C$.
\end{thm}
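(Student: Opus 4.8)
The plan is to reduce the inequality to a count of dividing arcs on a convex annulus joining $L$ to a curve on $S\times\{0\}$, and then to play tightness off against the minimality of the geometric intersection number $\Gamma\cdot C$. First I would fix a smooth representative $C_0$ of the isotopy class of $C$ sitting in $S\times\{0\}$ and in minimal position with respect to $\Gamma$, so that $|C_0\cap\Gamma|=\Gamma\cdot C$; if $C$ happens to be isotopic to a component of $\Gamma$ one instead perturbs $C_0$ to meet $\Gamma$ transversally in two points, and then the inequality one gets is strict, so this degenerate case is harmless (and when $S=S^2$ there are no essential curves, which is why the hypothesis $\mathrm{genus}(S)>0$ is needed to make the statement non-vacuous). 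Using the Legendrian realization principle on the convex surface $S\times\{0\}$ I would make $C_0$ Legendrian with $\mathrm{tw}(C_0,S\times\{0\})=-\tfrac12(\Gamma\cdot C)$. Next, since $L$ is smoothly isotopic to $C$, I would choose an embedded annulus $A$ with $\bdry A=L\cup C_0$, perturb it rel boundary so that it is convex with Legendrian boundary, and check that the framing $A$ induces on $L$ coincides with $\mathcal F$ (transport the $S\times\{0\}$ surface framing of $C_0$ along $A$). By the Kanda formula for a Legendrian curve on the boundary of a convex surface, $\mathrm{tw}(L,\mathcal F)=\mathrm{tw}(L,A)=-\tfrac12\,|L\cap\Gamma_A|$ while $\mathrm{tw}(C_0,S\times\{0\})=-\tfrac12\,|C_0\cap\Gamma_A|$, so everything is reduced to the combinatorial claim $|L\cap\Gamma_A|\ge|C_0\cap\Gamma_A|=\Gamma\cdot C$.

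For that claim I would analyze $\Gamma_A$. Tightness of the $\R$-invariant structure (the Giroux criterion) rules out contractible closed components; closed components parallel to the core of $A$ contribute nothing to either boundary count and can be ignored. Every properly embedded arc of $\Gamma_A$ with both endpoints on $C_0$ is boundary-parallel on $A$ and hence yields a bypass for $C_0$ lying on $A$. Attaching a non-trivial such bypass to $S\times\{0\}$ along $C_0$ would produce a Legendrian curve smoothly isotopic to $C$ but meeting the (isotoped) dividing set in fewer points than $\Gamma\cdot C$, contradicting minimality; a trivial such bypass corresponds to an arc that can be pushed off $C_0$ by an isotopy of $A$. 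Iterating, I would arrange that $\Gamma_A$ has no arc joining $C_0$ to itself, so that every one of the $|C_0\cap\Gamma_A|=|C_0\cap\Gamma|=\Gamma\cdot C$ endpoints on $C_0$ belongs to an arc running to $L$; this forces $|L\cap\Gamma_A|\ge\Gamma\cdot C$, and combining with the Kanda formula gives $\mathrm{tw}(L,\mathcal F)=-\tfrac12|L\cap\Gamma_A|\le-\tfrac12(\Gamma\cdot C)$, as desired. (Alternatively one can obtain $|L\cap\Gamma_A|\ge|C_0\cap\Gamma_A|$ in one shot from Honda's Imbalance Principle applied to $A$, again using minimality of $\Gamma\cdot C$ to forbid a destabilizing bypass along $C_0$.)

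The routine ingredients here — existence and convexification of $A$, the Legendrian realization of $C_0$, and the bookkeeping identifying the annulus framing with $\mathcal F$ — are standard convex surface theory. The main obstacle, and the only place the hypotheses are genuinely used, is the last step: one must be certain that a bypass extracted from $A$ along $C_0$ is actually destabilizing rather than trivial, so that it really does contradict the minimal position of $C_0$. This is exactly where $\R$-invariance and tightness of $\xi$ are indispensable — in an overtwisted or large-torsion setting a curve isotopic to $C$ can be locally ``knotted'' against the dividing set and the count fails — and handling it cleanly amounts to the trivial-versus-non-trivial bypass dichotomy combined with the fact that a minimal-position essential curve admits no further destabilization across $S\times\{0\}$.
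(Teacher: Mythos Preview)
The paper does not prove this theorem: it is quoted as a result of Giroux and used as a black box inside the proof of Lemma~\ref{lem:bound}. There is therefore no proof in the paper to compare your argument against.

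That said, your outline is the standard modern route to this inequality via convex surface theory (Legendrian-realize a minimal-position copy of $C$, build a convex annulus to $L$, and invoke the Imbalance Principle or the equivalent bypass count), and it is essentially correct. The one place that deserves more care is the step where a bypass for $C_0$ extracted from $A$ is claimed to contradict minimality: after attaching the bypass you obtain a new convex surface $S'$ isotopic to $S\times\{0\}$ on which the pushed-off copy of $C_0$ meets $\Gamma_{S'}$ in fewer points, and you need to know that $\Gamma_{S'}$ is isotopic to $\Gamma$ before this contradicts the minimality of $\Gamma\cdot C$. That is exactly what $\R$-invariance buys you---any convex surface isotopic to a fiber in an $\R$-invariant tight $S\times\R$ has dividing set isotopic to $\Gamma$---but it is the substantive content of the hypothesis and should be stated rather than absorbed into the phrase ``contradicting minimality.'' Your treatment of the degenerate case where $C$ is parallel to a component of $\Gamma$ is also slightly garbled (the bound $\mathrm{tw}\le 0$ is sharp there, realized by a Legendrian divide, so you should take $C_0$ to be that divide rather than a two-point perturbation), though this does not affect the main argument.
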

\noindent
Thus any curve $c'$ isotopic to $c$ satisfies $t(c')\leq -\frac 12 (\Gamma_S \cdot c')$, where $\Gamma_S \cdot c'$ denotes the minimal geometric intersection of a curve isotopic to $c'$ with $\Gamma_S$. Since $c'\cdot \partial \Sigma=2$  and $\Gamma_S$ contains $4k-1$ curves parallel to $\partial \Sigma$ it is clear that $\Gamma_S \cdot c'\geq 8k-2$ from which the lemma follows. 

We now establish the claim. Let $D_c$ be the manifold $D$ with the leaves in the characteristic foliation on $\partial D$ collapsed to points (topologically we are just Dehn filling along the meridional slope). The contact structure $(\overline{\xi}_n)|_D$ descends to give a contact structure $\eta$ on $D_c$ and the image of $\partial D$ in $D_c$ is a positive transverse curve $B'$.  One easily sees that $B'$ is the binding of an open book for $D_c$ and the page of this open book is diffeomorphic to $\Sigma$. We notice that $B'$ is the image of a positive transverse curve $\widetilde{B}'$ in $D\subset \overline{C}$. This curve cobounds an annulus in $A$ with a positive transverse curve $\widetilde{B}$ in $\partial \overline{C}$ that descends to the binding $B$ of the open book for $M$ when the characteristic foliation on $\overline{C}$ is collapsed to form $M$. We know from construction that $\sl(B)=-\chi(\Sigma)$. Thus one may easily conclude the same for $\widetilde{B}, \widetilde{B}'$ and $B'$. From Theorem~\ref{thm:maxclass} we now see that $\eta$ is quasi-compatible with $B'$. Thus there is a contact vector field $v$ on $D_c-B'$ that is transverse to the pages of the open book. One may easily check (by considering a local model for $B'$) that $v$ may be assumed to be meridional (that is, its flow lines are meridians) inside a neighborhood of $B'$ but outside a smaller neighborhood of $B'$. Thus we may alter $v$ on $D_c-B'=D-\partial D$ so that in a neighborhood of $B'$ minus $B'$ the orbits of $v$ are meridians, and hence we can then easily be  extended over all of $D$. Moreover, we can find a contact vector field on $A$ that also has meridional flow lines and then use it to extend $v$ to a contact vector field on all of $\overline{C}$ that is transverse to $\Sigma$ (and all the pages of the open book). It is clear that the dividing set on $\Sigma$ induced by $v$ contains at least $2k$ closed curves parallel to $\partial \Sigma$, one of them being $\partial \Sigma$. The manifold $P$ is obtained by gluing together two copies of $\overline{C}$. Observing that the gluing map preserves $v$ we can get a contact vector field on $(P,\eta_n)$ that is transverse to $S$ (and all the fibers in the fibration of $P$ over $S^1$). Thus on the $\Z$-cover $\widetilde{P}=S\times \R$ of $P$ we can lift $v$ to a vector field $\widetilde{v}$ that preserves $\widetilde{\eta}_n$ and is transverse to $S\times\{t\}$ for all $t\in \R$. We can use the flow of $\widetilde{v}$ to identify $\widetilde{P}$ with $S\times \R$ so that the contact structure is $\R$-invariant. From our observation about the dividing set on $\Sigma$ we see that $S$ has at least $4k-1$ dividing curves parallel to $\partial \Sigma$. (The minus one comes from the fact that one of the dividing curves on $\Sigma$ was $\partial \Sigma$ so when the two copies of $\Sigma$ are glued together, two of the dividing curves are identified.)
\end{proof}

\begin{proof}[Proof of Theorem~\ref{GeneralInfinite}]
Let $K$ be a null-homologous knot type in the irreducible manifold $M$, with a Seifert surface $\Sigma$ of genus $g>0$, where $g$ is the genus of $K$. Set $C=\overline{M-N}$ where $N$ is a solid torus neighborhood of $K$. Let $\Gamma$ be the union of two embedded meridional curves on $\partial C$. It is easy to check that $(C,\Gamma)$ is a taut sutured manifold and thus according to Theorem~1.1 in \cite{HondaKazezMatic02} there is a universally tight contact structure $\xi$ on $C$ for which $\partial C$ is convex with dividing curves $\Gamma$. Moreover, in the proof of Theorem~1.1 in \cite{HondaKazezMatic02} we see that one may assume that $\partial \Sigma$ is Legendrian and $\Sigma$ is convex with a single dividing curve that is boundary parallel. 

Now let $T=T^2\times [0,\infty)$ with the contact structure $\xi_T=\ker (\cos z \, dx + \sin z\, dy)$. Using the convex version of Colin's gluing criterion, Theorem 5.7 in \cite{HondaKazezMatic02} we see that we can glue $(C,\xi)$ and either $(T,\xi_T)$ or $(T,-\xi_T)$ together to get a universally tight contact structure $\xi$ on a manifold $C'$ diffeomorphic to $C$. 

There is a sequence of disjoint tori $T_n, n\in \N$, in $T\subset C'$ that have linear characteristic foliation each leaf of which is a meridional curve in $C'$. Moreover we can arrange that $T_n$ and $T_{n+1}$ cobound a $T^2\times[0,1]$ with Giroux torsion 1.  Let $C_n$ be the compact component of $C'\setminus T_n$. Notice that if each leaf of the characteristic foliation of $\partial C_n=T_n$ is collapsed to a point (that is, topologically we Dehn fill $C_n$) we get a manifold diffeomorphic to $M$ (and this diffeomorphism is canonical up to isotopy). Moreover there is a neighborhood $U$ of $T_n$ in $C_n$  and a neighborhood $V$ of $K=S^1\times\{(0,0)\}$ in $S^1\times D^2$, with contact structure $d\phi+r^2\, d\theta$, such that $U-T_n$ and $V-K$ are contactomorphic. The collapsing process going from $C_n$ to $M$ can be thought of as removing $U$ and replacing it with $V$. Thus we see that $C_n$ induces a contact structure $\xi_n$ on $M$ and in that contact structure there is a knot $K_n$ such that $M-K_n$ is contactomorphic to $C_n-T_n$. Notice that all of the $\xi_n$ are overtwisted and $\xi_n$ is obtained from $\xi_{n-1}$ by a full Lutz twist on $K_n$. Thus by Eliashberg's classification of overtwisted contact structures we know all the $\xi_n$ are isotopic to a fixed overtwisted contact structure which we denote $\eta$. As above each $K_n$ gives a transverse knot, still denoted $K_n$, in $\eta$. Each $K_n$ is clearly non-loose. Moreover the surface $\Sigma$ above can be extended by an annulus in $T$ so that in $M$ it gives a Seifert surface for $K_n$. It is clear from construction that $\Sigma$ is convex with dividing curves parallel to the boundary. One can choose the $T_n$ so that $\Sigma$ will always have an even number of dividing curves or an odd number of dividing curves. If we choose the former then it is clear that all the $K_n$ have self-linking number $-\chi(\Sigma)=-\chi(K)$. If we choose the latter then all the $K_n$ have $\sl(K_n)=\chi(K)$. Also note that going between the former and the latter amounts to doing a half-Luts twist on the knots $K_n$. Thus we see that the contact structure where the $K_n$ have self-linking number $-\chi(K)$ and the contact structure where they have self-linking number $\chi(K)$ differ by a half-Lutz twist and hence are not contactomorphic. 

Thus we will be done with the theorem once we see that all the $K_n$ are non contactomorphic. If we perturb the boundary of $C_n$ so that it is convex with two dividing curve then the resulting contact structures are the same as the ones constructed in Proposition~4.2 of \cite{HondaKazezMatic02}. In Proposition~4.6 of that paper  it is also shown that all these contact structures are not contactomorphic.  Now if there was a contactomorphism of $(M,\eta)$ taking $K_n$ to $K_m$ then $C_n-T_n$ would be contactomorphic to $C_m-T_m$ Denote the contactomorphism by $\phi$.  Let $B$ be a torus in $C_n-T_n$ with linear characteristic foliation of slope 0 (that is the leaves are null-homologous in $C_n$). Let $(C_n-T_n)\setminus B=P\cup Q$ with $Q$ the non-compact component. We can assume $B$ was chosen so that $Q$ is minimally twisting (that is there are no convex tori with negative slope in $Q$). The torus $\phi(B)$ breaks $C_m-T_m$ into two similar such pieces $P'$ and $Q'$ and $\phi$ gives a contactomorphism from $P$ to $P'$. By adding the appropriate basic slice to both $P$ and $P'$ we can extend $\phi$ to a contactomorphism of the contact structures constructed in Proposition~4.2 of \cite{HondaKazezMatic02}. This would contradict Proposition~4.6 of that paper unless $m=n$.
\end{proof}

We now turn to the classification of transverse knots with maximal self-linking in the hyperbolic knot type of the binding of an open book supporting the given contact structure. 

\begin{proof}[Proof of Theorem~\ref{thm:tclassify}]
We are given an open book decomposition $(B,\pi)$ with connected binding. Since $B$ is a hyperbolic knot the monodromy $\phi$ of the open book is pseudo-Anosov. Let $\xi$ be the contact structure obtained from $\xi_B$ by performing a full lutz twist on $B$. Clearly $\xi$ is overtwisted and in the same homotopy class of plane field as $\xi_B$, so if $\xi_B$ is overtwisted then $\xi$ and $\xi_B$ are isotopic contact structures. 

We are trying to determine the set $\mathcal{T}_{-\chi(B)}(B)$. Since any knot type has a loose knot with any odd self-linking number there is clearly a loose knot $K_*$ in $\mathcal{T}_{-\chi(B)}(B)$. Moreover, Theorem~\ref{thm:transverseloose} says that this is the unique loose knot with given knot type and self-linking number so $K_*$ is the only loose knot in $\mathcal{T}_{-\chi(B)}(B)$. Since loose and non-loose knots are not contactomorphic we are left to classify the non-loose knots in $\mathcal{T}_{-\chi(B)}(B)$.

If $K$ is a non-loose knot in $\mathcal{T}_{-\chi(B)}(B)$ then Theorem~\ref{thm:maxclass}
implies that $\xi$ is obtained from $\xi_B$ by adding Giroux torsion along tori which are incompressible in the complement of $B$. Since the monodromy $\phi$ is pseudo-Anosov the only such tori are isotopic to the boundary of a neighborhood of $B$. Adding Giroux torsion along this torus is equivalent to performing some number of full Lutz twists along $B$ in $\xi_B$. Thus $K$ is clearly one of the $B_n$ constructed in the proof of Theorem~\ref{thm:infinite1}. Notice that $B_0$, the knot obtained from $B$ by doing no Lutz twists, is a transverse knot in $\xi$ if and only if $\xi_B$ is overtwisted to begin with. Thus we see that any non-loose knot in $\mathcal{T}_{-\chi(B)}(B)$ is contactomorphic to one of the knots $\{B_n\}_{n\in A}$ where $A$ is the indexing set in the statement of the theorem. Finally Lemma~\ref{lem:torsioncompute} guarantees that $B_n$ and $B_m$ are not contactomorphic if $n\not=m$ since they have different contact structures on their complement.

Renaming $B_n$ to $K_n$,  we have now classified the transverse knots in $\mathcal{T}_{-\chi(B)}(B)$ and  established the first bullet point in the theorem. The second bullet point follows from \cite{Vela-Vick09} since all the knots except $K_0$ have either an overtwisted disk or Giroux torsion in their complement. 

To establish the last bullet point in the theorem we notice that each $K_n$ for $n>0$ has a neighborhood contactomorphic to $S^1\times D^2=\{(\phi, r, \theta): r\leq 2\pi+\epsilon\}$ for some $\epsilon$, with the contact structure $\ker (\cos r \, d\phi + r\sin r\, d\theta)$. Thus we can choose a convex torus $T$ outside of the solid torus $\{r\leq 2\pi\}$ with two dividing curves inducing the framing $-m$ for some $m$ (where we use the page of the open books to define the 0 framing). Thus we can find a neighborhood $N$ of $K_n$ that breaks into two pieces $A=T^2\times[0,1]$ and $N'=S^1\times D^2$, with the following properties. The contact structure on $A$ is not invariant in the $[0,1]$ direction and the boundary of $A$ is convex with each boundary component having two dividing curves inducing the framing $-m$. The contact structure on the solid torus $N'$ is minimally twisting and the boundary of $N'$ is convex with two dividing curves inducing the framing $-m$. From \cite{EtnyreHonda01b, Honda00a}, we know $N'$ is the standard neighborhood of a Legendrian curve $L$ whose positive (if $L$ is oriented in the same way that $K_n$ is oriented) transverse push-off is $K_n$. If we stabilize $L$ positively then its positive transverse push-off is a transversely isotopic to the stabilization of $K_n$, see \cite{EtnyreHonda01b}. Let $N''$ be a neighborhood of the stabilized Legendrian inside of $N'$. From \cite{EtnyreHonda01b} we know that we can write $N'$ as the union of $A'=T^2\times[0,1]$ and $N''$ where $A'$ is a basic slice. The sign of this basic slice depends on the stabilization of $L$ that we perform. One may easily check that all the basic slices in any decomposition of $A$ into basic slices all have the same sign and that sign is opposite the one associated to $A'$ if we positively stabilize $L$. Thus the contact structure on $A\cup A'$ is a non-minimally twisting contact structure made from basic slices with different signs. Such a contact structure must be overtwisted, \cite{Honda00a}. Since the transverse push-off of the stabilized $L$ is contained in $N''$ we see that the complement of this transverse knots is overtwisted since it contains $A\cup A'$. Thus the complement of a stabilization of $K_n$ is overtwisted and we see that a stabilization of $K_n$, for $n>0$, is loose. Theorem~\ref{thm:transverseloose} then guarantees all the stabilizations of the $K_n, n>0$ are contactomorphic to the stabilization of $K_*$. We note that it is not clear if the stabilizations of $K_0$, if it exists, are loose or not. 

Finally, If $\xi'$ is some overtwisted contact structure and $T$ is a transverse knot in $\mathcal{T}_{-\chi(B)}(B)$ that is non-loose then Theorem~\ref{thm:maxclass} implies that $\xi'$ is obtained from $\xi_B$ by adding some number of full Lutz twists along $B$ (notice that the only incompressible tori in the complement of $B$ are boundary parallel tori). Thus if $\xi'$ is not so obtained then $\mathcal{T}_{\chi(B)}(B)=\{K_*\}$.
\end{proof}

\section{Non-loose Legendrian knots}
Using well know facts concerning the relation between Legendrian and transverse knots, mostly reviewed in Subsection~\ref{revLegTrans} above, we can upgrade the coarse classification of transverse knots from Theorem~\ref{thm:tclassify} to Legendrian knots.

\begin{proof}[Proof of Theorem~\ref{thm:lclassify}]
By Theorem~\ref{thm:tclassify} we know that $\mathcal{T}_{-\chi(B)}(B) = \{K_*\}\cup \{K_i\}_{i\in A}$, where $A=\N$ if $\xi_B$ is tight and $A=\N\cup\{0\}$ if not. For any $i\in\N$ the transverse knot $K_i$ was the core of a full Lutz twist. Thus it has a neighborhood $N_i$ that is contactomorphic to $S^1\times D^2_{2\pi}$, with the contact structure $\ker (\cos r\, d\phi + r\sin r\, d\theta)$, where $D^2_a$ is the disk of radius $a$. There is an infinite sequence of radii $r_j, j$ an integer, such that $S^1\times D^2_{r_j}$ has a linear characteristic foliation of slope $\frac{1}{j}$. Let $L_{j,i}$ be a leaf in this foliation. Lemma~\ref{legapprox} says that $K_i$ is the transverse push-off of $L_{j,i}$. Since the contact framing of $L_{j,i}$ is the same as the framing coming form the torus $\partial (S^1\times D^2_{r_j})$ it sits on, we see that $\tb(L_{j,i})=j$ and hence $\rot(L_{j,i})=\chi(B)+j$. If $\xi_B$ is overtwisted then we can similarly construct Legendrian approximations $L_{i,0}$ of $K_0$ for all $i$ less than some $m$, where $m$ is either an integer or $\infty$. It is clear from Proposition~\ref{nltnll} that all the $L_{i,j}$ constructed here are non-loose. Moreover, they are all distinct as they either have different transverse push-offs or they have different Thurston-Bennequin invariants. 

All the bullets points in the theorem, except the last, follows from the corresponding statements for the transverse knots $K_i$. The last bullet point follows from Lemma~\ref{legapprox}.

Since there is a unique loose Legendrian knot with given invariants we are left to show that, up to contactomorphism, any non-loose Legendrian knot in $\mathcal{L}_{\chi(B)+n,n}(B)$ is one of the ones constructed above.

Let $L$ be a non-loose Legendrian knot in $\mathcal{L}_{\chi(B)+n,n}(B)$. We begin by assuming that $\tb(L)=-k\leq 0$ and hence $\rot(L)= \chi(B)-k$. Let $N$ be a standard neighborhood of $L$ with convex boundary. We assume the characteristic foliation on $\partial N$ has two closed leaves and all leaves are transverse to a ruling of $\partial N$ by longitudes. Let $\Sigma$ be a fiber in the fibration of $M\setminus N$. From our set up $\partial \Sigma$ is a transverse curve $T\subset \partial N$. Moreover it is easy to see that $T$ is the transverse push-off of $L$ and $\sl(T)=-\chi(B)=-\chi(\Sigma)$. 

By Lemma~3.3 in \cite{EtnyreVanHornMorris10} we can isotope $\Sigma$ so that it has a Morse-Smale characteristic foliation and no negative singular points. Thus $\Sigma$ is convex and dividing curves are disjoint from $\partial \Sigma$. From the proof of Lemma~3.4 in \cite{EtnyreVanHornMorris10} we see that $\Sigma$ may be isotoped so that the dividing curves $\Gamma_\Sigma$ on $\Sigma$ are invariant under the monodromy of the open book. As the monodromy is pseudo-Anosov the only dividing curves $\Sigma$ can have are ones parallel to $\partial \Sigma$. Assume that $\Sigma$ has been isotoped (keeping $\partial \Sigma$ transverse and contained in $\partial N$) to minimize the number of dividing curves. 

We show that the contact structure on the complement of $N$ is determined by the dividing curves on $\Sigma$. 
Taking a neighborhood $N'$ of $N\cup \Sigma$ and rounding corners we see that $\partial N'$ is obtained by gluing together two copies of $\Sigma$, denoted $\Sigma_1$ and $\Sigma_2$,  and an annulus $A$. We can assume that $\partial N'$ is convex and that its dividing curves consist of a copy of $\Gamma_\Sigma$ on each $\Sigma_i$ and one curve in the center of $A$, see \cite{EtnyreVanHornMorris10}. Since $\Sigma$ is a page of an open book for $M$ we see that $M\setminus N'$ is a handlebody. Moreover, applying Lemma~3.5 in \cite{EtnyreVanHornMorris10} we see that the minimality of the number of components of $\Gamma_\Sigma$ implies that the dividing curves on compressing disks are uniquely determined by $\Gamma_\Sigma$. Thus the contact structure on $M\setminus N'$ is determined by the number of curves in $\Gamma_\Sigma$.  Moreover, since the difference between $M\setminus N'$ and $M\setminus N$ is a neighborhood of $\Sigma$, a similar argument says that $M\setminus N$ is determined by the number of curves in $\Gamma_\Sigma$. One may easily check that $L_{-k,n}$ has a  convex surface with the same configuration of dividing curves in the complement of a standard neighborhood, where $2n$ is the number of components in $\Gamma_\Sigma$. Thus $L$ is contactomorphic to $L_{-k,n}$.

If $\tb(L)=k>0$, and hence $\rot(L)=\chi(B)+k$, then we can proceed as above except now the ruling longitudinal curves on $\partial N$, oriented in the same direction as $L$, are negatively transverse curves. Thus $\partial \Sigma$ is a negatively transverse curve $T$ on $\partial N$. We can identify $N$ as a neighborhood of a transverse curve $T'$ that is a positive transverse push-off of $L$, and hence has $\sl(T')=-\chi(B)$. The curves $T$ and $T'$ cobound an annulus $A$ whose characteristic foliation as a single closed leaf $L'$. The Legendrian $L'$ is clearly topologically isotopic to $L$ and has Thurston-Bennequin number 0. Moreover, $T'$ is its positive transverse push-off and $T$ its negative push-off. From the first fact we see $\rot(L')=\chi(B)$ and from the second fact we see $\sl(T)=\chi(B)$. 

Notice that if we orient $T$ in the same direction as $B$ and $\Sigma$ so that it has oriented boundary $T$, then the characteristic foliation on $\Sigma$ points in along $T$ (because $T$ will be a negatively transverse to the contact structure). By the proof of Lemma~3.3 in \cite{EtnyreVanHornMorris10} we can isotope $\Sigma$ so that it has a Morse-Smale characteristic foliation and no positive singular points. We can now argue as above to conclude that $L$ is contactomorphic to $L_{k,n}$ for some $n$.
\end{proof}

\begin{proof}[Proof of Theorem~\ref{thm:linvertableclassify}]
The hypothesis of the theorem says that $L$ and $-L$ are in the same knot type, up to diffeomorphism. Thus if $L$ is in $\mathcal{L}_{\chi(B)+n, n}(B)$ then $-L$ is in $\mathcal{L}_{-\chi(B)-n, n}(B)$, so the result follows from Theorem~\ref{thm:lclassify}. Moreover, all the knots claimed to be in $\mathcal{L}_{-\chi(B)-n,n}(B)$  can be obtained knots in $\mathcal{L}_{\chi(B)+n, n}(B)$ by reversing the orientation on the knot.  
\end{proof}

Finally we establishes our result about stabilizing Legendrian knots with large Thurston-Bennequin invariant. 
\begin{proof}[Proof of Proposition~\ref{prop:stabilize}]
We  assume that $-B$ is not isotopic to $B$. This condition implies there is no diffeomorphism taking $B$ to $-B$ that is isotopic to the identity. Thus a contactomorphism that is smoothly isotopic to the identity and takes a Legendrian knot $L$ in $\mathcal{L}(B)$ to itself must preserve the orientation on $L$ and hence on any surface with boundary on $L$. So we can conclude if the contactomorphism is also co-orientation preserving then it preserves the sense of a stabilization. By this we mean that if $L=S_+(L')$ and $\phi$ is such a contactomorphism of $\xi$ then $\phi(L)=S_+(\phi(L'))$.  

Now suppose that $L\in\mathcal{L}(B)$ and $\tb(L)- \rot(L)> -\chi(B)$. Set $n=\frac 12(-\chi(B)-\tb(L))$. Notice that $\tb(S_+^n(L))-\rot(S_+^n(L))=-\chi(B)$. Thus by Theorem~\ref{thm:lclassify} we know there is a contactomorphism $\phi$ sending $S_+^n(L)$ to one of the Legendrian knots described in the theorem (one may also check that $\phi$ preserves the co-orientation of the contact structure), and from the observation above $S_+^n\phi(L)$ is isotopic to one of the knots described in the theorem. If it is isotopic to any knot other than $L_*$, that is the loose knot, then any number of negative stabilization will stay non-loose. It is clear, however, that there is some $m$ such that $S_-^m(\phi(L))$ cannot be non-loose since it does not satisfy $-|\tb| +|\rot|\leq -\chi(B)$. Thus $S_-^m(S_+^n(\phi(L)))$ cannot be non-loose resulting in a contradiction unless $S_+^n(\phi(L))$ is already loose, which of course implies $S_+^n(L)$ is loose.  
\end{proof}

\def\cprime{$'$} \def\cprime{$'$}

\end{document}